\numberwithin{equation}{section}
\numberwithin{figure}{section}
\theoremstyle{plain}
\newtheorem{thm}{Theorem}[section]
\newtheorem{lem}[thm]{Lemma}
\newtheorem{prop}[thm]{Proposition}
\theoremstyle{definition}
\theoremstyle{remark}
\newcommand{\M}{\operatorname{M}}
\newcommand{\Q}{\operatorname{Q}}
\newcommand{\wt}{\operatorname{wt}}
\newcommand{\Pf}{\operatorname{Pf}}
\newcommand{\GF}{\operatorname{GF}}
\newcommand{\sgn}{\operatorname{sgn}}
\begin{document}
\setlength{\abovedisplayskip}{10pt}
\setlength{\belowdisplayskip}{10pt}

\title[A reflection principle for nonintersecting paths]{A reflection principle for nonintersecting paths and lozenge tilings with free boundaries}

\author{Seok Hyun Byun}

\address{Department of Mathematics, Amherst College}
\email{sbyun@amherst.edu}

\begin{abstract}
Okada and Stembridge's Pfaffian formula for the enumeration of families of nonintersecting paths with fixed starting points and unfixed ending points has been widely used to resolve many challenging problems in enumerative combinatorics. In this paper, we present a new formula that complements Okada and Stembridge's Pfaffian formula. The proof is based on a formula for the square of the sum of maximum minors of matrices obtained from Okada's formula. The combinatorial interpretation of the new formula gives a reflection principle for nonintersecting paths. It implies that the enumeration of families of nonintersecting paths with unfixed ending points can be resolved by enumerating families of nonintersecting paths with fixed ending points instead. Using this formula, we also show that the enumeration of lozenge tilings of a large family of regions with free boundaries can be deduced from those without free boundaries. We then provide several applications of this result, including 1) a new family of regions whose tiling generating function is given by a simple product formula, 2) a simpler proof of a factorization theorem for lozenge tilings of hexagons with holes, and 3) new determinant formulas for the volume generating functions of shifted plane partitions of a shifted shape and symmetric plane partitions of a symmetric shape.
\end{abstract}

\maketitle

\section{Introduction}\label{section1}

Enumeration of families of nonintersecting paths is one of the important topics in enumerative combinatorics, as nonintersecting paths are in bijection with many other combinatorial objects. Among other techniques, the classical theorems of Lindstr\"{o}m--Gessel--Viennot (see \cite{L} and \cite{GV1,GV2}) and Okada--Stembridge (see \cite{O1} and \cite{Ste}) have successfully answered many enumeration problems. We recall the statement of these important theorems.

Let $G$ be a locally finite and acyclic directed graph. Assume that for each edge $e$ of $G$, we assign weight $\wt(e)$ from some commutative ring (usually, it is $\mathbb{R}$ or a power series ring). We also assign weight $1$ to a degenerate edge whose two adjacent vertices are the same. For any two vertices $u$ and $v$, consider a directed path\footnote{In this paper, all paths are directed. Thus, we simply call them \textit{paths} throughout the paper.} $P$ directed from $u$ to $v$. The weight of the path $P$, $\wt(P)$, is the product of the weights of all edges that constitute the path $P$. Given an $m$-tuple of paths $(P_1,\ldots,P_m)$, the weight of the $m$-tuple of paths is the product of weight of the $m$ paths, i.e. $\prod_{i=1}^{m}\wt(P_i)$. We say that the $m$-tuple of paths $(P_1,\ldots,P_m)$ is \textit{nonintersecting} if $P_i$ and $P_j$ do not intersect for all $i\neq j$. For two vertices $u$ and $v$ of $G$, let $\mathscr{P}(u,v)$ be the set of all directed paths from $u$ to $v$ and $\GF[\mathscr{P}(u,v)]$ be the sum of weight of all the paths $P\in\mathscr{P}(u,v)$. In particular, when $u=v$, $\mathscr{P}(u,u)$ consists of a single path of length zero whose weight is one (thus, $\GF[\mathscr{P}(u,u)]=1$). For positive integers $m$ and $n$ such that $m\leq n$, consider an $m$-tuple of vertices $\mathbf{u}=(u_1,\ldots,u_m)$ and an $n$-tuple of vertices $\mathbf{v}=(v_1,\ldots,v_n)$ of $G$. We say that $\mathbf{u}$ and $\mathbf{v}$ are \textit{compatible} if for every\footnote{For positive integers $k$, $[k]\coloneqq\{1,\ldots,k\}$.} $i,j\in[m]$ and $k,l\in[n]$ such that $i<j$ and $k<l$, every path $P\in\mathscr{P}(u_i,v_l)$ intersects with every path $Q\in\mathscr{P}(u_j,v_k)$. For any permutation $\sigma\in S_{m}$, let $\mathscr{P}_0(\mathbf{u}_{\sigma}, \mathbf{v})$ be the set of all $m$-tuples of nonintersecting paths $(P_1,\ldots,P_m)$ such that $P_k\in\mathscr{P}(u_{\sigma(k)}, v_{j_k})$ for some indices $j_1,\ldots,j_m$ satisfying $j_1<\cdots<j_m$. Note that if $\mathbf{u}$ and $\mathbf{v}$ are compatible, then $\mathscr{P}_0(\mathbf{u}_{\sigma}, \mathbf{v})$ is empty unless $\sigma$ is an identity permutation. When $\sigma$ is an identity permutation, we simply denote $\mathscr{P}_0(\mathbf{u}_{\sigma}, \mathbf{v})$ by $\mathscr{P}_0(\mathbf{u}, \mathbf{v})$ rather than $\mathscr{P}_0(\mathbf{u}_{id}, \mathbf{v})$ . The sum of weight of all $m$-tuples of nonintersecting paths in $\mathscr{P}_0(\mathbf{u}_{\sigma}, \mathbf{v})$ is denoted by $\GF[\mathscr{P}_0(\mathbf{u}_{\sigma}, \mathbf{v})]$. Lastly, let $\M(\mathbf{u,\mathbf{v}})$ be the $m\times n$ matrix defined by
\begin{equation}\label{eaa}
    \M(\mathbf{u},\mathbf{v})=\Big[\GF[\mathscr{P}(u_i,v_j)]\Big]_{1\leq i\leq m, 1\leq u\leq n}
\end{equation}
and we call this matrix the \textit{path matrix from $\mathbf{u}$ to $\mathbf{v}$}. The pictures that illustrate the cases when $m=n$ and $m<n$ are given in Figure \ref{faa}.

To state Lindstr\"{o}m--Gessel--Viennot theorem (see \cite{L} and \cite{GV1,GV2}), we assume $n=m$. Under this assumption,
\begin{itemize}
    \item $\mathscr{P}_0(\mathbf{u}_{\sigma}, \mathbf{v})$ is the set of all $m$-tuples of nonintersecting paths $(P_1,\ldots,P_m)$ such that $P_k\in\mathscr{P}(u_{\sigma(k)}, v_{k})$ and
    \item $\M(\mathbf{u},\mathbf{v})$ becomes an $m\times m$ square matrix.
\end{itemize}

\begin{thm}[Lindstr\"{o}m, Gessel and Viennot]\label{taa}
    Let $G$ be a locally finite and acyclic directed graph and $\mathbf{u}=(u_1,\ldots,u_m)$, $\mathbf{v}=(v_1,\ldots,v_m)$ are two $m$-tuple of vertices on $G$. Then,
    \begin{equation}\label{eab}
        \sum_{\sigma\in S_m}\sgn(\sigma)\GF[\mathscr{P}_{0}(\mathbf{u}_{\sigma},\mathbf{v})]=\det \big[\M(\mathbf{u},\mathbf{v})\big].
    \end{equation}
 
    In particular, if $\mathbf{u}$ and $\mathbf{v}$ are compatible, then
    \begin{equation}\label{eac}
        \GF[\mathscr{P}_{0}(\mathbf{u},\mathbf{v})]=\det \big[\M(\mathbf{u},\mathbf{v})\big].
    \end{equation}
\end{thm}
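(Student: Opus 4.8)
The plan is to prove \eqref{eab} by expanding the right-hand determinant with the Leibniz formula and then cancelling the ``intersecting'' contributions by a weight-preserving, sign-reversing involution, following the classical Lindstr\"{o}m argument. The corollary \eqref{eac} will then follow immediately from the compatibility hypothesis.

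First I would expand
\[
\det\big[\M(\mathbf{u},\mathbf{v})\big]=\sum_{\pi\in S_m}\sgn(\pi)\prod_{i=1}^{m}\GF[\mathscr{P}(u_i,v_{\pi(i)})].
\]
Since $\GF[\mathscr{P}(u_i,v_{\pi(i)})]$ is the sum of $\wt(P)$ over all paths $P$ from $u_i$ to $v_{\pi(i)}$, and weight is multiplicative, the product is the generating function of all $m$-tuples $(P_1,\dots,P_m)$ with $P_i\in\mathscr{P}(u_i,v_{\pi(i)})$, \emph{whether or not they are nonintersecting}. I would then split the resulting signed sum into the tuples that are nonintersecting and those that are not. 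For a nonintersecting tuple with $P_i\in\mathscr{P}(u_i,v_{\pi(i)})$, relabelling the paths by their terminal vertex (so that $Q_k\coloneqq P_{\pi^{-1}(k)}\in\mathscr{P}(u_{\pi^{-1}(k)},v_k)$) shows that it contributes to $\GF[\mathscr{P}_0(\mathbf{u}_\sigma,\mathbf{v})]$ with $\sigma=\pi^{-1}$; since $\sgn(\pi)=\sgn(\sigma)$ and the weight is unchanged by relabelling, the nonintersecting part of the expansion equals $\sum_{\sigma\in S_m}\sgn(\sigma)\GF[\mathscr{P}_0(\mathbf{u}_\sigma,\mathbf{v})]$, the left-hand side of \eqref{eab}. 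It then remains to show that the intersecting tuples contribute zero.

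To kill the intersecting contributions I would construct an involution $\Phi$ on the set of intersecting tuples. Given such a tuple, let $i$ be the least index for which $P_i$ meets another path, let $w$ be the first vertex along $P_i$ (reading from $u_i$) that also lies on some other path, and let $j\neq i$ be the least index with $w\in P_j$. Define $\Phi$ by swapping the tails after $w$:
\[
P_i'=(P_i\colon u_i\to w)\cdot(P_j\colon w\to v_{\pi(j)}),\qquad P_j'=(P_j\colon u_j\to w)\cdot(P_i\colon w\to v_{\pi(i)}),
\]
leaving the other paths fixed. This replaces $\pi$ by $\pi\cdot(i\,j)$, hence reverses $\sgn(\pi)$, while preserving the multiset of edges used and therefore the total weight; and since $P_i'$ and $P_j'$ still share $w$, the image is again intersecting.

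The step I expect to be the main obstacle is verifying that $\Phi$ is genuinely an involution, i.e. that applying it to $\Phi(P_1,\dots,P_m)$ recovers the original tuple with the \emph{same} triple $(i,w,j)$. The key observations are that $P_i'\cup P_j'=P_i\cup P_j$ as vertex sets, so no path of smaller index acquires or loses an intersection and the least intersecting index stays $i$; that the initial segment of $P_i'$ up to $w$ coincides with that of $P_i$ and (because in an acyclic graph a path has no repeated vertices) meets no other path, so $w$ remains the first shared vertex on $P_i'$; and that the set of indices $\ell\neq i$ with $w\in P_\ell'$ is unchanged, so the least such index is again $j$. Granting these, $\Phi$ pairs up intersecting tuples of opposite sign and equal weight, so their contributions cancel and \eqref{eab} follows. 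Finally, for \eqref{eac}: when $\mathbf{u}$ and $\mathbf{v}$ are compatible, $\mathscr{P}_0(\mathbf{u}_\sigma,\mathbf{v})$ is empty unless $\sigma$ is the identity, so the left-hand side of \eqref{eab} collapses to the single term $\GF[\mathscr{P}_0(\mathbf{u},\mathbf{v})]$.
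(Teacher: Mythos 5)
The paper does not prove Theorem \ref{taa} at all: it is recalled as a classical result with references to Lindstr\"{o}m and Gessel--Viennot, and the rest of the paper simply uses it as a black box. Your proposal supplies the standard proof of exactly that classical theorem --- Leibniz expansion of $\det\big[\M(\mathbf{u},\mathbf{v})\big]$, identification of the nonintersecting contributions with $\sum_{\sigma}\sgn(\sigma)\GF[\mathscr{P}_0(\mathbf{u}_\sigma,\mathbf{v})]$ via $\sigma=\pi^{-1}$, and cancellation of the intersecting tuples by the sign-reversing, weight-preserving tail-swap involution --- and it is correct. Your choice of canonical data $(i,w,j)$ (least intersecting index $i$, \emph{first} shared vertex $w$ along $P_i$, then least $j$ with $w\in P_j$) is the one that actually makes $\Phi$ an involution, and your verification correctly isolates the two points where the argument could fail (that $w$ stays the first shared vertex of $P_i'$, using acyclicity to rule out repeated vertices, and that the candidate set for $j$ is unchanged because $P_i'\cup P_j'=P_i\cup P_j$). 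The deduction of \eqref{eac} from \eqref{eab} under compatibility matches the paper's own remark that $\mathscr{P}_0(\mathbf{u}_\sigma,\mathbf{v})$ is empty unless $\sigma$ is the identity. In short: there is no in-paper proof to compare against, and your blind proof is the standard, correct one from the cited literature.
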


\begin{figure}
    \centering
    \includegraphics[width=0.75\textwidth]{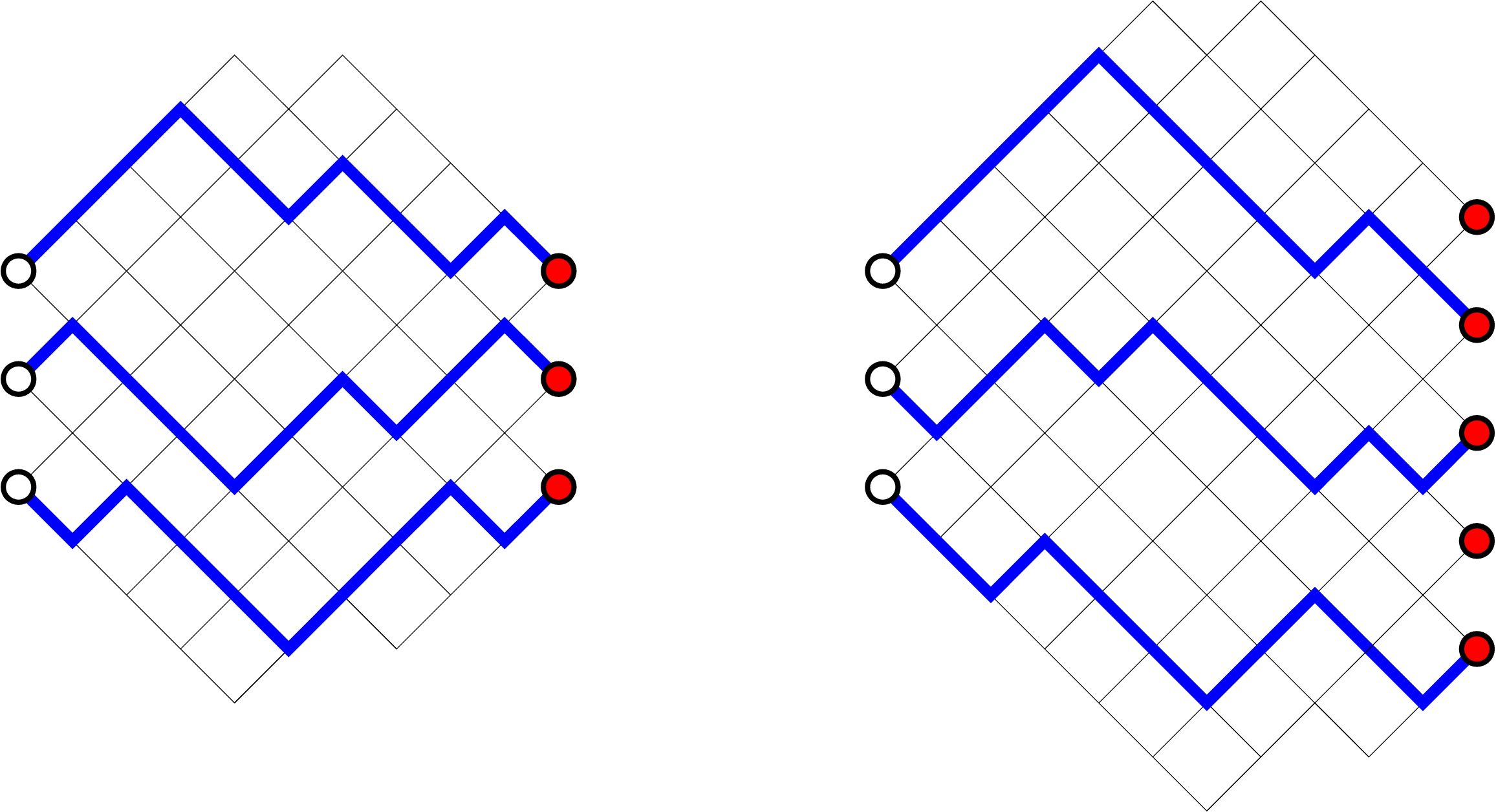}
    \caption{Directed graphs with three starting points and three ending points (left) and three starting points and five ending points (right) together with three nonintersecting paths on them. In the pictures, all the edges are directed from left to right, and the starting and ending points are marked by empty and filled circles, respectively.}
    \label{faa}
\end{figure}

We now state Okada--Stembridge's generalization of Theorem \ref{taa} (see \cite{O1} and \cite{Ste}). What we state below is a slight extension of their theorem, which can be found in the paper of Ciucu and Krattenthaler (see \cite{CK}). We replace the assumption $n=m$ by $n\geq m$ and assume further that $m$ is even. 

\begin{thm}[Okada--Stembridge]\label{tab}
    Let $G$ be a locally finite and acyclic directed graph and $\mathbf{u}=(u_1,\ldots,u_m)$ and $\mathbf{v}=(v_1,\ldots,v_n)$ be an $m$-tuple and an $n$-tuple of vertices on $G$, respectively, where $m\leq n$ and $m$ is even. Then\footnote{The Pfaffian of a skew-symmetric matrix $A$, denoted by $\Pf[A]$, will be defined in Section 2.},
    \begin{equation}\label{ead}
        \sum_{\sigma\in S_m}\sgn(\sigma)\GF[\mathscr{P}_0(\mathbf{u}_{\sigma},\mathbf{v})]=\Pf\big[\Q(\mathbf{u},\mathbf{v})\big]
    \end{equation}
    where $\Q(\mathbf{u},\mathbf{v})=\big[Q_{i,j}\big]_{1\leq i,j\leq m}$ is the skew-symmetric matrix defined by 
    \begin{equation}\label{eae}
    Q_{i,j}=\sum_{s<t}\Big[\GF[\mathscr{P}(u_i,v_s)]\cdot \GF[\mathscr{P}(u_j,v_t)]-\GF[\mathscr{P}(u_i,v_t)]\cdot \GF[\mathscr{P}(u_j,v_s)]\Big].
    \end{equation}
    
    In particular, if $\mathbf{u}$ and $\mathbf{v}$ are compatible, then
    \begin{equation}\label{eaf}
        \GF[\mathscr{P}_0(\mathbf{u},\mathbf{v})]=\Pf\big[\Q(\mathbf{u},\mathbf{v})\big].
    \end{equation}
\end{thm}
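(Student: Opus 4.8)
The plan is to prove the general identity \eqref{ead} by expanding the Pfaffian combinatorially, factoring it over \emph{arbitrary} path families, and then finishing with a single Lindstr\"{o}m--Gessel--Viennot type tail-swapping involution; the ``in particular'' statement \eqref{eaf} will then follow at once from the remark that compatibility forces $\sigma=\mathrm{id}$. First I would record that each matrix entry factors over individual paths. Writing $\M_{ab}=\GF[\mathscr{P}(u_a,v_b)]$, the definition \eqref{eae} can be rewritten as
\[
Q_{i,j}=\sum_{\substack{P_i\colon u_i\to v_{e(i)}\\ P_j\colon u_j\to v_{e(j)}\\ e(i)\neq e(j)}}\sgn\!\big(e(j)-e(i)\big)\,\wt(P_i)\,\wt(P_j),
\]
since the choice $e(i)=s,\,e(j)=t$ with $s<t$ contributes $+1$ and the choice $e(i)=t,\,e(j)=s$ contributes $-1$. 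Substituting this into the combinatorial expansion $\Pf[\Q(\mathbf{u},\mathbf{v})]=\sum_{\pi}\sgn(\pi)\prod_{\{i,j\}\in\pi,\,i<j}Q_{i,j}$ over perfect matchings $\pi$ of $[m]$ and interchanging the order of summation, I would obtain
\[
\Pf[\Q(\mathbf{u},\mathbf{v})]=\sum_{F}\wt(F)\,\Pf[B_F],
\]
where $F=(P_1,\dots,P_m)$ ranges over \emph{all} families with $P_i\in\mathscr{P}(u_i,v_{e(i)})$ (endpoints arbitrary, repetitions allowed), $\wt(F)=\prod_i\wt(P_i)$, and $B_F=\big[\sgn(e(j)-e(i))\big]_{1\le i,j\le m}$ is the skew-symmetric sign matrix of the endpoint vector, with the convention $\sgn(0)=0$. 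The only point to verify is that matchings pairing two indices with equal endpoints drop out automatically, which they do because such a pair contributes the factor $\sgn(0)=0$.

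Next I would evaluate the scalar $\Pf[B_F]$. If two endpoints coincide, say $e(a)=e(b)$, then $B_{ac}=\sgn(e(c)-e(a))=\sgn(e(c)-e(b))=B_{bc}$ for all $c$, so rows $a$ and $b$ of $B_F$ are identical; hence $\det B_F=0$ and $\Pf[B_F]=0$ via $\Pf[B_F]^2=\det B_F$. If the endpoints are distinct, let $w_F\in S_m$ record the rank of $e(i)$ among $e(1),\dots,e(m)$; since $w_F$ is order-preserving we get $B_F=P_{w_F}^{\top}S\,P_{w_F}$, where $S=[\sgn(j-i)]$ is the all-ones upper-triangular skew matrix with $\Pf[S]=1$, so $\Pf[B_F]=\sgn(w_F)$ by the standard relation $\Pf[P^{\top}AP]=\det(P)\,\Pf[A]$. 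Consequently
\[
\Pf[\Q(\mathbf{u},\mathbf{v})]=\sum_{F\colon\, e\text{ injective}}\sgn(w_F)\,\wt(F),
\]
the sum now running over all families of $m$ paths from $u_1,\dots,u_m$ to \emph{distinct} endpoints among $v_1,\dots,v_n$, intersecting or not.

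Finally, on those families that contain an intersection I would run the classical tail-swapping involution: fix a total order on the vertices of $G$, locate the earliest vertex at which two paths of $F$ meet, and interchange the tails of the two offending paths (say the two of smallest index) beyond that vertex. This preserves $\wt(F)$ and the set of starting points while transposing two endpoints $e(a)\leftrightarrow e(b)$, so $w_F$ is replaced by $w_F\cdot(a\,b)$ and $\sgn(w_F)$ flips; it is an involution because the earliest meeting vertex is unchanged, and it keeps the endpoints distinct. Hence all intersecting families cancel and only nonintersecting ones survive. A nonintersecting family with increasing endpoints $j_1<\dots<j_m$ and $P_k\in\mathscr{P}(u_{\sigma(k)},v_{j_k})$ satisfies $e(\sigma(k))=j_k$, so $w_F=\sigma^{-1}$ and $\sgn(w_F)=\sgn(\sigma)$; grouping the survivors by $\sigma$ yields exactly $\sum_{\sigma\in S_m}\sgn(\sigma)\GF[\mathscr{P}_0(\mathbf{u}_\sigma,\mathbf{v})]$, which is \eqref{ead}. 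When $\mathbf{u}$ and $\mathbf{v}$ are compatible, each $\mathscr{P}_0(\mathbf{u}_\sigma,\mathbf{v})$ with $\sigma\neq\mathrm{id}$ is empty, so the sum collapses to $\GF[\mathscr{P}_0(\mathbf{u},\mathbf{v})]$, giving \eqref{eaf}.

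I expect the main obstacle to be the two algebraic reductions of the first two paragraphs: the factorization $\Pf[\Q]=\sum_F\wt(F)\,\Pf[B_F]$ (which hinges on the interchange of summation and on repeated-endpoint matchings vanishing) together with the evaluation $\Pf[B_F]=\sgn(w_F)$. Once the problem is reduced to a signed sum of arbitrary path families carrying the single statistic $\sgn(w_F)$, the remaining tail-swap involution is routine LGV; the payoff of this reduction is precisely that a single involution suffices, rather than one entangled with the matching structure.
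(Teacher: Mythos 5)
Your proof is correct, but note that the paper itself offers no proof of Theorem \ref{tab}: it is quoted as known, with the statement attributed to Okada \cite{O1} and Stembridge \cite{Ste} and the extension to $n>m$ to Ciucu--Krattenthaler \cite{CK}. What you have written is essentially a self-contained reconstruction of Stembridge's argument, organized in a slightly cleaner way: rather than running a single involution on pairs (perfect matching, path family), you first collapse the matching sum algebraically via $\sum_{\pi}\sgn(\pi)\prod_{(i,j)\in\pi}\sgn(e(j)-e(i))=\Pf[B_F]$, evaluate $\Pf[B_F]$ as $0$ or $\sgn(w_F)$ using $\Pf[B^{T}AB]=\det(B)\Pf[A]$ and $\Pf\big[[\sgn(j-i)]\big]=1$ (this is the matrix $E_m$ of Section 2 of the paper), and only then run a single LGV tail-swap. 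The algebraic steps are all sound (the non-injective endpoint functions are killed either by the $\sgn(0)=0$ convention inside the matching product or by the repeated-rows argument for $\det B_F$, and the minor slip of placing $P_{w_F}$ rather than $P_{w_F^{-1}}$ in the congruence is harmless since $\sgn(w_F)=\sgn(w_F^{-1})$). The one place you assert more than you verify is that the tail-swap is an involution ``because the earliest meeting vertex is unchanged'': this is true but needs the observation that swapping tails at the minimal intersection vertex $x$ cannot create a new multiply-covered vertex (any vertex on both new paths lies in $h_a\cap h_b$ or $t_a\cap t_b$, hence was already an intersection), while $x$ itself and the index set of paths through it are preserved; without this check the naive ``first intersection'' rule is exactly the classical pitfall of LGV-type arguments. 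With that verification supplied, the proof is complete, and the reduction of \eqref{eaf} to \eqref{ead} via compatibility matches the paper's conventions exactly.
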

Stembridge pointed out\footnote{Stembridge made this comment for the case when $\mathbf{u}$ and $\mathbf{v}$ are compatible, but this comment can also be applied even when $\mathbf{u}$ and $\mathbf{v}$ are not compatible.} in \cite{Ste} that if $m$ is odd, one can still use the above theorem using the following idea: add a phantom vertex $u_0=v_0$ to the graph $G$, with no incident edges and apply the theorem above using $\mathbf{u}'=(u_0,u_1,\ldots,u_m)$ and $\mathbf{v}'=(v_0,v_1,\ldots,v_n)$ instead of $\mathbf{u}$ and $\mathbf{v}$. Since $u_0$ can only be connected with $v_0$ using a single path of length zero, which has weight $1$, one can show that\footnote{On the right side of the equality, $S_{m+1}$ needs to be understood as the set of permutations of the set $\{0,1,\ldots,m\}$ rather than that of the set $[m+1]$. The equality holds because the summand $\GF[\mathscr{P}_0(\mathbf{u}'_{\sigma'},\mathbf{v}')]$ vanishes unless $\sigma'(0)=0$.} $\sum_{\sigma\in S_m}\sgn(\sigma)\GF[\mathscr{P}_0(\mathbf{u}_{\sigma},\mathbf{v})]=\sum_{\sigma'\in S_{m+1}}\sgn(\sigma')\GF[\mathscr{P}_0(\mathbf{u}'_{\sigma'},\mathbf{v}')]$. Since $\mathbf{u}'$ consists of $(m+1)$ vertices and $(m+1)$ is even, the above theorem gives a Pfaffian expression for the right side of the equation, and thus, we obtain a Pfaffian expression for $\sum_{\sigma\in S_m}\sgn(\sigma)\GF[\mathscr{P}_0(\mathbf{u}_{\sigma},\mathbf{v})]$ this way. One can notice that, unlike when $m$ is even, the Pfaffian expression for $m$ odd obtained this way has order $(m+1)$.

One natural question arising after comparing Theorems \ref{taa} and \ref{tab} is the following.
\begin{itemize}
    \item While there is no parity condition on $m$ (the number of starting points) in Theorem \ref{taa}, $m$ should be even to apply Theorem \ref{tab} directly. One can still apply Theorem \ref{tab} when $m$ is odd, following Stembridge's remark above, but the Pfaffian has a different order than when $m$ is even. Is there an alternative formula to \eqref{ead} and \eqref{eaf} that does not depend on the parity of $m$?
\end{itemize}

The following theorem, which is the main theorem of this paper, gives an affirmative answer to the question above.

\begin{thm}\label{tac}
Let $G$ be a locally finite and acyclic directed graph and $\mathbf{u}=(u_1,\ldots,u_m)$ and $\mathbf{v}=(v_1,\ldots,v_n)$ be an $m$-tuple and an $n$-tuple of vertices on $G$, respectively, where $m\leq n$. Then,
    \begin{equation}\label{eag}
        \Bigg[\sum_{\sigma\in S_m}\sgn(\sigma)\GF[\mathscr{P}_0(\mathbf{u}_{\sigma},\mathbf{v})]\Bigg]^2=\det\Big[\M(\mathbf{u},\mathbf{v})U_{n}\M(\mathbf{u},\mathbf{v})^T\Big]=\det\Big[\M(\mathbf{u},\mathbf{v})U_{n}^T\M(\mathbf{u},\mathbf{v})^T\Big],
    \end{equation}
    where $\M(\mathbf{u},\mathbf{v})$ is the path matrix and $U_{n}=[u_{i,j}]_{1\leq i,j\leq n}$ is the upper triangular matrix defined by $u_{i,j}=
    \begin{cases}
    2, & \text{if $i<j$}\\
    1, & \text{if $i=j$}\\    
    0, & \text{if $i>j$}
    \end{cases}$.
    
    In particular, if $\mathbf{u}$ and $\mathbf{v}$ are compatible, then
    \begin{equation}\label{eah}
        \GF[\mathscr{P}_0(\mathbf{u},\mathbf{v})]^2=\det\Big[\M(\mathbf{u},\mathbf{v})U_{n}\M(\mathbf{u},\mathbf{v})^T\Big]=\det\Big[\M(\mathbf{u},\mathbf{v})U_{n}^T\M(\mathbf{u},\mathbf{v})^T\Big].
    \end{equation}
\end{thm}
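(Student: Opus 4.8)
The plan is to reduce the statement to a purely linear-algebraic identity about the path matrix $M\coloneqq\M(\mathbf{u},\mathbf{v})$ and then to treat the two parities of $m$ separately, invoking Theorem \ref{tab} for the even case and Stembridge's phantom-vertex device to push the odd case back to the even one. The first step is to record a decomposition of $MU_nM^T$. Writing $M_{i,s}=\GF[\mathscr{P}(u_i,v_s)]$ and letting $\mathbf{r}=(r_1,\ldots,r_m)^T$ with $r_i=\sum_{s=1}^{n}M_{i,s}$ the vector of row sums, a direct expansion using $(U_n)_{s,t}=2$ for $s<t$, $(U_n)_{s,t}=1$ for $s=t$, and $(U_n)_{s,t}=0$ for $s>t$ gives
\[
\bigl(MU_nM^T\bigr)_{i,j}=\sum_{s}M_{i,s}M_{j,s}+2\sum_{s<t}M_{i,s}M_{j,t}=Q_{i,j}+r_ir_j .
\]
Hence $MU_nM^T=\Q(\mathbf{u},\mathbf{v})+\mathbf{r}\mathbf{r}^T$, the Okada--Stembridge skew-symmetric matrix plus a rank-one symmetric correction. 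Since $\bigl(MU_nM^T\bigr)^T=MU_n^TM^T$, the two determinants in \eqref{eag} are those of a matrix and its transpose and so are automatically equal; it therefore suffices to evaluate $\det\bigl[MU_nM^T\bigr]$. Throughout I use that, by \eqref{ead}, the bracketed quantity on the left of \eqref{eag} equals $\Pf\bigl[\Q(\mathbf{u},\mathbf{v})\bigr]$ when $m$ is even.

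For $m$ even I would combine the matrix determinant lemma $\det\bigl[\Q+\mathbf{r}\mathbf{r}^T\bigr]=\det\Q+\mathbf{r}^T\operatorname{adj}(\Q)\mathbf{r}$ (a polynomial identity, valid even when $\Q$ is singular) with the observation that $\operatorname{adj}(\Q)$ is skew-symmetric: indeed $\operatorname{adj}(\Q)^T=\operatorname{adj}(\Q^T)=\operatorname{adj}(-\Q)=(-1)^{m-1}\operatorname{adj}(\Q)=-\operatorname{adj}(\Q)$ for $m$ even. The quadratic form of a skew-symmetric matrix vanishes, so $\mathbf{r}^T\operatorname{adj}(\Q)\mathbf{r}=0$ and $\det\bigl[MU_nM^T\bigr]=\det\Q=\Pf[\Q]^2$, which is the squared left-hand side of \eqref{eag} by Theorem \ref{tab}. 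This establishes \eqref{eag} for every graph with an even number of sources.

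For $m$ odd I would adjoin the phantom vertex $u_0=v_0$ with no incident edges, exactly as in Stembridge's remark. This replaces $M$ by $M'=\bigl(\begin{smallmatrix}1 & 0\\ 0 & M\end{smallmatrix}\bigr)$ and $U_n$ by $U_{n+1}=\bigl(\begin{smallmatrix}1 & 2\mathbf{1}^T\\ 0 & U_n\end{smallmatrix}\bigr)$, and a short block computation (using $\mathbf{1}^TM^T=\mathbf{r}^T$) yields $M'U_{n+1}(M')^T=\bigl(\begin{smallmatrix}1 & 2\mathbf{r}^T\\ 0 & MU_nM^T\end{smallmatrix}\bigr)$, whose determinant equals $\det\bigl[MU_nM^T\bigr]$ by block-triangularity. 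The augmented system has even size $m+1$, so the even case just proven applies and gives $\det\bigl[M'U_{n+1}(M')^T\bigr]$ equal to the square of the augmented bracketed sum; by Stembridge's remark the latter equals the original bracketed sum. Combining the two equalities gives \eqref{eag} for $m$ odd, and the compatible specialization \eqref{eah} then follows from the last line of Theorem \ref{tab} (which forces the bracketed sum to collapse to $\GF[\mathscr{P}_0(\mathbf{u},\mathbf{v})]$).

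The main obstacle I anticipate is the even case, and specifically the role of the rank-one term $\mathbf{r}\mathbf{r}^T$. The conceptual heart of the argument is recognizing that $MU_nM^T$ splits as $\Q+\mathbf{r}\mathbf{r}^T$ and that this correction is invisible to the determinant precisely because $\operatorname{adj}(\Q)$ is skew-symmetric in even order; once the parity of the adjugate and the vanishing of its associated quadratic form are pinned down, the odd case is a purely formal reduction and the equality of the two determinants in \eqref{eag} is immediate from transposition.
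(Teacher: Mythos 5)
Your proof is correct and follows essentially the same route as the paper: your decomposition $\M(\mathbf{u},\mathbf{v})U_n\M(\mathbf{u},\mathbf{v})^T=\Q(\mathbf{u},\mathbf{v})+\mathbf{r}\mathbf{r}^T$ is exactly the paper's Lemma \ref{tbb} applied with $A=E_n$ and $x=1$ (since $E_n(1)=U_n$ and $\Q(\mathbf{u},\mathbf{v})=\M(\mathbf{u},\mathbf{v})E_n\M(\mathbf{u},\mathbf{v})^T$), and your phantom-vertex reduction for odd $m$ mirrors the paper's augmentation $\widetilde{Z}$ and the Laplace expansion in \eqref{ebjk}--\eqref{ebl}. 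The only cosmetic differences are that you reprove the rank-one invariance via the matrix determinant lemma and the skew-symmetry of the adjugate in even order, where the paper cites Stembridge's identity $\det[a_{i,j}+tx_ix_j]=\det[a_{i,j}]$ (Proposition \ref{tba}), and that you identify the bracketed path sum with $\Pf[\Q(\mathbf{u},\mathbf{v})]$ via Theorem \ref{tab} directly, where the paper reaches the same quantity through Okada's minor-summation formula (Theorem \ref{tbc}) combined with Lindstr\"{o}m--Gessel--Viennot.
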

As mentioned earlier, the formulas \eqref{eag} and \eqref{eah} in Theorem \ref{tac} give positive answers to the question stated above. More precisely, there is no parity condition on $m$ in Theorem \ref{tac}. Thus, we have a uniform formula that works for both $m$ even and odd. Furthermore, the formulas in \eqref{eag} and \eqref{eah} are natural extensions of \eqref{eab} and \eqref{eac} because if $m=n$, then $\M(\mathbf{u},\mathbf{v})$ becomes a square matrix and we have $\det\Big[\M(\mathbf{u},\mathbf{v})U_{n}\M(\mathbf{u},\mathbf{v})^T\Big]=\det\Big[\M(\mathbf{u},\mathbf{v})U_{n}^T\M(\mathbf{u},\mathbf{v})^T\Big]=\Big[\det\big[\M(\mathbf{u},\mathbf{v})\big]\Big]^2$.

Throughout the paper, we provide several applications of Theorem \ref{tac}. One of them is a combinatorial interpretation of \eqref{eag} and \eqref{eah}, which we call a \textit{reflection principle for nonintersecting paths}, stated in Theorem \ref{tbe}. It states that under some mild assumptions, to enumerate families of nonintersecting paths with fixed starting points and unfixed ending points on a graph, one can instead enumerate families of nonintersecting paths with the same number of starting and ending points (thus the ending points are also fixed) on a new graph that one can obtain from the original graph by ``almost" symmetrizing it. Analogous theorem for lozenge tilings of regions with free boundaries and other applications will also be presented. 

The paper is organized as follows.
\begin{itemize}
    \item In Section \ref{section2}, we first recall the definition of Pfaffian and some properties of skew-symmetric matrices (see Proposition \ref{tba}). We then prove a lemma (Lemma \ref{tbb}) and recall Okada's result on the sum of maximum minors of rectangular matrices in Theorem \ref{tbc}. Using the lemma and Okada's formula, we then prove a formula for the square of the sum of maximum minors in Theorem \ref{tbd}. Using this theorem, we prove Theorem \ref{tac}. We finish the section by stating and proving a reflection principle for nonintersecting paths in Theorem \ref{tbe}.
    \item In Section \ref{section3}, we apply Theorem 1.3 to lozenge tilings enumeration problems. We prove Theorem \ref{tca}, which states that the enumeration of lozenge tilings of a large family of regions with free boundaries can be resolved by instead enumerating the lozenge tilings of their counterpart regions that do not have any free boundaries. Interestingly, the counterpart regions can be obtained by ``almost" symmetrizing the regions with free boundaries.
    \item In Section \ref{section4}, we give applications of Theorems \ref{tca} and \ref{tac}, including 1) a new family of regions whose tiling generating function is given by a simple product formula (see Theorem \ref{tda}), 2) a simpler proof of a factorization theorem for lozenge tilings of hexagons with holes (see Theorem \ref{tdb}), and 3) new determinant formulas for the volume generating functions of shifted plane partitions of a shifted shape and symmetric plane partitions of a symmetric shape (see Theorem \ref{tdd}).
\end{itemize}

\section{The sum of maximum minors and a reflection principle for nonintersecting paths}\label{section2}

A $2m\times2m$ matrix $A=[a_{i,j}]_{1\leq i,j\leq 2m}$ is \textit{skew symmetric} if $a_{i,j}+a_{j,i}$=0 for all $i,j\in[2m]$. To define a \textit{Pfaffian} of a skew-symmetric matrix $A$, denoted by $\Pf_{2m} (A)$ or $\Pf (A)$ (if the size of the matrix $A$ is clear from the context), we first introduce some notions. Consider a complete graph $K_{2m}$ and label its vertices by the elements of $[2m]=\{1,\ldots,2m\}$ such that different vertices have different labels. If an edge of $K_{2m}$ is adjacent to vertices labeled by $i$ and $j$ (with $i<j$), then we denote the edge by $(i,j)$. A \textit{1-factor} (or a \textit{perfect matching}) of $K_{2m}$ is a collection of $m$ edges of $K_{2m}$ that covers every vertex exactly once. We denote the set of 1-factor of $K_{2m}$ by $\mathscr{F}_{m}$. We say that two edges $(i,j)$ and $(k,l)$ of $\pi\in\mathscr{F}_{m}$ are crossing if either $i<k<j<l$ or $k<i<l<j$ holds. The crossing number of $\pi$, denoted by $\operatorname{cr}(\pi)$, is the number of crossed pairs in $\pi$ and the sign of the 1-factor $\pi\in\mathscr{F}_{m}$, denoted by $\sgn(\pi)$, is $\sgn(\pi)\coloneqq(-1)^{\operatorname{cr}(\pi)}$. Then the Pfaffian of a skew-symmetric matrix $A=[a_{i,j}]_{1\leq i,j\leq 2m}$ is defined as follows.
\begin{equation}\label{eba}
    \Pf(A)=\sum_{\pi\in\mathscr{F}_{m}}\sgn(\pi)\prod_{(i,j)\in\pi}a_{i,j}.
\end{equation}

Among many properties of skew-symmetric matrices, we recall some of them that we will use later in this paper. They are stated and proved in \cite{Ste} (see Propositions 2.2 and 2.3 in that paper).

\begin{prop}\label{tba}
    For any skew-symmetric matrix $A=[a_{i,j}]_{1\leq i,j\leq 2m}$,
    \begin{enumerate}
        \item $\Pf(A)^2=\det(A)$.
        \item $\det[a_{i,j}+tx_ix_j]=\det[a_{i,j}]=\det(A)$.
    \end{enumerate}
\end{prop}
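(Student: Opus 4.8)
The plan is to treat the two parts separately, since part (2) concerns matrices that are no longer skew-symmetric and so cannot be reduced to part (1). For part (1) I would first pass to the \emph{universal} case: let $\mathcal{A}=[x_{ij}]$ be the skew-symmetric matrix whose above-diagonal entries are independent indeterminates over $\mathbb{Z}$. Both $\Pf(\mathcal{A})^{2}$ and $\det(\mathcal{A})$ lie in $\mathbb{Z}[x_{ij}:i<j]$, and every skew-symmetric matrix $A$ over a commutative ring is a specialization $x_{ij}\mapsto a_{ij}$; hence it suffices to prove the identity in this polynomial ring, equivalently in its fraction field $F=\mathbb{Q}(x_{ij})$. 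Over $F$ the form is nondegenerate (specializing to the block form below gives determinant $1\neq 0$, so $\det(\mathcal{A})\not\equiv 0$), so by the normal-form theorem for alternating bilinear forms there is an invertible $P$ with $P^{T}\mathcal{A}P=J$, where $J=\bigoplus_{i=1}^{m}\left(\begin{smallmatrix}0&1\\-1&0\end{smallmatrix}\right)$. A direct check gives $\det(J)=1$, and since the only $1$-factor avoiding every zero entry is the noncrossing one $\{(1,2),(3,4),\dots\}$, also $\Pf(J)=1$.

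The crux is then the pair of transformation laws $\det(P^{T}AP)=\det(P)^{2}\det(A)$ and $\Pf(P^{T}AP)=\det(P)\,\Pf(A)$. The first is immediate from multiplicativity of the determinant. For the second I would use the exterior algebra: associating to $A$ the $2$-form $\omega_{A}=\sum_{i<j}a_{ij}\,e_{i}\wedge e_{j}$, one checks directly from \eqref{eba} that $\tfrac{1}{m!}\,\omega_{A}^{\wedge m}=\Pf(A)\,e_{1}\wedge\cdots\wedge e_{2m}$, the crossing-number sign in \eqref{eba} being exactly the sign produced when the corresponding wedge monomial is reordered into $e_{1}\wedge\cdots\wedge e_{2m}$. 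Since the algebra endomorphism of the exterior algebra induced by $P$ sends $\omega_{A}$ to $\omega_{PAP^{T}}$ and acts on the top power as multiplication by $\det P$, applying it to this identity yields $\Pf(PAP^{T})=\det(P)\,\Pf(A)$; replacing $P$ by $P^{T}$ gives the stated form. Feeding $P^{T}\mathcal{A}P=J$ into both laws gives $1=\det(P)\,\Pf(\mathcal{A})$ and $1=\det(P)^{2}\det(\mathcal{A})$, whence $\Pf(\mathcal{A})^{2}=\det(P)^{-2}=\det(\mathcal{A})$ over $F$, and the identity descends to every commutative ring.

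For part (2) the first equality is the substantive one ($\det[a_{ij}]=\det(A)$ being just notation). Writing the perturbation as the rank-one symmetric matrix $t\,xx^{T}$ with $x=(x_{1},\dots,x_{2m})^{T}$, I would expand $\det(A+t\,xx^{T})$ by multilinearity in the columns, splitting each column into its $A$-part and its $tx_{j}x$-part. Any term using the perturbed column in two or more positions has two proportional columns and vanishes; thus only the unperturbed term $\det(A)$ and the $2m$ terms with a single perturbed column survive, and by cofactor expansion the latter sum to $t\,x^{T}\operatorname{adj}(A)\,x$. It remains to note that $\operatorname{adj}(A)$ is skew-symmetric: from $\operatorname{adj}(M^{T})=\operatorname{adj}(M)^{T}$, the relation $A^{T}=-A$, and $\operatorname{adj}(cM)=c^{\,n-1}\operatorname{adj}(M)$ we get $\operatorname{adj}(A)^{T}=\operatorname{adj}(A^{T})=\operatorname{adj}(-A)=(-1)^{2m-1}\operatorname{adj}(A)=-\operatorname{adj}(A)$, where evenness of the size is used. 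Since $y^{T}By=0$ for every skew-symmetric $B$ (pair the $(i,j)$ and $(j,i)$ terms), the perturbation term vanishes and $\det(A+t\,xx^{T})=\det(A)$.

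I expect the main obstacle to be the Pfaffian transformation law $\Pf(P^{T}AP)=\det(P)\,\Pf(A)$ underlying part (1); everything else is either routine linear algebra or the standard normal-form theorem, and part (2) is elementary once the adjugate is seen to be skew-symmetric. The one delicate point inside that step is the sign bookkeeping identifying $\tfrac{1}{m!}\,\omega_{A}^{\wedge m}$ with $\Pf(A)\,e_{1}\wedge\cdots\wedge e_{2m}$. A purely combinatorial alternative, avoiding the exterior algebra, is to prove $\Pf(A)^{2}=\det(A)$ directly by superimposing two $1$-factors into a disjoint union of even cycles and matching $\sgn(\pi_{1})\sgn(\pi_{2})$ against $\sgn(\sigma)$ in the Leibniz expansion of $\det(A)$; there the same sign bookkeeping reappears as the essential difficulty.
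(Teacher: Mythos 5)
Your proposal is correct, but note that the paper does not actually prove Proposition \ref{tba} at all: it simply cites Propositions 2.2 and 2.3 of Stembridge's paper \cite{Ste}. So you are supplying a proof where the paper defers to the literature. Your argument for part (2) --- write the perturbation as the rank-one matrix $t\,xx^{T}$, expand multilinearly in columns, observe that terms with two perturbed columns vanish and that the single-column terms assemble into $t\,x^{T}\operatorname{adj}(A)\,x$, which is killed by skew-symmetry of the adjugate --- is essentially the standard argument and is, up to presentation, the one in the cited source. For part (1), your route (pass to the generic skew-symmetric matrix over $\mathbb{Q}(x_{ij})$, reduce to the symplectic normal form $J$ via $P^{T}\mathcal{A}P=J$, and combine $\det(P^{T}AP)=\det(P)^{2}\det(A)$ with $\Pf(P^{T}AP)=\det(P)\Pf(A)$, the latter proved through the exterior-algebra characterization $\tfrac{1}{m!}\omega_{A}^{\wedge m}=\Pf(A)\,e_{1}\wedge\cdots\wedge e_{2m}$) is one of the standard proofs; the combinatorial alternative you mention (superimposing two $1$-factors into even cycles) is the other, and either is acceptable. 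Two small points worth making explicit: the identification of the crossing-number sign in \eqref{eba} with the sign of the reordering permutation is a genuine lemma that should be proved rather than asserted, and in part (2) the diagonal terms $B_{ii}y_i^{2}$ of $y^{T}\operatorname{adj}(A)y$ are not handled by pairing $(i,j)$ with $(j,i)$; they vanish because each diagonal cofactor is the determinant of an odd-order skew-symmetric matrix (work universally over $\mathbb{Z}[x_{ij}]$ so that $2c=0$ forces $c=0$, since the paper's definition of skew-symmetry only imposes $a_{ii}+a_{ii}=0$).
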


To prove the main theorem, the following simple lemma is needed. For any skew symmetric matrix $A=[a_{i,j}]$ and an indeterminate $x$, let $A(x)$ be the matrix defined by $A(x)\coloneqq[a_{i,j}+x]$. 

\begin{lem}\label{tbb}
    Let $m$, $n$, and $k$ be nonnegative integers such that $m+k$ is even. Let $Z=[z_{i,j}]_{1\leq i\leq m, 1\leq j\leq n}$ be an $m\times n$ matrix, $A=[a_{i,j}]_{1\leq i,j\leq n}$ be an $n\times n$ skew symmetric matrix, $H$ be an $m\times k$ matrix, and $B$ be an $k\times k$ skew symmetric matrix. Then, we have
    \begin{equation}\label{ebb}
        \det
        \begin{bmatrix}
            ZAZ^T & H\\\
            -H^T & B
        \end{bmatrix}
        =\det
        \begin{bmatrix}
            ZA(x)Z^T & H\\\
            -H^T & B
        \end{bmatrix}.
    \end{equation}
    In particular, if $k=0$, then $m$ is even and we have
    \begin{equation}\label{ebc}
        \det\Big[ZAZ^T\Big]=\det\Big[ZA(x)Z^T\Big].
    \end{equation}
\end{lem}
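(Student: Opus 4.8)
The plan is to recognize $A(x)$ as a rank-one additive perturbation of $A$, push this perturbation through the congruence $Z(\,\cdot\,)Z^T$, and then reduce everything to Proposition \ref{tba}(2).

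First I would write $A(x)=A+x\,\mathbf{1}\mathbf{1}^T$, where $\mathbf{1}=(1,\ldots,1)^T$ is the all-ones column vector of length $n$; indeed, adding $x$ to every entry of $A$ is exactly adding $x$ times the all-ones matrix $\mathbf{1}\mathbf{1}^T$. Conjugating by $Z$ then gives $ZA(x)Z^T=ZAZ^T+x\,(Z\mathbf{1})(Z\mathbf{1})^T$. Writing $w\coloneqq Z\mathbf{1}$ for the column vector of row sums of $Z$, the top-left block of the right-hand matrix in \eqref{ebb} differs from that of the left-hand matrix precisely by the symmetric rank-one term $x\,ww^T$, while the other three blocks $H$, $-H^T$, and $B$ are unchanged.

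Next I would observe that the full block matrix $M\coloneqq\begin{bmatrix}ZAZ^T & H\\ -H^T & B\end{bmatrix}$ is skew symmetric: the block $ZAZ^T$ is skew symmetric because $A$ is (so $(ZAZ^T)^T=ZA^TZ^T=-ZAZ^T$), $B$ is skew symmetric by hypothesis, and the off-diagonal blocks $H$ and $-H^T$ are arranged so that $M^T=-M$. Its size is $m+k$, which is \emph{even} by hypothesis—this is the one place the parity assumption is used, since Proposition \ref{tba}(2) is stated for skew-symmetric matrices of even order. Padding $w$ with $k$ zeros to form $\widetilde{w}\coloneqq(w_1,\ldots,w_m,0,\ldots,0)^T$, one has
\[
x\,\widetilde{w}\widetilde{w}^T=\begin{bmatrix}x\,ww^T & 0\\ 0 & 0\end{bmatrix},
\]
so the right-hand matrix in \eqref{ebb} is exactly $M+x\,\widetilde{w}\widetilde{w}^T=[\,M_{i,j}+x\,\widetilde{w}_i\widetilde{w}_j\,]$.

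Finally I would invoke Proposition \ref{tba}(2) applied to the skew-symmetric matrix $M$ with $t=x$ and $x_i=\widetilde{w}_i$, which yields $\det[\,M_{i,j}+x\,\widetilde{w}_i\widetilde{w}_j\,]=\det(M)$; this is precisely \eqref{ebb}. For the special case $k=0$ the matrix $M=ZAZ^T$ is itself skew symmetric of even order $m$, and the identical argument (with no padding needed) gives \eqref{ebc}. I do not anticipate a genuine obstacle here; the only point requiring care is to confirm that the perturbation stays confined to the top-left block so that Proposition \ref{tba}(2) applies verbatim, which the zero-padding of $w$ into $\widetilde{w}$ makes transparent.
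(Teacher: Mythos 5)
Your proposal is correct and follows essentially the same route as the paper: writing $A(x)=A+x\,\mathbf{1}\mathbf{1}^T$ and conjugating by $Z$ is just the matrix-notation version of the paper's entrywise computation of $ZA(x)Z^T$, and your zero-padded vector $\widetilde{w}$ is exactly the paper's choice of $x_i=\sum_k z_{i,k}$ for $i\leq m$ and $x_i=0$ otherwise before invoking Proposition \ref{tba}(2). No substantive difference.
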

\begin{proof}
    Since $A$ and $B$ are skew-symmetric matrices, $ZAZ^T$ is also a skew-symmetric matrix, and so is the block matrix
    $\begin{bmatrix}
            ZAZ^T & H\\\
            -H^T & B
    \end{bmatrix}$.
    The $(i,j)$-entry of $ZA(x)Z^T$ is
    \begin{equation}\label{ebd}
        \sum_{k,l=1}^{n}(a_{k,l}+x)z_{i,k}z_{j,l}=\sum_{k,l=1}^{n}a_{k,l}z_{i,k}z_{j,l}+x\Bigg(\sum_{k=1}^{n}z_{i,k}\Bigg)\Bigg(\sum_{k=1}^{n}z_{j,k}\Bigg)
    \end{equation}
    and thus the $(i,j)$-entry of $ZAZ^T=ZA(0)Z^T$ is $\sum_{k,l=1}^{n}a_{k,l}z_{i,k}z_{j,l}$. Therefore, \eqref{ebb} holds by applying Proposition \ref{tba} (b) to
    $\begin{bmatrix}
            ZAZ^T & H\\\
            -H^T & B
    \end{bmatrix}$
    by setting $t=x$ and $x_i=
    \begin{cases}
        \sum_{k=1}^{n}z_{i,k}, & \text{if $1\leq i\leq m$}\\
            0, & \text{otherwise}
    \end{cases}.
    $
    This completes the proof.
\end{proof}

For positive integers $m$ and $n$, consider an $m\times n$ matrix $Z=(z_{i,j})_{1\leq i\leq m, 1\leq j\leq n}$. We then define $\widetilde{Z}=(\widetilde{z}_{i,j})_{0\leq i\leq m, 0\leq j\leq n}$ by $\widetilde{z}_{i,j}=\begin{cases}
    1, & \text{if $i=j=0$}\\
    0, & \text{if [$i=0$ and $j>0$] or [$i>0$ and $j=0$]}\\
    z_{i,j}, & \text{otherwise}
\end{cases}$.
Denote the sum of the maximum minors of $Z$ by $\sigma(Z)$. More precisely,
\begin{equation}\label{ebe}
    \sigma(Z)\coloneqq\sum_{J\subseteq[n]}\det Z_{[m],J},
\end{equation}
where $J$ runs over all subsets of $[n]$ whose cardinality is $m$. In \cite{O1}, Okada proved the following theorem, which states that $\sigma(Z)$ is given by a Pfaffian of a certain skew-symmetric matrix. We express Okada's results using Ishikawa and Wakayama's expression in \cite{IW}. Let $E_{n}=[e_{i,j}]_{1\leq i,j\leq n}$ be the skew-symmetric matrix defined by $e_{i,j}=
    \begin{cases}
    1, & \text{if $i<j$}\\
    0, & \text{if $i=j$}\\    
    -1, & \text{if $i>j$}
    \end{cases}$.

\begin{thm}[Theorem 3 in \cite{O1}]\label{tbc}
For any $m\times n$ matrix $Z$ with $m\leq n$, the sum of maximum minors of $Z$, $\sigma(Z)$, satisfies the following.

    (a) If $m$ is odd, then
    \begin{equation}\label{ebf}
        \sigma(Z)=\Pf_{m+1}[\widetilde{Z}E_{n+1}\widetilde{Z}^T].
    \end{equation}
    
    (b) If $m$ is even, then
    \begin{equation}\label{ebg}
        \sigma(Z)=\Pf_{m}[ZE_nZ^T].
    \end{equation}
\end{thm}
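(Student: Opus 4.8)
The plan is to deduce both parts from a single engine: the minor-summation formula for Pfaffians of Ishikawa and Wakayama \cite{IW}, which is the Pfaffian analogue of the Cauchy--Binet formula. In the notation at hand it reads as follows: if $\ell$ is even, $Z$ is an $\ell\times N$ matrix, and $A$ is an $N\times N$ skew-symmetric matrix, then
\begin{equation*}
    \Pf\big[ZAZ^T\big]=\sum_{\substack{J\subseteq[N]\\ |J|=\ell}}\det\big(Z_{[\ell],J}\big)\,\Pf\big(A_{J,J}\big),
\end{equation*}
where $A_{J,J}$ is the principal submatrix of $A$ on the rows and columns indexed by $J$. I would apply this with $A=E_n$ (and with $A=E_{n+1}$ in the odd case) and then show that every Pfaffian weight $\Pf(A_{J,J})$ equals $1$, which collapses the right-hand side precisely to the sum of maximum minors $\sigma(Z)$.

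The key computation is that $\Pf\big((E_n)_{J,J}\big)=1$ for every $\ell$-element subset $J$. If $J=\{j_1<\cdots<j_\ell\}$, then because the $j_p$ are increasing we have $j_p<j_q\iff p<q$, so the principal submatrix $(E_n)_{J,J}$ carries $+1$ above the diagonal and $-1$ below it; that is, it equals $E_\ell$. It therefore suffices to check $\Pf(E_\ell)=1$ for all even $\ell$, which I would do by induction via the Laplace-type expansion of the Pfaffian along the first row, $\Pf(E_\ell)=\sum_{j=2}^{\ell}(-1)^{j}\,(E_\ell)_{1,j}\,\Pf\big((E_\ell)_{\widehat 1\widehat j}\big)$: each deleted minor $(E_\ell)_{\widehat 1\widehat j}$ is again of the all-ones-above-diagonal type, hence equals $E_{\ell-2}$ with Pfaffian $1$ by the inductive hypothesis, and $(E_\ell)_{1,j}=1$, so the expression reduces to $\sum_{j=2}^{\ell}(-1)^{j}=1$. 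Combining this with the minor-summation formula immediately yields part (b): $\sigma(Z)=\Pf_m[ZE_nZ^T]$.

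For part (a), where $m$ is odd, I would reduce to the even case by bordering. Since $m+1$ is even, I first observe that the augmentation does not change the sum of maximum minors, i.e.\ $\sigma(\widetilde Z)=\sigma(Z)$. Indeed, the zeroth row of $\widetilde Z$ is $(1,0,\ldots,0)$, so in the Laplace expansion of any maximal $(m+1)\times(m+1)$ minor of $\widetilde Z$ along this row, the minor vanishes unless the chosen column set $J'$ contains the index $0$; and when $0\in J'$ the column $0$ of $\widetilde Z$ is $(1,0,\ldots,0)^T$, so the minor factors as $1$ times the maximal minor of $Z$ on the columns $J'\setminus\{0\}$, with no extra sign. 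Summing over $J'$ gives $\sigma(\widetilde Z)=\sigma(Z)$. Applying part (b) to the $(m+1)\times(n+1)$ matrix $\widetilde Z$ with the skew-symmetric matrix $E_{n+1}$ (using again that the relevant principal Pfaffians of $E_{n+1}$ equal $1$) then gives $\sigma(Z)=\sigma(\widetilde Z)=\Pf_{m+1}[\widetilde Z E_{n+1}\widetilde Z^T]$, as claimed.

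The main obstacle is really packaged inside the minor-summation formula itself; once that Pfaffian Cauchy--Binet identity is granted, everything else is elementary bookkeeping. If one wanted a self-contained argument one would have to prove that formula directly---for instance by expanding $\Pf(ZAZ^T)$ through the definition \eqref{eba} and regrouping the index tuples into column subsets in the Cauchy--Binet manner, or via an exterior-algebra computation---and the delicate point there is tracking the crossing-number signs so that they assemble into the $\det(Z_{[\ell],J})$ factors. The two genuinely case-specific ingredients, the evaluation $\Pf(E_\ell)=1$ and the sign-free bordering identity $\sigma(\widetilde Z)=\sigma(Z)$, are both dispatched by the short arguments above.
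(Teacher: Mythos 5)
Your proposal is correct, and it is worth noting at the outset that the paper itself offers no proof of this statement: Theorem~\ref{tbc} is imported verbatim from Okada \cite{O1}, restated in the form given by Ishikawa and Wakayama \cite{IW}. Your derivation is exactly the argument that this Ishikawa--Wakayama formulation invites. The three ingredients all check out: the principal submatrix $(E_n)_{J,J}$ on an increasing index set is indeed a copy of $E_\ell$, the induction $\Pf(E_\ell)=\sum_{j=2}^{\ell}(-1)^j=1$ for even $\ell$ is sound, and the bordering identity $\sigma(\widetilde Z)=\sigma(Z)$ follows as you say because a maximal minor of $\widetilde Z$ vanishes unless column $0$ is selected, in which case expanding along the top row (where $0$ is necessarily the first column of the chosen set, so no sign appears) strips off the border. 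The one honest caveat, which you flag yourself, is that the entire weight of the proof rests on the minor-summation formula $\Pf[ZAZ^T]=\sum_{J}\det(Z_{[\ell],J})\Pf(A_{J,J})$, which you invoke as a black box; granted that identity, the rest is elementary. Okada's original proof in \cite{O1} proceeds differently (without this general Pfaffian Cauchy--Binet identity), so your route is not a reconstruction of his argument but rather the now-standard streamlined derivation; it has the advantage of handling the odd case by a two-line reduction to the even case rather than by a separate computation.
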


We now state and prove a formula for the square of the sum of maximum minors $\sigma(Z)^2$, which does not depend on the parity of $m$. The proof is based on Theorem \ref{tbc} and Lemma \ref{tbb}.

\begin{thm}\label{tbd}
    For any $m\times n$ matrix $Z$ with $m\leq n$, the square of the sum of maximum minors of $Z$, $\sigma(Z)^2$, satisfies the following.
    \begin{equation}\label{ebh}
        \sigma(Z)^2=\det[ZU_nZ^{T}]=\det[ZU_n^{T}Z^{T}],
    \end{equation}
    where $U_{n}=[u_{i,j}]_{1\leq i,j\leq n}$ is the upper triangular matrix defined by $u_{i,j}=
    \begin{cases}
    2, & \text{if $i<j$}\\
    1, & \text{if $i=j$}\\    
    0, & \text{if $i>j$}
    \end{cases}$.
\end{thm}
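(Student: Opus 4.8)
The plan is to combine Okada's Pfaffian formula (Theorem \ref{tbc}) with the determinant invariance furnished by Lemma \ref{tbb}, built on the single algebraic observation that $U_n$ and $U_n^T$ are obtained from the skew-symmetric matrices $E_n$ and $-E_n$ by adding $1$ to every entry. Writing $J$ for the all-ones matrix and $\mathbf{1}$ for the all-ones column vector, and recalling the notation $A(x)=[a_{i,j}+x]$ from the lemma, one has $E_n(1)=E_n+J=U_n$ and $(-E_n)(1)=-E_n+J=U_n^T$; note also $ZJZ^T=ww^T$ where $w=Z\mathbf{1}$ is the row-sum vector of $Z$, which is exactly the rank-one perturbation tracked by \eqref{ebd}. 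Since Okada's formula is stated according to the parity of $m$, I would split into two cases, in each case squaring the Pfaffian via $\Pf(A)^2=\det(A)$ (Proposition \ref{tba}(a)) and then using Lemma \ref{tbb} to pass from $E$ to $U$ by setting $x=1$.

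When $m$ is even, Theorem \ref{tbc}(b) gives $\sigma(Z)=\Pf_m[ZE_nZ^T]$, so $\sigma(Z)^2=\det[ZE_nZ^T]$. Applying the $k=0$ case \eqref{ebc} of Lemma \ref{tbb} with $A=E_n$ and specializing $x=1$ yields $\det[ZE_nZ^T]=\det[ZE_n(1)Z^T]=\det[ZU_nZ^T]$. The second equality is obtained the same way starting from $A=-E_n$: because $m$ is even, $\det[Z(-E_n)Z^T]=(-1)^m\det[ZE_nZ^T]=\det[ZE_nZ^T]$, and $(-E_n)(1)=U_n^T$ then gives $\det[ZU_n^TZ^T]=\sigma(Z)^2$.

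When $m$ is odd, Theorem \ref{tbc}(a) gives $\sigma(Z)=\Pf_{m+1}[\widetilde{Z}E_{n+1}\widetilde{Z}^T]$, hence $\sigma(Z)^2=\det[\widetilde{Z}E_{n+1}\widetilde{Z}^T]$. Here $\widetilde{Z}$ has the block form $\left[\begin{smallmatrix}1&0\\0&Z\end{smallmatrix}\right]$ and $m+1$ is even, so the $k=0$ case of Lemma \ref{tbb} applies to the $(m+1)\times(m+1)$ matrix $\widetilde{Z}E_{n+1}\widetilde{Z}^T$. Setting $x=1$ and using that the lower-right $n\times n$ block of $E_{n+1}(1)=E_{n+1}+J$ is exactly $U_n$, a direct block computation gives $\widetilde{Z}E_{n+1}(1)\widetilde{Z}^T=\left[\begin{smallmatrix}1&2w^T\\0&ZU_nZ^T\end{smallmatrix}\right]$. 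This matrix is block upper-triangular with a $1$ in the top-left corner, so its determinant equals $\det[ZU_nZ^T]$, whence $\sigma(Z)^2=\det[ZU_nZ^T]$. Replacing $E_{n+1}$ by $-E_{n+1}$ (legitimate since $\det[\widetilde{Z}(-E_{n+1})\widetilde{Z}^T]=(-1)^{m+1}\det[\widetilde{Z}E_{n+1}\widetilde{Z}^T]=\sigma(Z)^2$ as $m+1$ is even) and repeating with $(-E_{n+1})(1)$, whose lower-right block is $U_n^T$, produces the block lower-triangular matrix $\left[\begin{smallmatrix}1&0\\2w&ZU_n^TZ^T\end{smallmatrix}\right]$ and hence $\sigma(Z)^2=\det[ZU_n^TZ^T]$.

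The crux of the argument is the recognition that the rank-one perturbation hidden in Lemma \ref{tbb} is precisely the all-ones perturbation sending $E_n$ to $U_n$, so that the distinguished value $x=1$ converts Okada's skew-symmetric matrices into the required (non-skew) matrices $U_n$ and $U_n^T$ without changing the determinant. The main bookkeeping obstacle is the odd case: one must verify that augmenting $Z$ to $\widetilde{Z}$ and then specializing $x=1$ renders the bordered $(m+1)\times(m+1)$ matrix block-triangular, so that the extra border row and column contribute only a harmless factor of $1$ and the determinant collapses to the desired $m\times m$ determinant. The parity split is unavoidable here because Okada's formula itself takes two different shapes according to the parity of $m$, yet the resulting formula for $\sigma(Z)^2$ is uniform.
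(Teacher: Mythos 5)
Your proof is correct and follows essentially the same route as the paper: split on the parity of $m$, square Okada's Pfaffian via $\Pf(A)^2=\det(A)$, apply Lemma \ref{tbb} at $x=1$ to convert $E_n$ (resp.\ $E_{n+1}$) into $U_n$ (resp.\ $U_{n+1}$), and in the odd case collapse the bordered $(m+1)\times(m+1)$ matrix, whose block structure you compute exactly as in \eqref{ebjk}. The only difference is cosmetic: for the second equality you rerun the whole argument with $-E$ in place of $E$, whereas the paper simply notes that $\det[ZU_n^TZ^T]=\det[(ZU_nZ^T)^T]=\det[ZU_nZ^T]$ by transposition.
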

\begin{proof}
We first consider the case when $m$ is even. By Proposition \ref{tba}, Lemma \ref{tbb}, and \eqref{ebg}, 
\begin{equation}\label{ebi}
    \sigma(Z)^2=\Pf_{m}[ZE_nZ^T]^2=\det[ZE_nZ^T]=\det[ZE_n(1)Z^T]=\det[ZU_nZ^T].
\end{equation}
This completes the proof when $m$ is even. If $m$ is odd, by Proposition \ref{tba}, Lemma \ref{tbb}, and \eqref{ebf},
\begin{equation}\label{ebj}
    \sigma(Z)^2=\Pf_{m+1}[\widetilde{Z}E_{n+1}\widetilde{Z}^T]^2=\det[\widetilde{Z}E_{n+1}\widetilde{Z}^T]=\det[\widetilde{Z}E_{n+1}(1)\widetilde{Z}^T]=\det[\widetilde{Z}U_{n+1}\widetilde{Z}^T].
\end{equation}
Note that $\widetilde{Z}U_{n+1}\widetilde{Z}^T$ is an $(m+1)\times(m+1)$ matrix and for integer $i$, $j$ such that $0\leq i,j\leq m$, its $(i,j)$-entry is
\begin{equation}\label{ebjk}
    [\widetilde{Z}U_{n+1}\widetilde{Z}^T]_{i,j}=
    \begin{cases}
    1, & \text{if $i=j=0$}\\
    0, & \text{if $i>0$ and $j=0$}\\    
    2\sum_{k=1}^{n}z_{j,k}, & \text{if $i=0$ and $j>0$}\\
    [ZU_{n}Z^T]_{i,j}, & \text{otherwise}
    \end{cases}.
\end{equation}
Hence, applying the Laplace expansion along the leftmost column to $\widetilde{Z}U_{n+1}\widetilde{Z}^T$, we get
\begin{equation}\label{ebl}
    \det[\widetilde{Z}U_{n+1}\widetilde{Z}^T]=\det[ZU_{n}Z^T].
\end{equation}
Combining \eqref{ebj} and \eqref{ebl}, we get
\begin{equation}\label{ebm}
    \sigma(Z)^2=\det[ZU_{n}Z^T]
\end{equation}
and this completes the proof for the case when $m$ is odd.
\end{proof}

Using Theorem \ref{tbd}, we give a proof of the main theorem, Theorem \ref{tac}.

\begin{proof}[Proof of Theorem \ref{tac}]
    From Theorem \ref{tbd}, we have
    \begin{equation}\label{ebn}
        \det\Big[\M(\mathbf{u},\mathbf{v})U_{n}\M(\mathbf{u},\mathbf{v})^T\Big]=\sigma(\M(\mathbf{u},\mathbf{v}))^2.
    \end{equation}
    Also, by its definition,
    \begin{equation}\label{ebo}
        \sigma(\M(\mathbf{u},\mathbf{v}))=\sum_{1\leq j_1<\ldots<j_{m}\leq n}\det\M(\mathbf{u},\mathbf{v})_{[m],\{j_1,\ldots,j_m\}}
    \end{equation}
    is the sum of maximum minors of the matrix $\M(\mathbf{u},\mathbf{v})$.
    
    Since $(i,k)$-th entry of the matrix $\M(\mathbf{u},\mathbf{v})_{[m],\{j_1,\ldots,j_m\}}$ is $\GF[\mathscr{P}(u_i,v_{j_k})]$, by Lindstr\"{o}m--Gessel--Viennot theorem (Theorem \ref{taa}),
    \begin{equation}\label{ebp}
        \det\M(\mathbf{u},\mathbf{v})_{[m],\{j_1,\ldots,j_m\}}=\sum_{\sigma\in S_m}\sgn(\sigma)\GF[\mathscr{P}_{0}(\mathbf{u}_{\sigma},(v_{j_1},\ldots,v_{j_{m}}))].
    \end{equation}
    Since
    \begin{equation}\label{ebq}
        \GF[\mathscr{P}_0(\mathbf{u}_{\sigma}, \mathbf{v})]=\sum_{1\leq j_1<\ldots<j_{m}\leq n}\GF[\mathscr{P}_{0}(\mathbf{u}_{\sigma},(v_{j_1},\ldots,v_{j_{m}}))],
    \end{equation}
    combining \eqref{ebn}-\eqref{ebq}, we get
    \begin{equation}\label{ebr}
        \Bigg[\sum_{\sigma\in S_m}\sgn(\sigma)\GF[\mathscr{P}_0(\mathbf{u}_{\sigma},\mathbf{v})]\Bigg]^2=\det\Big[\M(\mathbf{u},\mathbf{v})U_{n}\M(\mathbf{u},\mathbf{v})^T\Big].
    \end{equation}
    The second equality in \eqref{eag} holds because determinants are invariant under taking the transpose. \eqref{eah} is also true because if $\mathbf{u}$ and $\mathbf{v}$ are compatible, then $\mathscr{P}_0(\mathbf{u}_{\sigma},\mathbf{v})$ vanishes unless $\sigma$ is the identity permutation. This completes the proof.
\end{proof}

A combinatorial interpretation of Theorem \ref{tac} gives a reflection principle for nonintersecting paths. As in Theorem \ref{tac}, consider a locally finite and acyclic directed graph $G$ and an $m$-tuple $\mathbf{u}=(u_1,\ldots,u_m)$ and an $n$-tuple $\mathbf{v}=(v_1,\ldots,v_n)$ of vertices on $G$, where $m\leq n$. Assume further that the vertices $v_1,\ldots,v_n$ are sinks (that is, there are no outgoing edges from these $n$ vertices in $G$) and these $n$ vertices are on the infinite face of $G$ in a cyclic order. We then choose a line $l$ that does not intersect with $G$ and reflect $G$ across $l$. Let $G'$ be the mirror image of $G$ under the reflection. For every vertex $x$ in $G$, let $x'$ in $G'$ be the mirror image of $x$ and for every edge $e$ in $G$ directed from a vertex $x$ to a vertex $y$, let $e'$ in $G'$ be its mirror image that is directed from $y'$ to $x'$ and have the same weight as $e$ (that is, $\wt(e)=\wt(e')$). Also, denote the mirror images of an $m$-tuple $\mathbf{u}=(u_1,\ldots,u_m)$ and an $n$-tuple $\mathbf{v}=(v_1,\ldots,v_n)$ by $\mathbf{u'}=(u_1',\ldots,u_m')$ and $\mathbf{v'}=(v_1',\ldots,v_n')$, respectively. We denote the union of $G$ and $G'$ by $G_{sym}$. Note that $G_{sym}$ has two components, $G$ and $G'$. We connect the two components by adding $(3n-2)$ edges between vertices in $\mathbf{v}$ and $\mathbf{v'}$ with unit weight on them in two different ways and construct two graphs $\overline{G_{sym}}$ and $\widetilde{G_{sym}}$. To construct $\overline{G_{sym}}$, from $G_{sym}$, we add
\begin{enumerate}
    \item $n$ edges connecting $v_i$ and $v_i'$ for $i\in[n]$, directed toward $v_i'$,
    \item $(n-1)$ edges connecting $v_i$ and $v_{i+1}'$ for $i\in[n-1]$, directed toward $v_{i+1}'$, and
    \item $(n-1)$ edges connecting $v_i'$ and $v_{i+1}'$ for $i\in[n-1]$, directed toward $v_{i+1}'$
\end{enumerate}
(see the picture on the top in Figure \ref{fba} for an example). We draw these edges so they do not cross other edges and denote the resulting graph by $\overline{G_{sym}}$. If we instead add
\begin{enumerate}
    \item[$(1)'$] $n$ edges connecting $v_i$ and $v_i'$ for $i\in[n]$, directed toward $v_i'$,
    \item[$(2)'$] $(n-1)$ edges connecting $v_{i+1}$ and $v_{i}'$ for $i\in[n-1]$, directed toward $v_{i}'$, and
    \item[$(3)'$] $(n-1)$ edges connecting $v_{i+1}'$ and $v_{i}'$ for $i\in[n-1]$, directed toward $v_{i}'$,
\end{enumerate}
from $G_{sym}$, then the resulting graph is $\widetilde{G_{sym}}$ (for instance, see the bottom picture in Figure \ref{fba}). Note that by their construction and the two assumptions (the vertices in $\mathbf{v}$ are sink of $G$ and they are on the infinite face of $G$ in a cyclic order), both $\overline{G_{sym}}$ and $\widetilde{G_{sym}}$ are also locally finite and acyclic directed graphs.

For any permutation $\sigma\in S_m$, let $\overline{\mathscr{P}_{0}}(\mathbf{u}_{\sigma},\mathbf{u'})$ (or $\widetilde{\mathscr{P}_{0}}(\mathbf{u}_{\sigma},\mathbf{u'})$) be the sets of $m$-tuples of nonintersecting paths $(\overline{P_1},\ldots,\overline{P_m})$ in $\overline{G_{sym}}$ such that $\overline{P_i}\in\mathscr{P}(u_{\sigma(i)},u_i')$ for $i\in[m]$ (or $(\widetilde{P_1},\ldots,\widetilde{P_m})$ in $\widetilde{G_{sym}}$ such that $\widetilde{P_i}\in\mathscr{P}(u_{\sigma(i)},u_i')$ for $i\in[m]$). Also when $\sigma$ is an identity permutation, we write $\overline{\mathscr{P}_{0}}(\mathbf{u},\mathbf{u'})$ (or $\widetilde{\mathscr{P}_{0}}(\mathbf{u},\mathbf{u'})$) instead of $\overline{\mathscr{P}_{0}}(\mathbf{u}_{id},\mathbf{u'})$ (or $\widetilde{\mathscr{P}_{0}}(\mathbf{u}_{id},\mathbf{u'})$).

\begin{figure}
    \centering
    \includegraphics[width=0.6\textwidth]{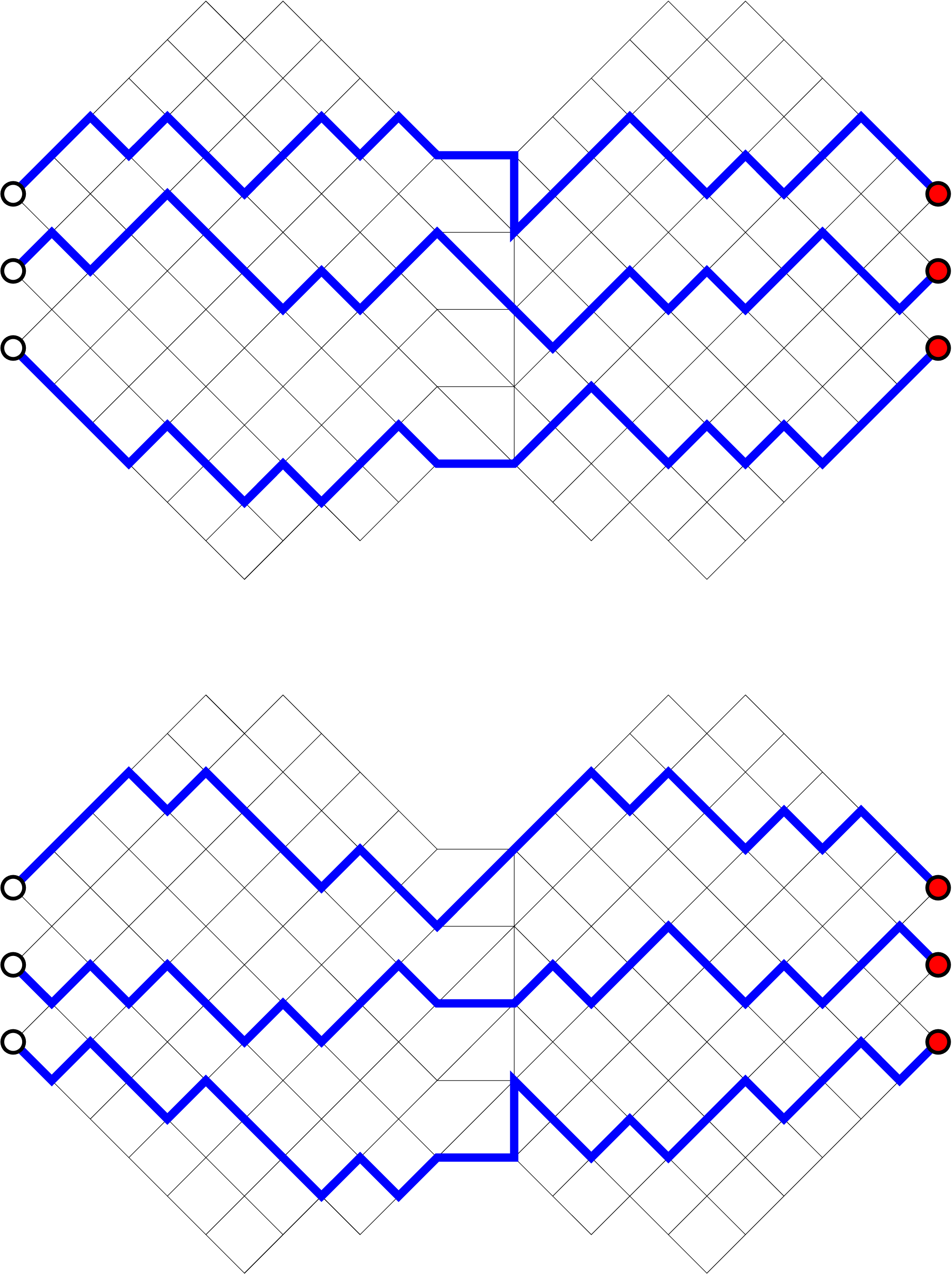}
    \caption{Two graphs with three starting points and three ending points, and three nonintersecting paths on them. In the pictures, the starting and ending points are marked by empty and filled circles, respectively. While all the nonvertical edges are directed from left to right in both pictures, vertical edges are oriented \textit{downward} in the top picture and \textit{upward} in the picture below, respectively. These are examples of $\overline{G_{sym}}$ (top) and $\widetilde{G_{sym}}$ (bottom) appearing in Theorem \ref{tbe}, where $G$ is the right picture in Figure \ref{faa}.}
    \label{fba}
\end{figure}

\begin{thm}[A reflection principle for nonintersecting paths]\label{tbe}
Let $G$, $\mathbf{u}=(u_1,\ldots,u_m)$, and $\mathbf{v}=(v_1,\ldots,v_n)$ be the same as in Theorem \ref{tac}. Assume further that the vertices $v_1,\ldots,v_n$ are sink of $G$ and are on the infinite face of $G$ in a cyclic order so that we can construct $\overline{G_{sym}}$ and $\widetilde{G_{sym}}$ as described in the previous paragraph. Then,
\begin{equation}\label{ebs}
        \Bigg[\sum_{\sigma\in S_m}\sgn(\sigma)\GF[\mathscr{P}_0(\mathbf{u}_{\sigma},\mathbf{v})]\Bigg]^2=\sum_{\sigma\in S_m}\sgn(\sigma)\GF[\overline{\mathscr{P}_0}(\mathbf{u}_{\sigma},\mathbf{u}')]=\sum_{\sigma\in S_m}\sgn(\sigma)\GF[\widetilde{\mathscr{P}_0}(\mathbf{u}_{\sigma},\mathbf{u}')].
    \end{equation}

In particular, if $\mathbf{u}$ and $\mathbf{v}$ are compatible, then
    \begin{equation}\label{ebt}
        \GF[\mathscr{P}_{0}(\mathbf{u},\mathbf{v})]^2=\GF[\overline{\mathscr{P}_{0}}(\mathbf{u},\mathbf{u'})]=\GF[\widetilde{\mathscr{P}_{0}}(\mathbf{u},\mathbf{u'})].
    \end{equation}
\end{thm}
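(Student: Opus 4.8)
The plan is to deduce Theorem \ref{tbe} from the Lindstr\"{o}m--Gessel--Viennot theorem (Theorem \ref{taa}) applied on the symmetrized graphs $\overline{G_{sym}}$ and $\widetilde{G_{sym}}$, combined with the algebraic identity already proved in Theorem \ref{tac}. The central point is that the path matrix from $\mathbf{u}$ to $\mathbf{u}'$ on $\overline{G_{sym}}$ factors exactly as $\M(\mathbf{u},\mathbf{v})\,U_n\,\M(\mathbf{u},\mathbf{v})^T$ (and as $\M(\mathbf{u},\mathbf{v})\,U_n^T\,\M(\mathbf{u},\mathbf{v})^T$ on $\widetilde{G_{sym}}$), so that Theorem \ref{taa} turns the determinants appearing in \eqref{eag} into the alternating path sums on the right of \eqref{ebs}.

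First I would analyze a path in $\overline{G_{sym}}$ from a source $u_i$ to a sink $u_j'$. Because the $v_k$ are sinks of $G$ and the only edges leaving $G$ are the connecting edges, all oriented into $G'$, every such path splits uniquely as a path in $G$ from $u_i$ to some $v_s$, then a path using only connecting edges from $v_s$ to some $v_t'$, then a path in $G'$ from $v_t'$ to $u_j'$; moreover a path in $G'$ can never return to a connecting edge since the $v_k'$ are sources of $G'$. As $G'$ is the mirror image of $G$ with reversed edges of equal weight, the reflection is a weight-preserving bijection, giving $\GF[\mathscr{P}_{G'}(v_t',u_j')]=\GF[\mathscr{P}(u_j,v_t)]$. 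Summing over $s,t$ yields
\[
\GF[\mathscr{P}_{\overline{G_{sym}}}(u_i,u_j')]=\sum_{s,t}\GF[\mathscr{P}(u_i,v_s)]\,c_{s,t}\,\GF[\mathscr{P}(u_j,v_t)],
\]
so the path matrix equals $\M(\mathbf{u},\mathbf{v})\,C\,\M(\mathbf{u},\mathbf{v})^T$, where $c_{s,t}$ is the weight-sum of connecting paths from $v_s$ to $v_t'$. A short case check on the three families of connecting edges shows $c_{s,t}=0$ for $t<s$ (no such path), $c_{s,s}=1$ (only $v_s\to v_s'$), and $c_{s,t}=2$ for $t>s$ (either $v_s\to v_s'$ or $v_s\to v_{s+1}'$, followed in each case by the forced chain of vertical edges up to $v_t'$), i.e.\ $C=U_n$. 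The mirror construction $\widetilde{G_{sym}}$ reverses the vertical edges and gives $C=U_n^T$.

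With the factorization in hand, the general form \eqref{eab} of Theorem \ref{taa}, applied to $\overline{G_{sym}}$ with starting $m$-tuple $\mathbf{u}$ and ending $m$-tuple $\mathbf{u}'$, gives $\sum_{\sigma}\sgn(\sigma)\GF[\overline{\mathscr{P}_0}(\mathbf{u}_\sigma,\mathbf{u}')]=\det[\M(\mathbf{u},\mathbf{v})U_n\M(\mathbf{u},\mathbf{v})^T]$, and likewise for $\widetilde{G_{sym}}$ with $U_n^T$. Theorem \ref{tac} identifies both determinants with the squared alternating sum on the left of \eqref{ebs}, proving \eqref{ebs}.

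Finally, for the compatible case \eqref{ebt} I would show that $\mathbf{u}$ and $\mathbf{u}'$ are themselves compatible on $\overline{G_{sym}}$ and $\widetilde{G_{sym}}$, so that each alternating sum collapses to its identity term. I expect this to be the main obstacle, as it is the only genuinely geometric step. Given $i<j$ and $k<l$, take $P:u_i\to u_l'$ and $Q:u_j\to u_k'$ and decompose each into its $G$-part, connecting part, and $G'$-part; since these three regions are disjoint in the plane, $P$ and $Q$ are disjoint only if they are disjoint within each region. Compatibility of $\mathbf{u},\mathbf{v}$ in $G$ forces the $G$-endpoints to satisfy $a_P<a_Q$ (else the $G$-parts meet), and through the reflection it forces the $G'$-endpoints to satisfy $b_Q<b_P$; together with the constraint $s\le t$ built into the connecting edges of $\overline{G_{sym}}$ this gives $a_P<a_Q\le b_Q<b_P$. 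The two connecting paths then both pass through the vertex $v_{b_Q}'$ along the chain of vertical edges, forcing an intersection and contradicting disjointness. Hence $P$ and $Q$ always meet, $\mathbf{u},\mathbf{u}'$ are compatible, and \eqref{ebt} follows; the argument for $\widetilde{G_{sym}}$ is the mirror image, using $t\le s$ and the shared vertex $v_{b_P}'$.
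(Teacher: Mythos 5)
Your proposal is correct and follows essentially the same route as the paper: you factor the path matrix $\M(\mathbf{u},\mathbf{u}')$ on $\overline{G_{sym}}$ (resp.\ $\widetilde{G_{sym}}$) as $\M(\mathbf{u},\mathbf{v})U_n\M(\mathbf{u},\mathbf{v})^T$ (resp.\ with $U_n^T$) via the unique $G$/connecting/$G'$ decomposition of paths, apply the Lindstr\"{o}m--Gessel--Viennot theorem, and invoke Theorem \ref{tac}, exactly as in the paper's proof. Your explicit verification that $\mathbf{u}$ and $\mathbf{u}'$ are compatible is a welcome elaboration of the step the paper only sketches, and it checks out.
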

\begin{proof}
We first prove \eqref{ebs}. By Theorem \ref{tac}, it suffices to show 
\begin{equation}\label{ebu}
    \sum_{\sigma\in S_m}\sgn(\sigma)\GF[\overline{\mathscr{P}_0}(\mathbf{u}_{\sigma},\mathbf{u}')]=\det\Big[\M(\mathbf{u},\mathbf{v})U_{n}\M(\mathbf{u},\mathbf{v})^T\Big]
\end{equation}
and
\begin{equation}\label{ebv}
    \sum_{\sigma\in S_m}\sgn(\sigma)\GF[\widetilde{\mathscr{P}_0}(\mathbf{u}_{\sigma},\mathbf{u}')]=\det\Big[\M(\mathbf{u},\mathbf{v})U_{n}^T\M(\mathbf{u},\mathbf{v})^T\Big].
\end{equation}
We first prove \eqref{ebu}. Notice that when the underlying graph is $\overline{G_{sym}}$, we have $U_{n}=\M(\mathbf{v},\mathbf{v}')$ and $\M(\mathbf{v}',\mathbf{u}')=\M(\mathbf{u},\mathbf{v})^T$. The former is true because 
\begin{equation*}
    \mathscr{P}(v_i,v_j')=
\begin{cases}
    2, & \text{if $i<j$: two paths, $v_i\rightarrow v_i'\rightarrow\cdots\rightarrow v_j'$ and $v_i\rightarrow v_{i+1}'\rightarrow\cdots\rightarrow v_j'$}\\
    1, & \text{if $i=j$: one path, $v_i\rightarrow v_i'$}\\    
    0, & \text{if $i>j$: no path from $v_i$ to $v_j'$}
\end{cases}
\end{equation*}
and the latter is true because $\mathbf{v}'$ and $\mathbf{u}'$ in $G'$ are mirror images of $\mathbf{v}$ and $\mathbf{u}$ in $G$, respectively. Thus,
\begin{align*}
    \det\Big[\M(\mathbf{u},\mathbf{v})U_{n}\M(\mathbf{u},\mathbf{v})^T\Big]=\det\Big[\M(\mathbf{u},\mathbf{v})\M(\mathbf{v},\mathbf{v}')\M(\mathbf{v}',\mathbf{u}')\Big]&=\det\big[\M(\mathbf{u},\mathbf{u}')\big]\\
    &=\sum_{\sigma\in S_m}\sgn(\sigma)\GF[\overline{\mathscr{P}_0}(\mathbf{u}_{\sigma},\mathbf{u}')],
\end{align*}
where the last equality is due to the Lindstr\"{o}m--Gessel--Viennot theorem (Theorem \ref{taa}). The proof of \eqref{ebv} is almost the same as that of \eqref{ebu}. The only difference is, since the underlying graph is $\widetilde{G_{sym}}$, we have $U_{n}^{T}=\M(\mathbf{v},\mathbf{v}')$ because
\begin{equation*}
    \mathscr{P}(v_i,v_j')=
\begin{cases}
    0, & \text{if $i<j$: no path from $v_i$ to $v_j'$}\\
    1, & \text{if $i=j$: one path, $v_i\rightarrow v_i'$}\\    
    2, & \text{if $i>j$: two paths, $v_i\rightarrow v_i'\rightarrow\cdots\rightarrow v_j'$ and $v_i\rightarrow v_{i-1}'\rightarrow\cdots\rightarrow v_j'$}
\end{cases}
\end{equation*}
and the rest of the proof is the same as that of \eqref{ebu}.

When $\mathbf{u}$ and $\mathbf{v}$ are compatible, regardless of the underlying graphs (either $\overline{G_{sym}}$ or $\widetilde{G_{sym}}$), it suffices show that $\mathbf{u}$ and $\mathbf{u}'$ are compatible. If we show that $\mathbf{u}$ and $\mathbf{u}'$ are compatible, then \eqref{ebt} follows from \eqref{ebs}. 
When $\mathbf{u}$ and $\mathbf{v}$ compatible, notice that $\mathbf{v'}$ and $\mathbf{u'}$ are also compatible by their construction (recall that $\mathbf{v'}$ and $\mathbf{u'}$ are mirror images of $\mathbf{v}$ and $\mathbf{u}$, respectively). Also, again by construction, $\mathbf{v}$ and $\mathbf{v'}$ are compatible (in both $\overline{G_{sym}}$ and $\widetilde{G_{sym}}$) and thus we can deduce from it that $\mathbf{u}$ and $\mathbf{u'}$ are compatible. This completes the proof of \eqref{ebt}.
\end{proof}

\section{Enumeration of Lozenge tilings of regions with free boundaries}\label{section3}

Consider a triangular lattice whose one family of its lattice lines is vertical. A \textit{lozenge} is a union of two adjacent unit triangles in the lattice. Given a region $R$ on the lattice, a \textit{lozenge tiling} of the region is a collection of lozenges (in the region) that covers the region without gaps and overlaps. If every lozenge $l$ is weighted by $\wt(l)$ and a lozenge tiling of the region is given, the \textit{weight of the tiling} is the product of the weights of all lozenges that constitute the tiling. A \textit{tiling generating function} of the region $R$, denoted by $\M_{\wt}(R)$ (or just $\M(R)$ if the weight function is clear from the context), is the sum of weights of all lozenge tilings of $R$. Note that if all lozenges are weighted by $1$, then $\M(R)$ gives the number of lozenge tilings of $R$. Throughout this paper, each lozenge is assigned weight $1$ unless its weight is defined separately.

In the previous paragraph, although it was not clearly presented, we assumed that all the boundaries of the region $R$ are \textit{solid} boundaries. This means that no lozenge is allowed to cross the boundary. A boundary of the region $R$ is \textit{free} if a lozenge is allowed to cross the boundary. We now define lozenge tilings for regions with free boundaries. Consider a region $R$ and assume that parts of its boundaries are free. In this case, a \textit{lozenge tiling of the region with free boundaries} is a collection of lozenges that covers the region without gaps and overlaps, where some of the lozenges are allowed to cross the part of $R$'s boundary that are free (see the left picture in Figure \ref{fcc}. There, a free boundary is denoted by a dashed line, while solid boundaries are denoted by solid lines). The definition of the tiling generating function of the regions with free boundaries is the same as before. To emphasize the existence of free boundaries, if the region $R$ has free boundaries, we use $\M_f(R)$ (instead of $\M(R)$) to denote its tiling generation function (or the number of its lozenge tilings if all lozenges are weighted by $1$). 

Enumerating the lozenge tilings of regions with free boundaries is much harder than enumerating those without free boundaries. The challenge is, while several methods, including Lindstr\"{o}m--Gessel--Viennot theorem (Theorem \ref{taa}) and Kuo's graphical condensation method \cite{Kuo}, can be used to deal with the regions with no free boundary, one cannot apply most of the techniques if free boundaries exist. When there are free boundaries, it is known that Okada and Stembridge's Pfaffian formula (Theorem \ref{tab}) can be used to deal with this constraint. However, while various techniques for evaluating the Lindstr\"{o}m--Gessel--Viennot type determinants have been developed (for example, see \cite{Kra1} and \cite{Kra2}), evaluating Okada--Stembridge type Pfaffian is less studied and relatively hard. It is worth mentioning that Ciucu \cite{C6} developed a technique that allows one to solve free boundary lozenge tiling problems inductively (by generalizing Kuo's graphical condensation method \cite{Kuo}). However, fewer techniques are still available when free boundaries exist. The main theorem of this section, Theorem \ref{tca}, states that the enumeration of lozenge tilings of a large family of regions with free boundaries can be resolved by considering the enumeration of lozenge tilings of regions without any free boundary instead.

A \textit{partition} $\lambda=(\lambda_1,\ldots,\lambda_k)$ is a sequence\footnote{If $k=0$, then $\lambda$ is an empty sequence.} of weakly decreasing positive integers. A partition is \textit{strict} if it is a sequence of strictly decreasing positive integers. In this paper, a strict partition is denoted by $\lambda_{st}$.

Before we describe the regions of our interest, we first define a $\wedge$-hook and a shifted $\wedge$-hook. For any positive integer $n$, a \textit{$\wedge$-hook of order $n$} is a $\wedge$-shaped hook that consists of $2n$ unit lozenges as described in the left picture in Figure \ref{fca}. A \textit{shifted $\wedge$-hook of order $n$} is also a $\wedge$-shaped hook that is obtained from the $\wedge$-hook of order $n$ by shifting the left-most unit triangle to the right end of it (see the right picture in Figure \ref{fca}).

\begin{figure}
    \centering
    \includegraphics[width=.8\textwidth]{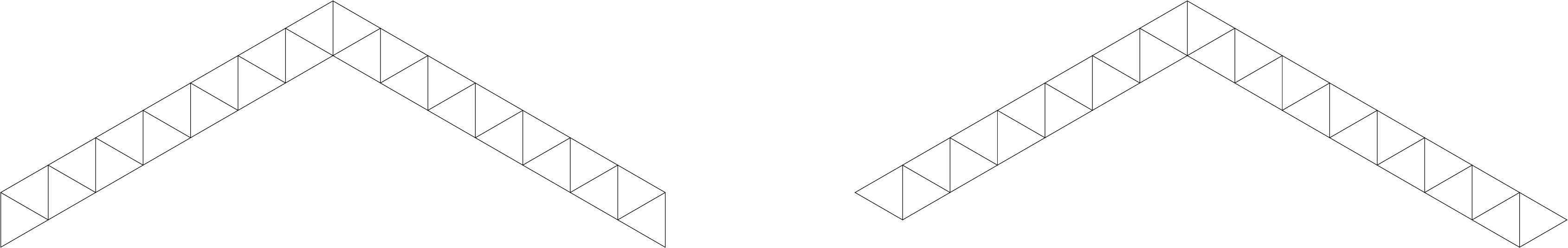}
    \caption{The $\wedge$-hook of order $7$ (left) and the shifted $\wedge$-hook of order $7$ (right).}
    \label{fca}
\end{figure}

In this section, we introduce a family of regions with a straight line free boundary, $A(m;\lambda_{st};I)$, and its counterpart region, $\widetilde{A}(m;\lambda_{st};I)$, that do not have any free boundary, where $m\in\mathbb{Z}_{\geq0}$, $\lambda_{st}$ is a strict partition with $k$ parts, and $I\subseteq[k]$. Using Theorem \ref{tac}, we show that $\M_f(A(m;\lambda_{st};I))$ and $\M(\widetilde{A}(m;\lambda_{st};I))$ satisfy a simple relation (see Theorem \ref{tca}). This result implies that to enumerate the number of lozenge tilings of $A(m;\lambda_{st};I)$, one can instead look at the tiling generating function of $\widetilde{A}(m;\lambda_{st};I)$, which do not have any free boundary.

\begin{figure}
    \centering
    \includegraphics[width=1.0\textwidth]{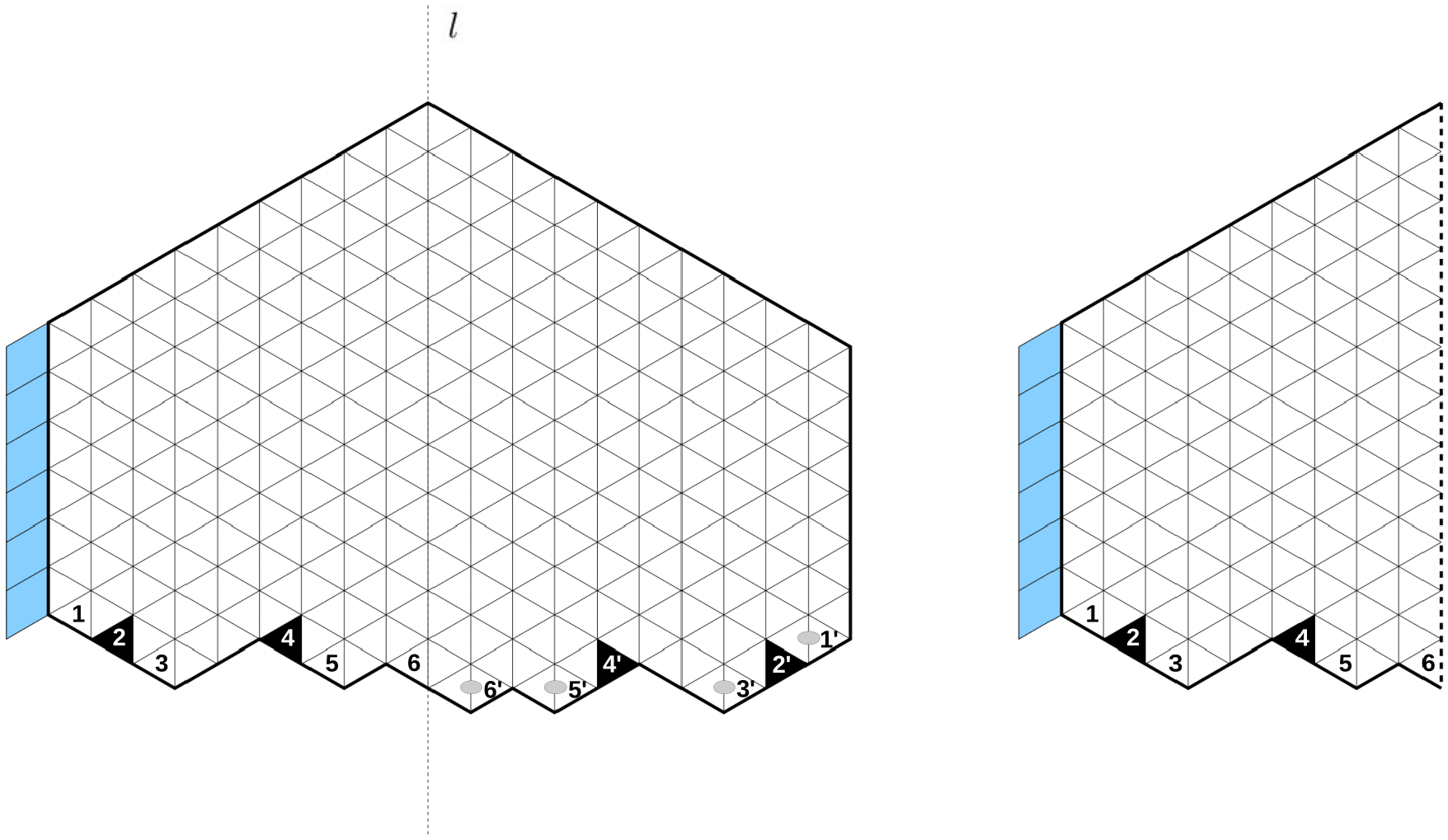}
    \caption{$\widetilde{A}(m;\lambda_{st};I)$ (left) and $A(m;\lambda_{st};I)$ (right) with $m=6$, $\lambda=(9,8,7,4,3,1)$, and $I=\{2,4\}$. The lozenges weighted by $\frac{1}{2}$ are denoted by shaded ellipses. In both pictures, the left-most strips (shaded) are not part of the regions.}
    \label{fcb}
\end{figure}

We first define the regions $A(m;\lambda_{st})$ and $\widetilde{A}(m;\lambda_{st})$, which are special cases of the regions $A(m;\lambda_{st};I)$ and $\widetilde{A}(m;\lambda_{st};I)$ when $I=\emptyset$. Let $m$ be a nonnegative integers and $\lambda_{st}=(\lambda_1,\ldots,\lambda_k)$ be a strict partition. We consider $m$ copies of $\wedge$-hook of order $(\lambda_1+1)$ and a copy of shifted $\wedge$-hook of order $\lambda_i$ for $i\in[k]$. Then we concatenate the $m$ copies of $\wedge$-hooks and $n$ copies of shifted $\wedge$-hooks in order from top to bottom along the common axis $\ell$, starting with the largest one in weakly decreasing order. Note that the leftmost strip of the resulting region is tiled in a unique way because of the acute angle at the corner (see the shaded lozenges in the left picture in Figure \ref{fcb}). Since getting rid of the strip from the region will not change the number of tilings, we delete it from the region. Then the resulting region is $\widetilde{A}(m;\lambda_{st})$ and the subregion left to $\ell$ is $A(m;\lambda_{st})$. We also assign weight to lozenges. Every lozenge is weighted by $1$, except the $k$ horizontal lozenges at the right end of $k$ shifted $\wedge$-hook of order $\lambda_i$ for $i\in[k]$. Throughout this paper, lozenges weighted by $\frac{1}{2}$ are denoted by shaded ellipses in all figures. Lastly, every boundary of these two regions is a solid boundary except the boundary of $A(m;\lambda_{st})$ along $\ell$, which is a free boundary. Throughout this paper, free boundaries are denoted by dashed lines. To define $A(m;\lambda_{st};I)$ and $\widetilde{A}(m;\lambda_{st};I)$ for any set $I\subseteq[k]$, we look at the $k$ shifted $\wedge$-hooks of order $\lambda_i$ for $i\in[k]$. For the shifted $\wedge$-hook of order $\lambda_i$, we label the leftmost (left-pointing) unit triangle and rightmost (right-pointing) unit triangle in it by $i$ and $i'$, respectively. Then, for any $I\subseteq[k]$, $A(m;\lambda_{st};I)$ is obtained from $A(m;\lambda_{st})$ by deleting unit triangles labeled by the elements of $I$. Similarly, $\widetilde{A}(m;\lambda_{st};I)$ is obtained from $\widetilde{A}(m;\lambda_{st})$ by deleting unit triangles labeled by $i$ and $i'$ for $i\in I$ (examples are shown in the two pictures in Figure \ref{fcb}). The main theorem of this section is the following.

\begin{thm}\label{tca}
For a nonnegative integer $m$, a strict partition $\lambda_{st}$ with $k$ parts, and a set $I\subseteq[k]$,
\begin{equation}\label{eca}
    \M_f(A(m;\lambda_{st};I))^2=2^{k-|I|}\M(\widetilde{A}(m;\lambda_{st};I)).
\end{equation}
\end{thm}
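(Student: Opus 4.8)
The plan is to reduce the statement to the reflection principle for nonintersecting paths (Theorem~\ref{tbe}) via the standard correspondence between lozenge tilings and families of nonintersecting paths. First I would fix one of the three lozenge orientations as the ``non-step'' orientation and read off, from each tiling of $A(m;\lambda_{st};I)$, a family of nonintersecting paths. The left boundary of $A$ together with the triangles deleted according to $I$ determines a tuple of fixed starting points $\mathbf{u}=(u_1,\dots,u_p)$, while the free boundary along $\ell$ forces the endpoints to lie on $\ell$ but leaves them \emph{unfixed}; they range over an $n$-tuple $\mathbf{v}=(v_1,\dots,v_n)$ of admissible lattice points on $\ell$ with $p\le n$. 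Carrying the half-weight $\tfrac12$ of the shifted-hook tip lozenges into the edge weights of the underlying graph $G$, the correspondence should give $\M_f(A(m;\lambda_{st};I))=\GF[\mathscr{P}_0(\mathbf{u},\mathbf{v})]$ once compatibility is verified.

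Next I would check the hypotheses of Theorem~\ref{tbe}. The vertices $v_1,\dots,v_n$ are sinks of $G$ and sit in cyclic order on the infinite face along $\ell$, so the construction of $\overline{G_{sym}}$ and $\widetilde{G_{sym}}$ applies; compatibility of $\mathbf{u}$ and $\mathbf{v}$ is the geometric fact that, on such a planar graph, a path from $u_i$ to a higher endpoint must meet a path from a lower $u_j$ to a lower endpoint. With compatibility in hand, \eqref{ebt} gives
\[
\M_f(A(m;\lambda_{st};I))^2=\GF[\overline{\mathscr{P}_0}(\mathbf{u},\mathbf{u}')],
\]
where I choose the almost-symmetrization ($\overline{G_{sym}}$ or $\widetilde{G_{sym}}$) whose connecting edges shift in the same direction as the shifted $\wedge$-hooks.

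The heart of the argument is to identify $\overline{G_{sym}}$ with the graph that encodes tilings of $\widetilde{A}(m;\lambda_{st};I)$, so that Theorem~\ref{taa} rewrites $\GF[\overline{\mathscr{P}_0}(\mathbf{u},\mathbf{u}')]$ as a weighted count of tilings of $\widetilde{A}$. Here the shift built into the connecting edges of types $(2)$ and $(3)$ is exactly what converts the symmetric $\wedge$-hooks produced by naive reflection across $\ell$ into the shifted $\wedge$-hooks of $\widetilde{A}$; this is the precise sense in which $\widetilde{A}$ is obtained from $A$ by ``almost'' symmetrizing. Establishing this geometric dictionary---matching vertices, edges, and orientations of $\overline{G_{sym}}$ with the triangular-lattice data of $\widetilde{A}$, and tracking the effect of the deletions indexed by $I$---is the first place that demands care.

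The remaining, and main, obstacle is the weight factor $2^{k-|I|}$, whose bookkeeping I expect to be the most delicate part. The key observation is that for each $i\notin I$ the tip triangle $i'$ of the $i$-th shifted $\wedge$-hook must be covered, and the only lozenge covering it is the half-weighted one; hence these $k-|I|$ tip lozenges are \emph{forced} in every tiling of $\widetilde{A}$, so that $\M(\widetilde{A}(m;\lambda_{st};I))$ carries an overall factor $(\tfrac12)^{k-|I|}$ relative to the count $N$ in which those lozenges are reweighted to $1$; that is, $2^{k-|I|}\M(\widetilde{A})=N$. It then remains to show $\GF[\overline{\mathscr{P}_0}(\mathbf{u},\mathbf{u}')]=N$, i.e.\ that the half-weights inherited from the two mirror copies of $A$, together with the unit weights on the connecting edges of $\overline{G_{sym}}$, reconcile to weight $1$ on precisely these $k-|I|$ forced positions. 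It is this forced-ness that makes the factor a clean power of $2$ rather than a weighted sum over subsets of endpoints: every surviving path family uses exactly the same $k-|I|$ half-weight positions, so the factor pulls out uniformly. Combining the three steps then yields $\M_f(A(m;\lambda_{st};I))^2=2^{k-|I|}\M(\widetilde{A}(m;\lambda_{st};I))$.
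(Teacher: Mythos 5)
Your reduction of $\M_f(A(m;\lambda_{st};I))$ to $\GF[\mathscr{P}_0(\mathbf{u},\mathbf{v})]$ matches the paper's first step, and routing the rest through Theorem \ref{tbe} rather than through Theorem \ref{tac} plus an explicit path-matrix computation is a legitimate variant of the same idea. The genuine problem is your treatment of the factor $2^{k-|I|}$. You assert that for each $i\notin I$ the half-weighted tip lozenge is \emph{forced} in every tiling of $\widetilde{A}(m;\lambda_{st};I)$, so that $2^{k-|I|}\M(\widetilde{A})$ equals a reweighted count $N$, and then that $\GF[\overline{\mathscr{P}_0}(\mathbf{u},\mathbf{u}')]=N$. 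The forcedness claim is false, and the paper's own computation shows why: in \eqref{ecg} the path matrix $\M(\mathbf{v}',\mathbf{u}'_I)$ is obtained by partitioning paths into $u_j'$ according to their last step, and there are \emph{two} terms,
\begin{equation*}
\frac12\binom{\lambda_j-1+m+j-i}{m+j-i}+\binom{\lambda_j-1+m+j-i}{m+j-i-1},
\end{equation*}
so a path may reach $u_j'$ either through the half-weighted edge or through a unit-weight edge; correspondingly, the tip triangle $i'$ need not be covered by the half-weighted horizontal lozenge. Hence $\M(\widetilde{A})$ is not $2^{-(k-|I|)}$ times any unweighted count, and both links in your chain $\GF[\overline{\mathscr{P}_0}]=N=2^{k-|I|}\M(\widetilde{A})$ fail even though their composite is the true statement.

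The correct mechanism, which is what the paper proves, is the matrix identity $\M(\mathbf{v}',\mathbf{u}'_I)=\bigl(\tfrac12 U_{m+k}\bigr)\M(\mathbf{u}_I,\mathbf{v})^{T}$ of \eqref{eci}, established via the hockey-stick identity in \eqref{ech}; the scalar $\tfrac12$ in front of $U_{m+k}$ then contributes $2^{-(k-|I|)}$ to the determinant. This is an identity of generating functions between two non-isomorphic graphs (the lattice encoding $\widetilde{A}$ has only the $m+k$ connectors $v_i\to v_i'$ and carries the shift and the half-weights in its right half, whereas $\overline{G_{sym}}$ has $3(m+k)-2$ unit-weight connectors and an exactly mirrored right half), so the ``geometric dictionary'' you defer cannot be completed by matching vertices and edges; it has to be, in one form or another, the algebraic computation \eqref{ecg}--\eqref{eci}. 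Your outline correctly isolates where the difficulty lies but resolves it with an argument that does not hold.
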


The proof of Theorems \ref{tca} is organized as follows. We first construct a bijection between lozenge tilings and nonintersecting paths to convert the problem into the nonintersecting paths enumeration problem. Then, we apply Theorem \ref{tac} to finish the proof.

\begin{proof}[Proof of Theorem 3.1]
If $I=[k]$, then \eqref{eca} is true because the equation becomes\footnote{This is because, the two regions appearing in \eqref{eca} both have only one tiling of weight $1$ and $k-|I|=0$.} $1=1$. Thus, we assume that $I\subsetneq[k]$. We now construct lattices from the regions $A(m;\lambda_{st};I)$ and $\widetilde{A}(m;\lambda_{st};I)$.

\begin{figure}
    \centering
    \includegraphics[width=1.0\textwidth]{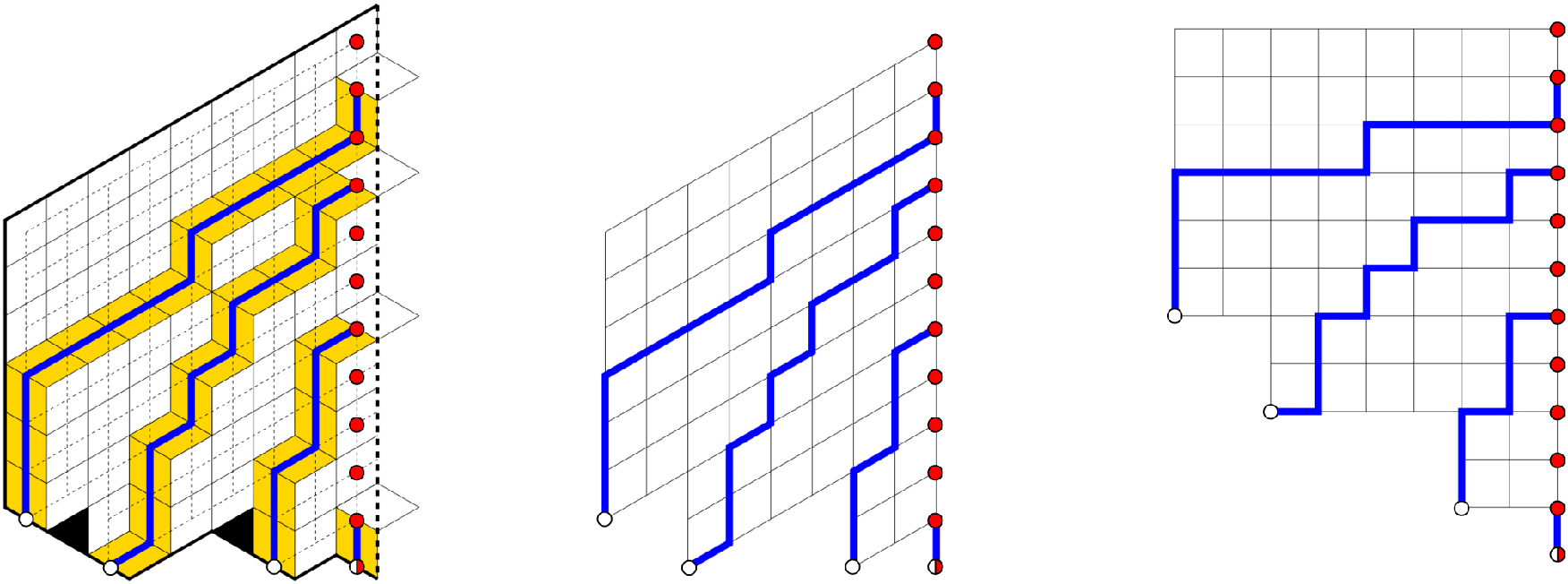}
    \caption{A paths of lozenges from a lozenge tilings of $A(m;\lambda_{st};I)$ in the right picture in Figure 3.2 (left), the lattice constructed from $A(m;\lambda_{st};I)$ and nonintersecting paths on it (middle) and realization of the lattice paths on $\mathbb{Z}^2$ (right). Starting and ending points are marked by empty and filled circles, respectively. The half-filled circle is both the starting and ending points (in this case, $u_6$ and $v_{12}$ coincide).}
    \label{fcc}
\end{figure}

To construct a lattice from $A(m;\lambda_{st};I)$, we mark the midpoints of all unit segments with negative slope in $A(m;\lambda_{st};I)$. In particular, we denote the point on the boundary of the unit triangle with label $i$ by $u_{i}$ for $i\in[k]\setminus I$ and the $(m+k)$ points on the immediate left to $\ell$ by $v_{1},\ldots,v_{m+k}$ from top to bottom. We join two points that are one unit away from each other and orient all vertical unit segments upward and all nonvertical unit segments from bottom-left to top-right to form a lattice. If we assign weight $1$ to every unit segment in the lattice, then we claim that the number of families of $(k-|I|)$ nonintersecting lattice paths with fixed starting points $\textbf{u}_I=(u_i)_{i\in[k]\setminus I}$ and unfixed ending points $\textbf{v}=(v_1,\ldots,v_{m+k})$ equals the number of lozenge tilings of $A(m;\lambda_{st};I)$. This can be checked as follows. Given a lozenge tiling of $A(m;\lambda_{st};I)$, color all the lozenges that do not cross $\ell$ whose long diagonals have either zero or negative slope. Those lozenges form $(k-|I|)$ paths of lozenges (see the left picture in Figure \ref{fcc}). Then, for each point $u_i$, join the point and the midpoint of the opposite side of the lozenge that contains $u_i$. Continue this process following each path of lozenges, then we get $(k-|I|)$ nonintersecting lattice paths from $\textbf{u}_I\coloneq(u_i)_{i\in[k]\setminus I}$ to $(k-|I|)$ points among $\textbf{v}=(v_1,\ldots,v_{m+k})$ (see the middle picture in Figure \ref{fcc}). One can readily check that this correspondence is invertible and thus, we have $\M_{f}(A(m;\lambda_{st};I))=\GF[\mathscr{P}_0(\textbf{u}_I,\textbf{v})]$. Note that our lattice paths can be regarded as lattice paths on $\mathbb{Z}^2$: set $u_i=(-\lambda_i+1,k-i)$ for $i\in[k]\setminus I$ and $v_j=(0,m+k-j)$ for $j\in[m+k]$ with all edges oriented toward the top and right (see the right picture in Figure \ref{fcc}). Then $\mathbf{u}_I$ and $\mathbf{v}$ are compatible and the path matrix $\M(\mathbf{u}_I,\mathbf{v})$ is the $(k-|I|)\times(m+k)$ matrix
\begin{equation}\label{ecb}
    \M(\mathbf{u}_I,\mathbf{v})=\Bigg[\binom{(0-(-\lambda_i+1))+((m+k-j)-(k-i))}{((m+k-j)-(k-i))}\Bigg]=\Bigg[\binom{\lambda_i-1+m+i-j}{m+i-j}\Bigg],
\end{equation}
whose rows are indexed by $i\in[k]\setminus I$ and columns are indexed by $j
\in[m+k]$. Thus, from Theorem \ref{tac}, we have
\begin{equation}\label{ecc}
    \M_{f}(A(m;\lambda_{st};I))^2=\GF[\mathscr{P}_0(\textbf{u}_I,\textbf{v})]^2=\det\Big[\M(\mathbf{u}_I,\mathbf{v})U_{m+k}\M(\mathbf{u}_I,\mathbf{v})^T\Big].
\end{equation}

\begin{figure}
    \centering
    \includegraphics[width=0.68\textwidth]{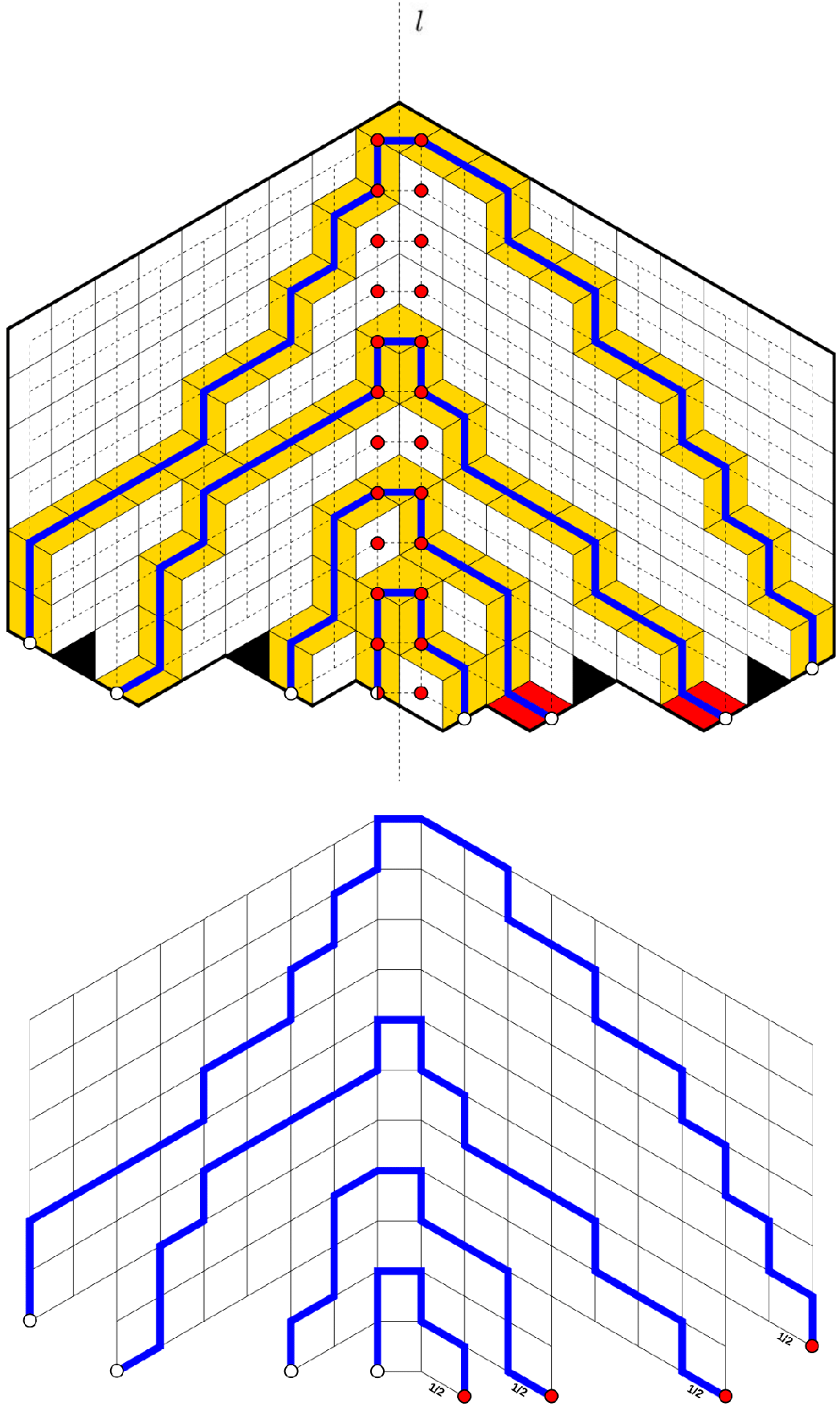}
    \caption{A path of lozenges obtained from a lozenge tiling of $\widetilde{A}(m;\lambda_{st};I)$ in the left picture in Figure \ref{fcb} (top) and the lattice constructed from $\widetilde{A}(m;\lambda_{st};I)$ and nonintersecting paths on it (bottom). In the top picture, the lozenges weighted by $\frac{1}{2}$ are colored red (darker), while all other lozenges in the paths of lozenges are colored gold (brighter). In the bottom picture, the edges weighted by $\frac{1}{2}$ are marked with $1/2$.}
    \label{fcd}
\end{figure}

To construct a lattice from $\widetilde{A}(m;\lambda_{st};I)$, we need three steps.
\begin{enumerate}
    \item On the subregion left to $\ell$, we follow the construction as we did for $A(m;\lambda_{st};I)$.
    \item On the subregion right to $\ell$, we mark the midpoints of all unit segments in it with a ``positive" slope. We denote the $(m+k)$ points on the immediate right to $\ell$ by $v_1',\ldots,v_{m+k}'$ from top to bottom and the points on the unit triangle with label $i'$ by $u_{i}'$ for $i\in[k]\setminus I$. We join every pair of vertices one unit away from each other by a unit segment. Unlike the previous step, we orient all vertical unit segments ``downward" and all nonvertical unit segments ``from top-left to bottom-right". Every unit segment is weighted by $1$, except the unit segment joining $u_{i}'$ and the point on the top-left of it, which is weighted by $\frac{1}{2}$, for $i\in[k]\setminus I$.
    \item We join $v_i$ and $v_i'$ for $i\in[m+k]$. These unit segments have unit weight and are oriented to the right.
\end{enumerate}
We now claim that there is a weight-preserving bijection between lozenge tilings of $\widetilde{A}(m;\lambda_{st};I)$ and families of $(k-|I|)$ nonintersecting lattice paths with fixed starting points $\textbf{u}_I=(u_i)_{i\in[k]\setminus I}$ and fixed ending points $\textbf{u}'_I=(u_i')_{i\in[k]\setminus I}$ on the lattice describe above. To see this, given a lozenge tiling of $\widetilde{A}(m;\lambda_{st};I)$, we color
\begin{enumerate}
    \item all the lozenges left to $\ell$ whose long diagonals have either negative slope or zero slope,
    \item all the lozenges right to $\ell$ whose long diagonals have either positive slope or zero slope, and
    \item all the horizontal lozenges crossing $\ell$.
\end{enumerate}
This time, we obtain $(k-|I|)$ nonintersecting paths of lozenges (see the picture on top in Figure \ref{fcd}). Then, following the lattice paths construction from lozenge tilings of $A(m;\lambda_{st};I)$, we get $(k-|I|)$ nonintersecting paths between $\textbf{u}_I$ and $\textbf{u}'_I$ on the lattice described above (see the bottom picture in Figure \ref{fcd}). Again, this correspondence is bijective and also weight-preserving. Furthermore, since $\textbf{u}_I$ and $\textbf{u}'_I$ are compatible, by Lindstr\"{o}m--Gessel--Viennot theorem, we have
\begin{equation}\label{ecd}
    \M(\widetilde{A}(m;\lambda_{st};I))=\GF[\mathscr{P}_0(\textbf{u}_I,\textbf{u}'_I)]=\det[\M(\mathbf{u}_I,\mathbf{u}'_I)].
\end{equation}
By the construction of the lattice, the path matrix $\M(\mathbf{u}_I,\mathbf{u}'_I)$ satisfies
\begin{equation}\label{ece}
    \M(\mathbf{u}_I,\mathbf{u}'_I)=\M(\mathbf{u}_I,\mathbf{v})\M(\mathbf{v},\mathbf{v}')\M(\mathbf{v}',\mathbf{u}'_I)
\end{equation}
and
\begin{equation}\label{ecf}
    \M(\mathbf{v},\mathbf{v}')=I_{m+k}
\end{equation}
is the identity matrix.
Furthermore, using a similar embedding on $\mathbb{Z}^2$ and the fact that the set of lattice paths from a vertex in $\mathbf{v}'$ and a vertex in $\mathbf{u}'_I$ can be partitioned according to the last step, one can find the $(m+k)\times(k-|I|)$ path matrix $\M(\mathbf{v}',\mathbf{u}'_I)$ as follows:
\begin{equation}\label{ecg}
    \M(\mathbf{v}',\mathbf{u}'_I)=\Bigg[\frac{1}{2}\binom{\lambda_j-1+m+j-i}{m+j-i}+\binom{\lambda_j-1+m+j-i}{m+j-i-1}\Bigg],
\end{equation}
where rows are indexed by $i\in[m+k]$ and columns are indexed by $j\in[k]\setminus I$. Since\footnote{In the first equality in \eqref{ech}, we use the well-known identity $\sum_{i=0}^k\binom{n+i}{i}=\binom{n+k+1}{k}$ for $k,n\in\mathbb{Z}_{\geq0}$. In the second equality in \eqref{ech}, we use the usual convention that $\binom{n}{k}=0$ if $k<0$.}
\begin{equation}\label{ech}
\begin{aligned}
    &\frac{1}{2}\binom{\lambda_j-1+m+j-i}{m+j-i}+\binom{\lambda_j-1+m+j-i}{m+j-i-1}\\
    =&\frac{1}{2}\binom{\lambda_j-1+m+j-i}{m+j-i}+\sum_{k=i+1}^{m+j}\binom{\lambda_j-1+m+j-k}{m+j-k}\\
    =&\frac{1}{2}\binom{\lambda_j-1+m+j-i}{m+j-i}+\sum_{k=i+1}^{m+k}\binom{\lambda_j-1+m+j-k}{m+j-k},
\end{aligned}
\end{equation}
one can deduce that
\begin{equation}\label{eci}
    \M(\mathbf{v}',\mathbf{u}'_I)=\bigg(\frac{1}{2}U_{m+k}\bigg)\M(\mathbf{u}_I,\mathbf{v})^{T},
\end{equation}
where $U_{m+k}$ is the $(m+k)\times(m+k)$ upper triangular matrix defined in the statement of Theorem \ref{tac}. Combining \eqref{ecd}--\eqref{ecf}, \eqref{eci}, and \eqref{ecc}, we have
\begin{equation}\label{ecj}
\begin{aligned}
    \M(\widetilde{A}(m;\lambda_{st};I))=\det\big[\M(\mathbf{u}_I,\mathbf{v})\M(\mathbf{v},\mathbf{v}')\M(\mathbf{v}',\mathbf{u}'_I)\big]   =&\det\bigg[\M(\mathbf{u}_I,\mathbf{v})\bigg(\frac{1}{2}U_{m+k}\bigg)\M(\mathbf{u}_I,\mathbf{v})^{T}\bigg]\\
    =&\det\bigg[\frac{1}{2}\Big(\M(\mathbf{u}_I,\mathbf{v})U_{m+k}\M(\mathbf{u}_I,\mathbf{v})^{T}\Big)\bigg]\\
    =&2^{-(k-|I|)}\det\Big[\M(\mathbf{u}_I,\mathbf{v})U_{m+k}\M(\mathbf{u}_I,\mathbf{v})^{T}\Big]\\
    =&2^{-(k-|I|)}\M_{f}(A(m;\lambda_{st};I))^2. 
\end{aligned}
\end{equation}
Multiplying both sides of \eqref{ecj} by $2^{k-|I|}$, we have
\begin{equation}\label{eck}
    \M_{f}(A(m;\lambda_{st};I))^2=2^{k-|I|}\M(\widetilde{A}(m;\lambda_{st};I))
\end{equation}
and this completes the proof.
\end{proof}

As mentioned in the introduction, we show several applications of Theorem \ref{tca} and one more application of Theorem \ref{tac} in the next section.

\section{Applications of Theorems \ref{tca} and \ref{tac}}\label{section4}

We present three applications of Theorems \ref{tca} and \ref{tac}. The three applications are as follows.
\begin{enumerate}
    \item Find a new family of regions whose tiling generating function is given by a simple product formula (see Theorem \ref{tda}).
    \item Give a simpler proof of a factorization theorem for lozenge tilings of hexagons with holes (see Theorem \ref{tdb}).
    \item Provide new determinant formulas for the volume generating functions of shifted plane partitions of a shifted shape and symmetric plane partitions of a symmetric shape (see Theorem \ref{tdd}).
\end{enumerate}

For nonnegative integers $n$ and $k$ such that $0\leq k\leq n$, consider a strict partition $\lambda_{st}^{n,k}\coloneqq(n,\ldots,1)+(k,\ldots,1)$. This is a partition whose $i$-th entry is $(n+k+2-2i)$ for $i=1,\ldots,k$ and $(n+1-i)$ for $i=k+1,\ldots,n$. We then consider the two regions $A(m;\lambda_{st}^{n,k})$ and  $\widetilde{A}(m;\lambda_{st}^{n,k})$ for nonnegative integer $m$ (see the two pictures in Figure \ref{fda}). Our first application of Theorem \ref{tca} is a simple product formula for the tiling generating function of $\widetilde{A}(m;\lambda_{st}^{n,k})$.

\begin{figure}
    \centering
    \includegraphics[width=1.0\textwidth]{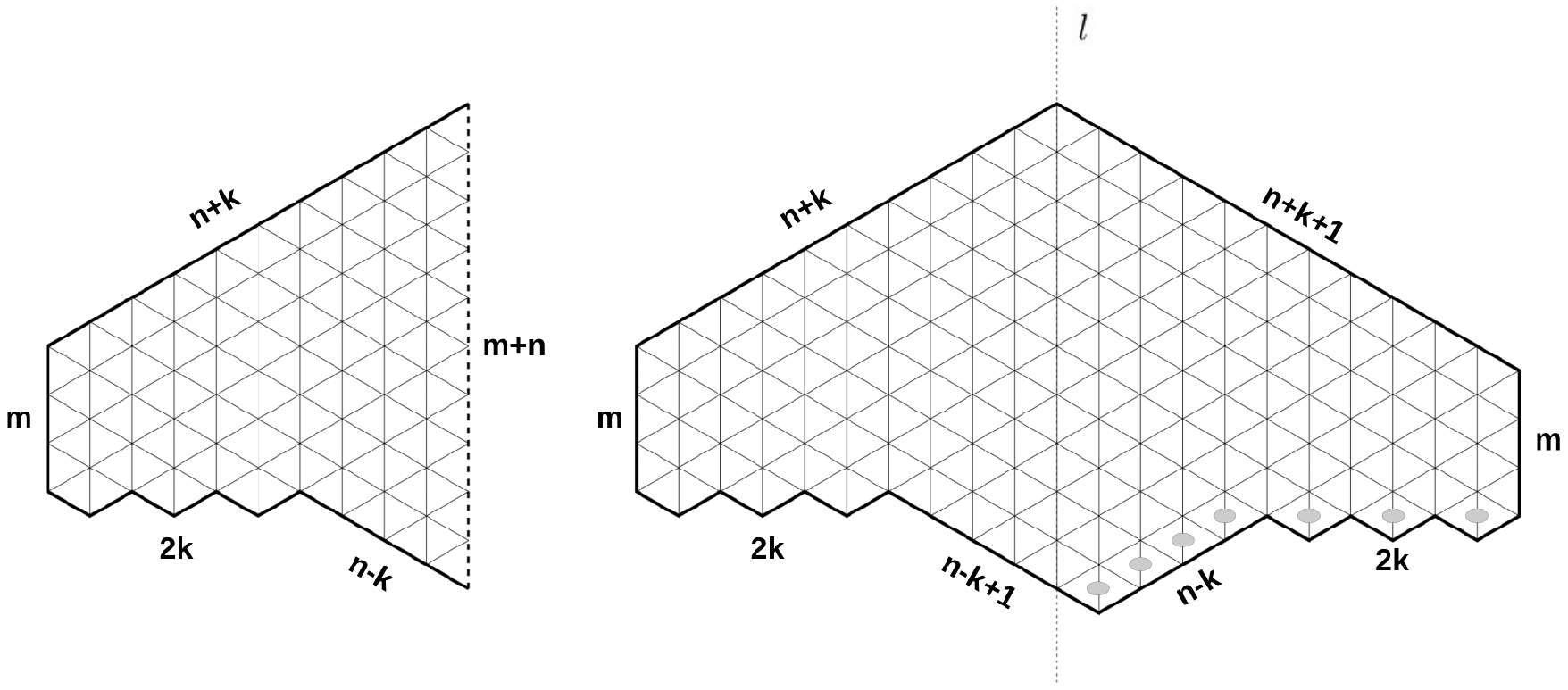}
    \caption{The regions $A(m;\lambda_{st}^{n,k})$ (left) and  $\widetilde{A}(m;\lambda_{st}^{n,k})$ (right), where $n=7$, $m=3$, and $k=3$.}
    \label{fda}
\end{figure}

\begin{thm}\label{tda}
    For nonnegative integers $m$, $n$, $k$ such that $0\leq k\leq n$ and a strict partition $\lambda_{st}^{n,k}=(n,\ldots,1)+(k,\ldots,1)$, the tiling generating function of $\widetilde{A}(m;\lambda_{st}^{n,k})$ is given by the following simple product formula.
    \begin{equation}\label{eda}
        \M(\widetilde{A}(m;\lambda_{st}^{n,k}))=\frac{1}{2^{n}}\Bigg[\prod_{i\leq i\leq j\leq n}\frac{m+i+j-1}{i+j-1}\prod_{1\leq i\leq j\leq i}\frac{m+i+j}{i+j}\Bigg]^2.
    \end{equation}
\end{thm}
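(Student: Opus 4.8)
The plan is to combine Theorem \ref{tca} with a product formula for $\M_f(A(m;\lambda_{st}^{n,k}))$ coming from a known enumeration result. By \eqref{eca} with $I=\emptyset$, we have $\M_f(A(m;\lambda_{st}^{n,k}))^2 = 2^{n}\,\M(\widetilde{A}(m;\lambda_{st}^{n,k}))$, so it suffices to produce a closed product form for the free-boundary quantity $\M_f(A(m;\lambda_{st}^{n,k}))$ and then solve for $\M(\widetilde{A}(m;\lambda_{st}^{n,k}))$. Rearranging, the target identity \eqref{eda} is equivalent to
\begin{equation*}
    \M_f(A(m;\lambda_{st}^{n,k})) = \prod_{1\leq i\leq j\leq n}\frac{m+i+j-1}{i+j-1}\prod_{1\leq i\leq j\leq k}\frac{m+i+j}{i+j},
\end{equation*}
which is the form I would aim to establish directly. (The right side of \eqref{eda} as typeset appears to contain typos in the product ranges; the genuinely desired statement is the squared version of the displayed double product with the two ranges being $1\le i\le j\le n$ and $1\le i\le j\le k$.)

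First I would recall from the proof of Theorem \ref{tca} that $\M_f(A(m;\lambda_{st};I))$ with $I=\emptyset$ equals $\sigma(\M(\mathbf{u},\mathbf{v}))$, the sum of maximum minors of the explicit binomial matrix in \eqref{ecb}, namely $\M(\mathbf{u},\mathbf{v}) = \big[\binom{\lambda_i-1+m+i-j}{m+i-j}\big]$ with $\lambda_i = \lambda_{st}^{n,k}$. By Theorem \ref{tbc}, this sum of maximum minors is a Pfaffian of the skew-symmetric matrix built from the $Q_{i,j}$ of \eqref{eae}, where each $Q_{i,j}$ is the $2\times n$ minor sum $\sum_{s<t}\big(M_{i,s}M_{j,t}-M_{i,t}M_{j,s}\big)$. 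The combinatorial meaning is that $\M_f(A(m;\lambda_{st}^{n,k}))$ enumerates lozenge tilings of a half-hexagonal region with a free straight boundary whose solid part is cut according to the strict partition $\lambda_{st}^{n,k}=(n,\dots,1)+(k,\dots,1)$. The key observation is that this particular $\lambda_{st}^{n,k}$ is precisely the profile making $A(m;\lambda_{st}^{n,k})$ a region whose free-boundary tiling count is a classical symmetric-function / symplectic-type product; concretely, it corresponds to a half-hexagon (or equivalently, to boxed symmetric/shifted plane partitions) whose enumeration is known in closed form.

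The cleanest route is therefore to evaluate the Pfaffian $\Pf_n\big[\Q(\mathbf{u},\mathbf{v})\big]$ for this specific binomial input. I would compute the $Q_{i,j}$ entries explicitly, recognizing them as (up to normalization) binomial-coefficient expressions that depend on $\lambda_i,\lambda_j,m$ and the index difference, and then apply a known Pfaffian evaluation of Schur/minor type. The most efficient tool is likely the de Bruijn–type or Ishikawa–Wakayama Pfaffian-minor summation formula (the same circle of identities underlying \cite{IW} and \cite{O1}), which converts $\sigma(Z)$ for a binomial matrix indexed by a strict partition into a product over pairs of parts. Substituting $\lambda_i = \lambda_{st}^{n,k}$ and simplifying the resulting product of factors of the form $(\lambda_i-\lambda_j)$, $(\lambda_i+\lambda_j+2m)$, and $\prod(\lambda_i+m)!\,/\,\lambda_i!$ should collapse—via telescoping of the two interleaved arithmetic progressions coming from the $(n,\dots,1)$ and $(k,\dots,1)$ summands—into the two double products $\prod_{1\le i\le j\le n}\frac{m+i+j-1}{i+j-1}$ and $\prod_{1\le i\le j\le k}\frac{m+i+j}{i+j}$.

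The main obstacle will be the Pfaffian/product evaluation itself: verifying that the sum of maximum minors of the binomial matrix for this interleaved strict partition factors into exactly these two double products. This is a nontrivial determinant/Pfaffian evaluation, and the bookkeeping of the factorial ratios and the cancellation between the two arithmetic progressions is delicate. I expect to either cite an existing boxed-plane-partition product formula (after identifying $A(m;\lambda_{st}^{n,k})$ with a standard region) or to carry out a Lindström–Gessel–Viennot reduction of $\sigma(\M(\mathbf{u},\mathbf{v}))$ to a single determinant evaluable by Krattenthaler's techniques (\cite{Kra1}, \cite{Kra2}), with the factor $\tfrac{1}{2^n}$ and the squaring in \eqref{eda} then following immediately from Theorem \ref{tca}. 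Everything after the product evaluation is routine substitution, so the entire difficulty is concentrated in that one factorization step.
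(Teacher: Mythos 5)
Your overall strategy is exactly the paper's: apply Theorem \ref{tca} with $I=\emptyset$ (noting that $\lambda_{st}^{n,k}$ has $n$ parts, so the factor is $2^{n}$), reduce \eqref{eda} to the product formula
$\M_f(A(m;\lambda_{st}^{n,k}))=\prod_{1\leq i\leq j\leq n}\frac{m+i+j-1}{i+j-1}\prod_{1\leq i\leq j\leq k}\frac{m+i+j}{i+j}$,
and you correctly diagnose the typos in the displayed product ranges. The difference is in how that product formula is obtained. The paper does not prove it at all: it identifies lozenge tilings of $A(m;\lambda_{st}^{n,k})$ with shifted plane partitions of the shifted \emph{double staircase} shape $\lambda_{st}^{n,k}=(n,\ldots,1)+(k,\ldots,1)$ with entries bounded by $m$, and cites the recent theorem of Hopkins--Lai \cite{HL} and Okada \cite{O2} for the closed product. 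You instead propose to derive the product yourself by computing the Okada--Stembridge entries $Q_{i,j}$ for the binomial matrix \eqref{ecb} and evaluating the resulting Pfaffian via minor-summation identities.

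That is where your proposal has a genuine gap: the Pfaffian/product evaluation you defer is not a routine application of de Bruijn or Ishikawa--Wakayama type formulas. For a general strict partition $\lambda_{st}$ the sum of maximum minors of \eqref{ecb} does \emph{not} factor into a nice product; the factorization is special to the interleaved double-staircase profile, and establishing it is precisely the content of the Hopkins--Lai and Okada papers (whose proofs are substantial --- one bijective/algebraic, one via Schur/symplectic character identities). So "everything after the product evaluation is routine" is true, but the product evaluation is the whole theorem, and your sketch of it (recognize the $Q_{i,j}$, apply a known Pfaffian evaluation, telescope) is not backed by an identifiable identity that applies here. The fix is the one you mention in passing as an alternative: identify $A(m;\lambda_{st}^{n,k})$ with the shifted double staircase region and cite \eqref{edb} from \cite{HL} and \cite{O2}, which is exactly what the paper does. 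With that citation in place your argument closes and coincides with the paper's.
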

\begin{proof}
The proof is based on the simple product formula for the number of lozenge tilings of $A(m;\lambda_{st}^{n,k})$, which is proved independently by Hopkins and Lai \cite{HL} and Okada \cite{O2} in two different ways. They explained that the set of lozenge tilings of $A(m;\lambda_{st}^{n,k})$ is in bijection with the set of shifted plane partitions of shifted double staircase shape $\lambda_{st}^{n,k}$ with largest entry no greater than $m$ and showed\footnote{In fact, they proved more general results. \eqref{edb} is a special case of their results.} that $\M_f(A(m;\lambda_{st}^{n,k}))$ is given by the following formula.
\begin{equation}\label{edb}
    \M_f(A(m;\lambda_{st}^{n,k}))=\prod_{i\leq i\leq j\leq n}\frac{m+i+j-1}{i+j-1}\prod_{1\leq i\leq j\leq i}\frac{m+i+j}{i+j}.
\end{equation}
Combining with Theorem \ref{tca}, the proof is completed.
\end{proof}
It would be interesting if one could prove \eqref{eda} directly without using \eqref{edb} and Theorem \ref{tca}. If one can find such a proof, we can deduce \eqref{edb} using Theorem \ref{tca}. This will give a new proof of \eqref{edb}.

Next, we give a simple proof of a factorization theorem for lozenge tilings of hexagons with holes, which was first proved by Ciucu \cite{C5}. To state the theorem, we define some hexagonal regions with holes. First, for positive integers $m$ and $n$, we consider a hexagon with sides of length $2m$, $n$, $n$, $2m$, $n$, and $n$ clockwise from the left side. Denote the resulting hexagon by $H_{2m,n}$. This hexagon has $n$ horizontal lozenges along its horizontal symmetry axis. When $n$ is even, we label them by $1,\ldots,\frac{n}{2},\frac{n}{2}',\ldots,1'$ from left to right, while we label them by $1,\ldots,\frac{n-1}{2},\frac{n+1}{2},\frac{n-1}{2}',\ldots,1'$ from left to right if $n$ is odd. Then, for any set of positive integers $K=\{k_1,\ldots,k_s\}$ such that $1\leq k_1<\ldots<k_s\leq\frac{n}{2}$, the region $H_{2m,n}(K)$ is obtained from $H_{2m,n}$ by deleting the left-pointing triangles of size $2$ that contain horizontal lozenges with labels $k_1,\ldots,k_s$ and the right-pointing triangles of size $2$ that contain horizontal lozenges with labels $k_1',\ldots,k_s'$ (see the left picture in Figure \ref{fdb}). Ciucu considered one more family of region $H_{2m,2n-1}(K;2x-1)$ for positive integers $m$, $n$, $x$ such that $x\leq n$ and a set of positive integers $K=\{k_1,\ldots,k_s\}$ such that $1\leq k_1<\ldots<k_s\leq\ (n-x)$. To construct this region, we first consider $H_{2m,2n-1}$ and remove a horizontal lozenge of size $(2x-1)$ from its center (we call it a ``punctured hexagon"). The resulting region is symmetric across its horizontal symmetry axis, and there are $(2n-2x)$ horizontal lozenges along the axis. We label the horizontal lozenges by $1,\ldots,(n-x),(n-x)',\ldots,1'$ from left to right. Then, $H_{2m,2n-1}(K;2x-1)$ is obtained from the punctured hexagon by deleting the left-pointing triangles of size $2$ that contain horizontal lozenges with labels $k_1,\ldots,k_s$ and the right-pointing triangles of size $2$ that contain horizontal lozenges with labels $k_1',\ldots,k_s'$ (see the right picture in Figure \ref{fdb}).

\begin{figure}
    \centering
    \includegraphics[width=1.0\textwidth]{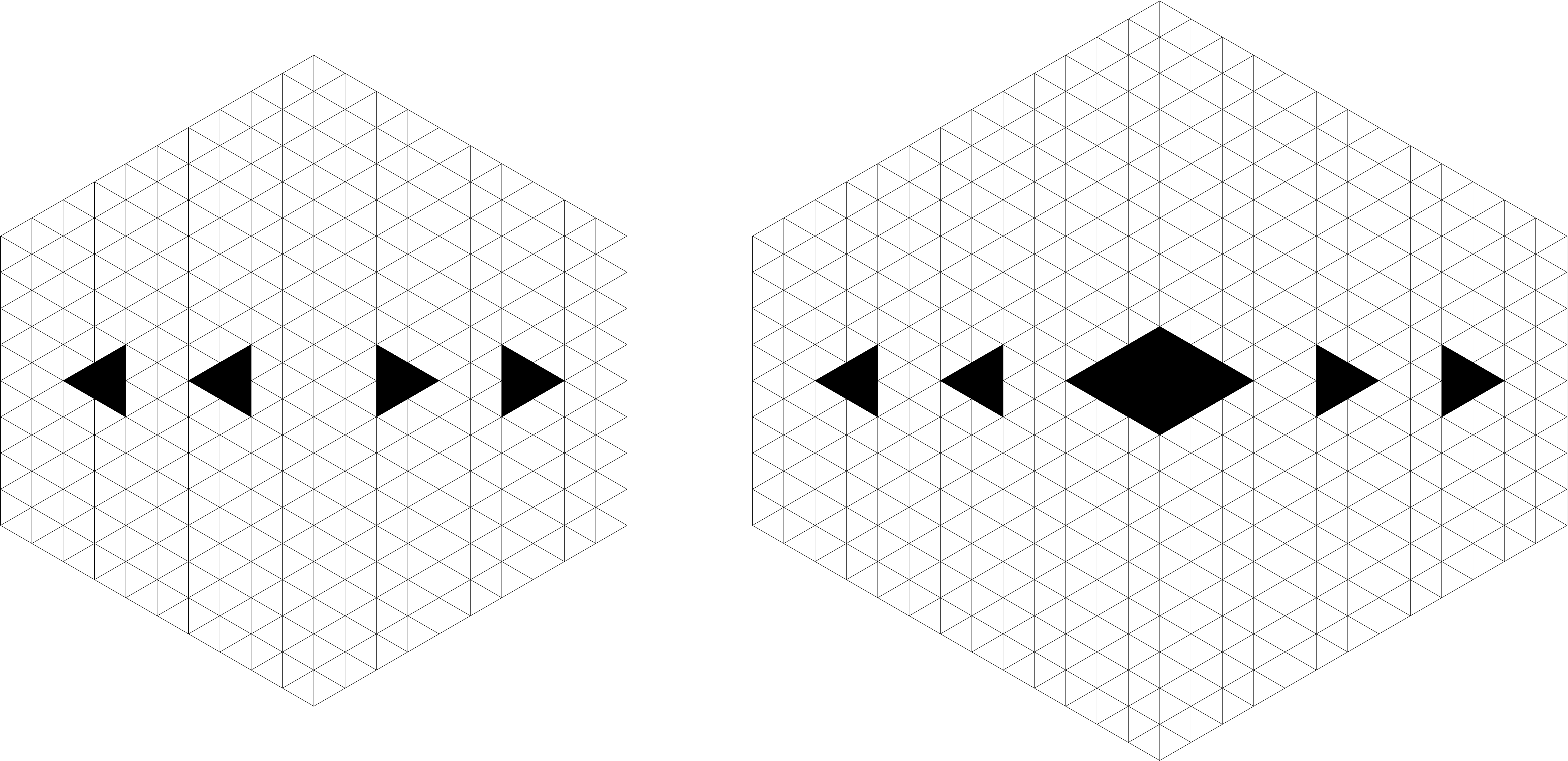}
    \caption{$H_{8,10}(\{2,4\})$(left) and $H_{8,13}(\{2,4\},3)$ (right).}
    \label{fdb}
\end{figure}

For any region $R$ on a triangular lattice, $R$ is \textit{centrally symmetric} if it is invariant under rotation by $180^{\circ}$. When $R$ is centrally symmetric, a lozenge tiling of $R$ is \textit{centrally symmetric} if the lozenge tiling is invariant under rotation by $180^{\circ}$ and the number of centrally symmetric lozenge tilings of $R$ is denoted by $\M_{\odot}(R)$. Similarly, a region $R$ is \textit{vertically symmetric} if it is invariant under reflection across its vertical symmetry axis, and its lozenge tiling is \textit{vertically symmetric} if the lozenge tiling is invariant under reflection across the vertical symmetry axis. The number of vertically symmetric lozenge tilings of $R$ is denoted by $\M_{|}(R)$. When a region $R$ is both centrally symmetric and vertically symmetric, $\M_{\odot,|}(R)$ denotes the number of lozenge tilings of $R$ that are both centrally symmetric and vertically symmetric. Note that the two families of regions $H_{2m,n}(K)$ and $H_{2m,2n-1}(K;2x-1)$ are both centrally symmetric and vertically symmetric. In \cite{C5}, Ciucu showed that if $R$ is $H_{2m,n}(K)$ or $H_{2m,2n-1}(K;2x-1)$, then $\M_{\odot}(R)$ and $\M_{\odot,|}(R)$ has the following simple relation.

\begin{thm}[Theorem 2.1 in \cite{C5}]\label{tdb}
Let $m$, $n$, and $x$ be positive integers such that $x\leq n$.

(a) For any set of positive integers $K=\{k_1,\ldots,k_s\}$ such that $1\leq k_1\leq\ldots\leq k_s\leq\frac{n}{2}$,
\begin{equation}\label{edc}
    \M_{\odot}(H_{2m,n}(K))=\M_{\odot,|}(H_{2m,n}(K))^2.
\end{equation}

(b) For any set of positive integers $K=\{k_1,\ldots,k_s\}$ such that $1\leq k_1\leq\ldots\leq k_s\leq\ (n-x)$,
\begin{equation}\label{edd}
    \M_{\odot}(H_{2m,2n-1}(K;2x-1))=\M_{\odot,|}(H_{2m,2n-1}(K;2x-1))^2.
\end{equation}
\end{thm}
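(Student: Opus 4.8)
The plan is to derive both factorizations from the squared-minor identity of Theorem \ref{tac} (via its lozenge-tiling incarnation, Theorem \ref{tca}), by realizing the two symmetry-class counts as the two sides of the relation $\M_f(A(m;\lambda_{st};I))^2 = 2^{k-|I|}\,\M(\widetilde A(m;\lambda_{st};I))$. The first observation is that each region $R$ under consideration ($H_{2m,n}(K)$ or $H_{2m,2n-1}(K;2x-1)$) is invariant under the Klein four-group generated by the central ($180^\circ$) symmetry $r$ and the reflection across the vertical axis, and consequently is also invariant under the reflection across the horizontal axis. Thus $\M_\odot(R)$ counts the tilings fixed by $\langle r\rangle$ and $\M_{\odot,|}(R)$ counts those fixed by the whole group. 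I will match $\M_{\odot,|}(R)$ with $\M_f(A)$ and $\M_\odot(R)$ with $2^{k-|I|}\M(\widetilde A)$ for a suitable strict partition $\lambda_{st}$ and set $I$, and then invoke Theorem \ref{tca} to close the argument.

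I would first establish $\M_{\odot,|}(R)=\M_f(A(m;\lambda_{st};I))$. A fully symmetric tiling is determined by its restriction to a fundamental quarter of $R$, which I obtain by folding across the two symmetry axes. Folding across the cut used for the $180^\circ$ symmetry turns that axis into a \emph{free} boundary, since the lozenges straddling it occur in half-turn pairs that protrude past the cut, whereas folding across a reflection axis produces a solid boundary along which the fixed lozenges are recorded with weight $\tfrac12$. The resulting quarter, with its single free side, its weight-$\tfrac12$ boundary lozenges, and the triangles removed in accordance with $K$ (and with the central puncture in case (b)), is precisely a region of the form $A(m;\lambda_{st};I)$: the strict partition $\lambda_{st}$ is read off from the side lengths $2m$ and $n$, and the set $I\subseteq[k]$ is read off from $K$ and $x$. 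In the punctured case (b) this produces exactly the double-staircase partitions $\lambda_{st}^{n,k}$ appearing in Theorem \ref{tda}.

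Next I would prove $\M_\odot(R)=2^{k-|I|}\,\M(\widetilde A(m;\lambda_{st};I))$. Doubling the quarter $A$ across its free boundary reconstitutes the symmetric half-region $\widetilde A(m;\lambda_{st};I)$, which carries no free boundary; a centrally symmetric tiling of $R$ is determined by such a fundamental half, and reflecting the lozenges that straddle the central axis into the adjoined mirror copy converts it into an ordinary tiling of $\widetilde A$. Since the $k-|I|$ lozenges carried with weight $\tfrac12$ in $\widetilde A$ correspond to genuine weight-$1$ lozenges of $R$ lying across the central axis, passing from the weighted count $\M(\widetilde A)$ to the honest count $\M_\odot(R)$ introduces the factor $2^{k-|I|}$. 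Combining the two matchings with Theorem \ref{tca} yields
\begin{equation*}
\M_{\odot,|}(R)^2 = \M_f(A(m;\lambda_{st};I))^2 = 2^{k-|I|}\,\M(\widetilde A(m;\lambda_{st};I)) = \M_\odot(R),
\end{equation*}
which is the asserted identity in both (a) and (b).

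The hard part will be the geometric bookkeeping underlying the two matchings, not the algebra. Concretely, I expect the main obstacle to be writing down $\lambda_{st}$ and $I\subseteq[k]$ explicitly from the data $(m,n,K)$ and, in case (b), from the puncture width $2x-1$, so that the fundamental quarter coincides on the nose with $A(m;\lambda_{st};I)$, and then tracking every weight-$\tfrac12$ lozenge together with every half-turn-paired straddling lozenge through the folds so that the power of $2$ in $\M_\odot(R)=2^{k-|I|}\M(\widetilde A)$ emerges exactly. The parity of $n$ in (a) — in particular the single horizontal lozenge fixed on the axis when $n$ is odd — and the odd central hole of width $2x-1$ in (b) are where these counts are most delicate, so the two cases will be carried out separately even though they follow the same scheme.
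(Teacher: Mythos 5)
Your overall architecture is exactly the paper's: identify $\M_{\odot,|}(R)$ with $\M_f(A(m;\lambda_{st};I))$ and $\M_\odot(R)$ with $2^{k-|I|}\M(\widetilde{A}(m;\lambda_{st};I))$ for a suitable staircase partition and $I=K$, then apply Theorem \ref{tca}. The difference is that the paper does not reprove these two reductions --- it imports them from Ciucu \cite{C5} --- whereas you sketch proofs of them, and those sketches contain genuine errors. First, the roles of the two symmetries are swapped in your description of the quarter region: the free boundary of $A$ arises from the \emph{vertical reflection} (a vertically symmetric tiling of the top half is recorded by its left part together with the lozenges allowed to protrude across the vertical axis), while the horizontal axis carries \emph{forced} horizontal lozenges (a tiling that is both centrally and vertically symmetric is horizontally symmetric, so every axial lozenge position is occupied); moreover the quarter region $A(m;\lambda_{st};I)$ carries no weight-$\tfrac12$ lozenges at all --- those live only in $\widetilde{A}$. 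Second, and more seriously, your derivation of $\M_\odot(R)=2^{k-|I|}\M(\widetilde{A})$ does not work as stated: you treat the factor $2^{k-|I|}$ as a uniform rescaling obtained by ``unweighting'' the $k-|I|$ weight-$\tfrac12$ lozenges, but those lozenges are \emph{optional} in a tiling of $\widetilde{A}$ (this is visible in the path matrix \eqref{ecg}, whose entries split according to whether the weight-$\tfrac12$ edge is used), so a tiling using $j$ of them has weight $2^{-j}$, not $2^{-(k-|I|)}$. The correspondence between centrally symmetric tilings of $R$ and tilings of the half-region is many-to-one with multiplicity depending on the tiling, and making this come out to exactly $2^{k-|I|}\M(\widetilde{A})$ is the content of Ciucu's Matching Factorization Theorem applied to the quotient of the dual graph by the $180^\circ$ rotation; it cannot be replaced by the bookkeeping you describe.

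A smaller but concrete error: in case (b) the relevant strict partition is $(2n-2,2n-4,\ldots,2x)$ (and in case (a) it is $(n-1,n-3,\ldots,1)$ or $(n-1,n-3,\ldots,2)$ according to the parity of $n$), not the double staircase $\lambda_{st}^{n,k}$; the double staircase belongs to the separate application in Theorem \ref{tda}. If you replace your two reduction arguments by citations to \cite{C5}, the remainder --- the identification of the quarter and half regions with $A$ and $\widetilde{A}$ and the application of Theorem \ref{tca} --- is exactly the paper's proof.
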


Ciucu pointed out that when $K=\emptyset$ in Theorem \ref{tdb} (a), via the bijection of David and Tomei (see \cite{DT}), it becomes the factorization formula that ``self-complementary" plane partitions and ``symmetric and self-complementary" plane partitions satisfy, which follows from the product formulas for these two symmetry classes by Stanley \cite{St} and Proctor \cite{P}. For the readers who are interested in the enumeration of symmetry classes of plane partitions, we refer the readers to \cite{St}, \cite{Kup}, and \cite{Kra3}.

Ciucu's proof of the above theorem consists of two parts. 
\begin{enumerate}
    \item First, he showed that for $R=H_{2m,n}(K)$ or $H_{2m,2n-1}(K;2x-1)$, $\M_{\odot}(R)$ and $\M_{\odot,|}(R)$ can be reduced to enumeration of lozenge tilings of two subregions of $R$.
    \item Then, he enumerated lozenge tilings of the two subregions using his previous work in \cite{C3} and \cite{C4} and checked that both sides of \eqref{edc} and \eqref{edd} match up.
\end{enumerate}

We now give a new proof of Theorem \ref{tdb}. We will follow\footnote{When we follow the first part of Ciucu's proof in \cite{C5}, instead of giving a full detail, we give a brief idea of Ciucu's argument and refer the reader to \cite{C5}.} the first part of Ciucu's proof. Then, instead of evaluating both sides of \eqref{edc} and \eqref{edd}, we show that they should match up because of Theorem \ref{tca}. Our new proof answers Ciucu's question about finding a direct proof of his theorem without explicitly evaluating both sides of the equations.

\begin{proof}[Proof of Theorem \ref{tdb}]

\begin{figure}
    \centering
    \includegraphics[width=0.9\textwidth]{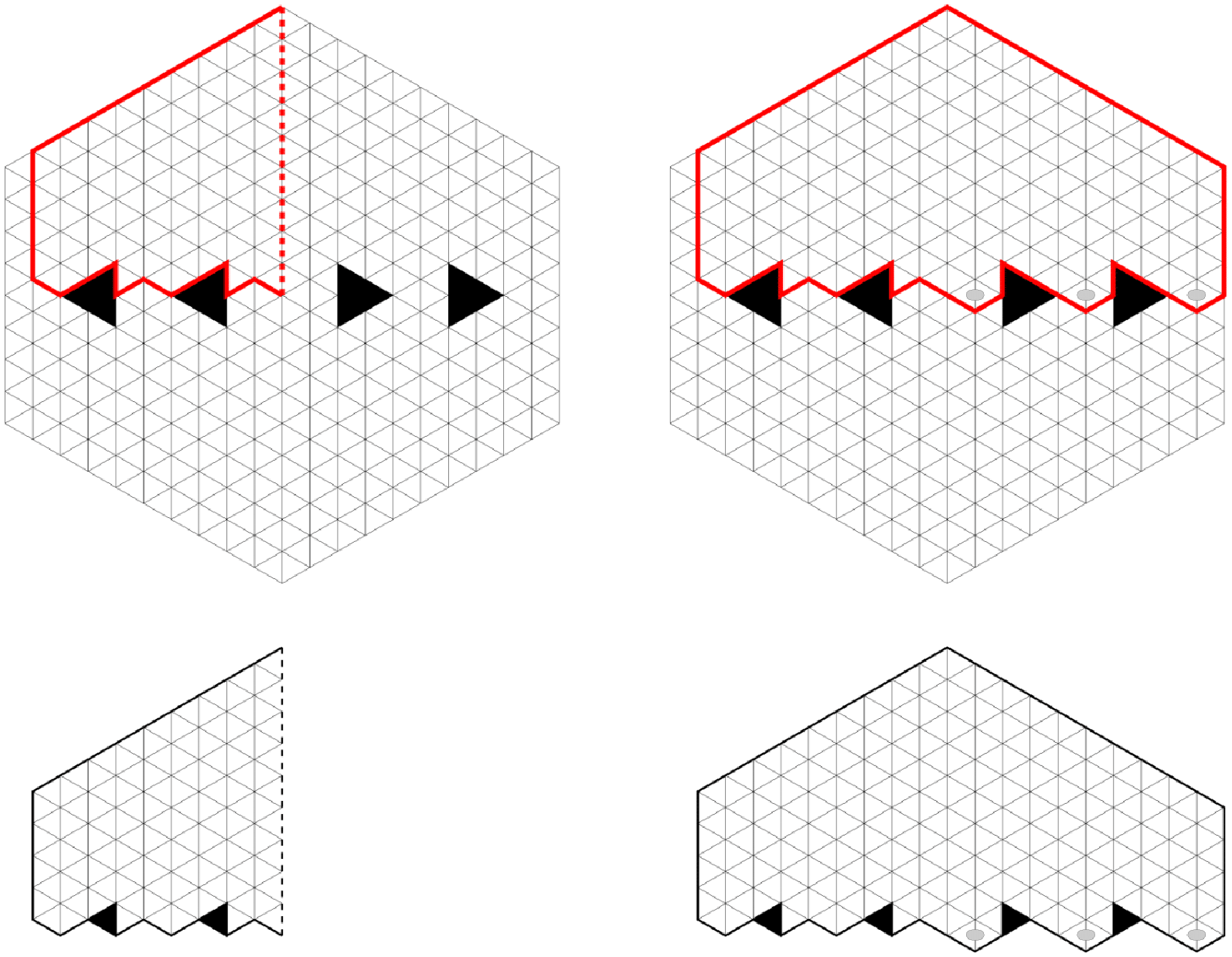}
    \caption{The regions $Q_{4,10}(\{2,4\})$ (top-left, bounded by thick lines) and $A(4;(9,7,\ldots,1);\{2,4\})$ (bottom-left) are the same regions, and thus, have the same number of lozenge tilings. Similarly, The regions $R_{4,10}(\{2,4\})$ (top-right, bounded by thick lines) and $\widetilde{A}(4;(9,7,\ldots,1);\{2,4\})$ (bottom-right) are the same regions, and thus, have the same tiling generating functions.}
    \label{fdc}
\end{figure}

We first prove part (a) for $n$ even. In \cite{C5}, Ciucu showed that the number of both centrally symmetric and vertically symmetric lozenge tilings of $H_{2m,n}(K)$ is the same as the number of lozenge tilings of roughly a quarter of the region that has a free boundary on the right side, as shown in the top-left picture in Figure \ref{fdc}. This is because a tiling is both centrally symmetric and vertically symmetric if and only if it is both vertically symmetric and horizontally symmetric\footnote{A region is \textit{horizontally symmetric} if it is invariant under reflection across its horizontal symmetry axis. A tiling of a horizontally symmetric region is \textit{horizontally symmetric} if the tiling is invariant under reflection across its horizontal symmetry axis.}. Thus, every horizontal lozenge on the horizontal symmetry axis should be a part of a tiling, and the tiling is uniquely determined by the tiling of its top-left subregion. The free boundary is due to the vertical symmetry condition. Let us denote this region by $Q_{m,n}(K)$ (it is the subregion enclosed by thick lines in the top-left picture in Figure \ref{fdc}). Therefore, we have
\begin{equation}\label{ede}
    \M_{\odot,|}(H_{2m,n}(K))=\M_f(Q_{m,n}(K)).
\end{equation}
See \cite{C5} for more details about how \eqref{ede} is derived. One important observation is, this region $Q_{m,n}(K)$ has the same number of lozenge tilings as $A(m;(n-1,n-3,\ldots,1); K)$ because they are the same regions (compare the two pictures on the left in Figure \ref{fdc}). Thus, we have
\begin{equation}\label{edf}
    \M_f(Q_{m,n}(K))=\M_f(A(m;(n-1,n-3,\ldots,1); K)).
\end{equation}

In case of centrally symmetric lozenge tilings of $H_{2m,n}(K)$, using an extension of his Matching Factorization Theorem (see the proof of Theorem 7.1 in \cite{C1} or Theorem 6.1 in \cite{C7}) to the quotient graph of the dual graph of $H_{2m,n}(K)$ with respect to the rotation by $180^{\circ}$, he showed that the number of centrally symmetric lozenge tilings of $H_{2m,n}(K)$ is equal to $2^{n/2-s}$ times the tiling generating function of roughly a half of the region. More precisely, cut the region $H_{2m,n}(K)$ along the top sides of lozenges labeled by $i$ for $i\in[\frac{n}{2}]\setminus K$ and the bottom sides of lozenges labeled by $i'$ for $i\in[\frac{n}{2}]\setminus K$. Then the region is split into two isomorphic subregions. Consider the one above and delete the left-most strip, which is uniquely tiled due to an acute angle at the corner. We denote the resulting region by $R_{m,n}(K)$ (see the top-right picture in Figure \ref{fdc} for an example). Every lozenges in $R_{m,n}(K)$ are weighted by $1$, except the horizontal lozenges labeled by $i'$ for $i\in[\frac{n}{2}]\setminus K$: they are weighted by $\frac{1}{2}$. Then Ciucu showed
\begin{equation}\label{edg}
    \M_{\odot}(H_{2m,n}(K))=2^{n/2-s}\M(R_{m,n}(K)).
\end{equation}
See \cite{C5} for more details about how \eqref{edg} is derived. Another important observation is, the region $R_{m,n}(K)$ has the same tiling generating function as $\widetilde{A}(m;(n-1,n-3,\ldots,1); K)$. This is again because they are the same regions (see the two pictures on the right in Figure \ref{fdc}). Hence, we have
\begin{equation}\label{edh}
    \M_{\odot}(R_{m,n}(K))=\M(\widetilde{A}(m;(n-1,n-3,\ldots,1); K)).
\end{equation}
Thus, to prove \eqref{edc}, it suffices to show
\begin{equation}\label{edi}
    2^{n/2-s}\M(\widetilde{A}(m;(n-1,n-3,\ldots,1); K))=\M(A(m;(n-1,n-3,\ldots,1); K))^2.
\end{equation}
This follows from Theorem \ref{tca} and completes the proof for $n$ even.

\begin{figure}
    \centering
    \includegraphics[width=1.0\textwidth]{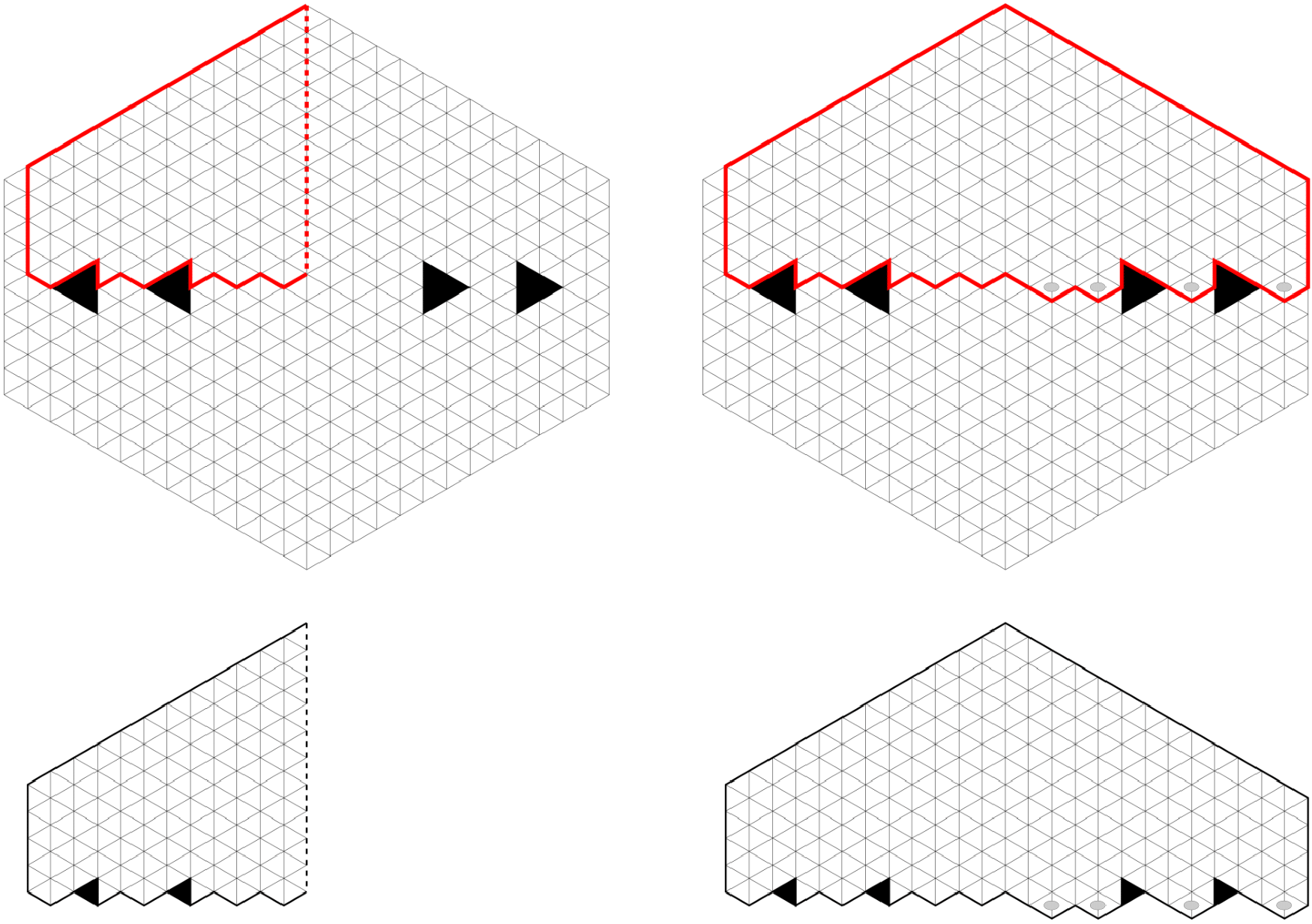}
    \caption{In \cite{C5}, Ciucu showed that the number of lozenge tilings (or the tiling generating function) of the subregion bounded by the thick lines on the top-left (or top-right) picture determines the number of centrally symmetric and vertically symmetric lozenge tilings (or centrally symmetric lozenge tilings) of $H_{8,13}(\{2,4\})$. The subregion on the top-left is the same as the region on the bottom-left, which is $A(4;(12,10,\ldots,2);\{2,4\})$. Similarly, the subregion on the top-right is the same as the region on the bottom-right, which is $\widetilde{A}(4;(12,10,\ldots,2);\{2,4\})$.}
    \label{fdd}
\end{figure}

For $n$ odd, the proof is almost the same as when $n$ is even. Following the same idea, one can show (see Figure \ref{fdd}. We again refer the reader to \cite{C5} for more details of why the equations \eqref{edj} and \eqref{edk} hold.)
\begin{equation}\label{edj}
    \M_{\odot,|}(H_{2m,n}(K))=\M_f(A(m;(n-1,n-3,\ldots,2); K)).
\end{equation}
and
\begin{equation}\label{edk}
    \M_{\odot}(H_{2m,n}(K))=2^{(n-1)/2-s}\M(\widetilde{A}(m;(n-1,n-3,\ldots,2); K)).
\end{equation}
Then from Theorem \ref{tca}, we have
\begin{equation}\label{edl}
\begin{aligned}
    \M_{\odot}(H_{2m,n}(K))&=2^{(n-1)/2-s}\M(\widetilde{A}(m;(n-1,n-3,\ldots,2); K))\\
    &=\M(A(m;(n-1,n-3,\ldots,2); K))^2\\
    &=\M_{\odot,|}(H_{2m,n}(K))^2.    
\end{aligned}
\end{equation}
and this completes the proof for $n$ odd.

\begin{figure}
    \centering
    \includegraphics[width=1.0\textwidth]{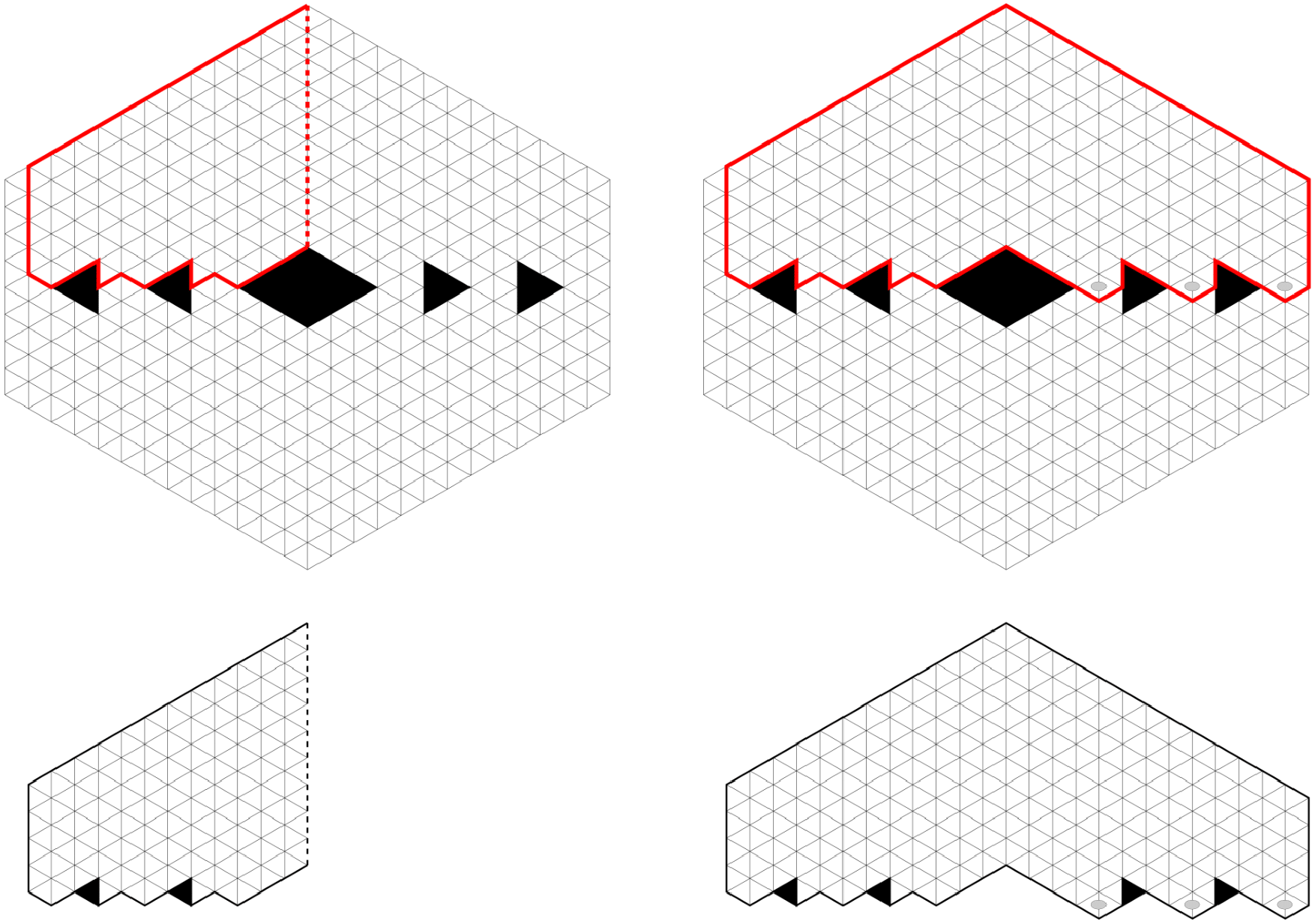}
    \caption{In \cite{C5}, Ciucu showed that the number of lozenge tilings (or the tiling generating function) of the subregion bounded by the thick lines on the top-left (or top-right) picture determines the number of centrally symmetric and vertically symmetric lozenge tilings (or centrally symmetric lozenge tilings) of $H_{8,13}(\{2,4\};3)$. The subregion on the top-left is the same as the region on the bottom-left, which is $A(4;(12,10,\ldots,4);\{2,4\})$. Similarly, the subregion on the top-right is the same as the region on the bottom-right, which is $\widetilde{A}(4;(12,10,\ldots,4);\{2,4\})$.}
    \label{fde}
\end{figure}

The proof of part (b) is almost the same as that of part (a). One can deduce from Ciucu's first part of the proof in \cite{C4} that (see Figure \ref{fde}. The details of why \eqref{tdm} and \eqref{tdn} hold can be found in \cite{C5}.)
\begin{equation}\label{tdm}
    \M_{\odot,|}(H_{2m,2n-1}(K;2x-1))=\M(A(m;(2n-2,2n-4,\ldots,2x); K))
\end{equation}
and
\begin{equation}\label{tdn}
    \M_{\odot}(H_{2m,2n-1}(K;2x-1))=2^{n-x-s}\M(\widetilde{A}(m;(2n-2,2n-4,\ldots,2x); K)).
\end{equation}
Then from Theorem \ref{tca},
\begin{equation}\label{tdo}
\begin{aligned}
    \M_{\odot}(H_{2m,2n-1}(K;2x-1))&=2^{n-x-s}\M(\widetilde{A}(m;(2n-2,2n-4,\ldots,2x); K))\\
    &=\M(A(m;(2n-2,2n-4,\ldots,2x); K))^2\\
    &=\M_{\odot,|}(H_{2m,2n-1}(K;2x-1))^2.
\end{aligned}
\end{equation}
This completes the proof of part (b).
\end{proof}

The last part concerns symmetric plane partitions of a symmetric shape, shifted plane partitions of a shifted shape, and their volume generating functions. Recall that a partition $\lambda=(\lambda_1,\ldots,\lambda_k)$ is a $k$-tuple of positive integers $\lambda_1,\ldots,\lambda_k$ such that $\lambda_1\geq\ldots\geq\lambda_k$. A \textit{Young diagram of the partition $\lambda$} is a collection of left-justified boxes with $\lambda_i$ boxes in the $i$-th row (from the top). A partition is \textit{symmetric} if its Young diagram is invariant under reflection across the main diagonal. A \textit{plane partition of shape $\lambda$} is an array of nonnegative integers $\pi=(\pi_{i,j})_{1\leq i\leq k, 1\leq j\leq\lambda_{i}}$ such that 
$\pi_{i,j}\geq\pi_{i+1,j}$ and $\pi_{i,j}\geq\pi_{i,j+1}$. It can be thought of as writing $\pi_{i,j}$ on the $(i,j)$-position of the Young diagram of shape $\lambda$. A plane partition can be realized as a stack of unit cubes: this can be done by putting $\pi_{i,j}$ unit cubes on the $(i,j)$-position of the $\lambda$'s Young diagram. A \textit{volume} of the plane partition $\pi$ is $|\pi|\coloneqq\sum_{i=1}^{k}\sum_{j=1}^{\lambda_{i}}\pi_{i,j}$, the sum of all entries of $\pi$, which can be thought as the total number of unit cubes in the stack. For any nonnegative integer $m$ and a partition $\lambda$, $PP(m;\lambda)$ is the set of plane partitions of shape $\lambda$ whose largest entry $\pi_{1,1}$ is no greater than $m$. The cardinality of $PP(m;\lambda)$ is denoted by $|PP(m;\lambda)|$ and the volume generating function of $PP(m;\lambda)$ is $|PP(m;\lambda;q)|\coloneqq\sum_{\pi\in PP(m;\lambda)}q^{|\pi|}$. For a symmetric partition $\lambda$, a plane partition $\pi$ of shape $\lambda$ is \textit{symmetric} if it is invariant under reflection across the main diagonal, i.e., $\pi_{i,j}=\pi_{j,i}$ for all $i$ and $j$. The set of symmetric plane partitions of symmetric shape $\lambda$ with largest entry no greater than $m$ is denoted by $PP_{sym}(m;\lambda)$. The cardinality and the volume generating function of $PP_{sym}(m;\lambda)$ are denoted by $|PP_{sym}(m;\lambda)|$ and $|PP_{sym}(m;\lambda;q)|\coloneqq\sum_{\pi\in PP_{sym}(m;\lambda)}q^{|\pi|}$, respectively.

\begin{figure}
    \centering
    \includegraphics[width=0.6\textwidth]{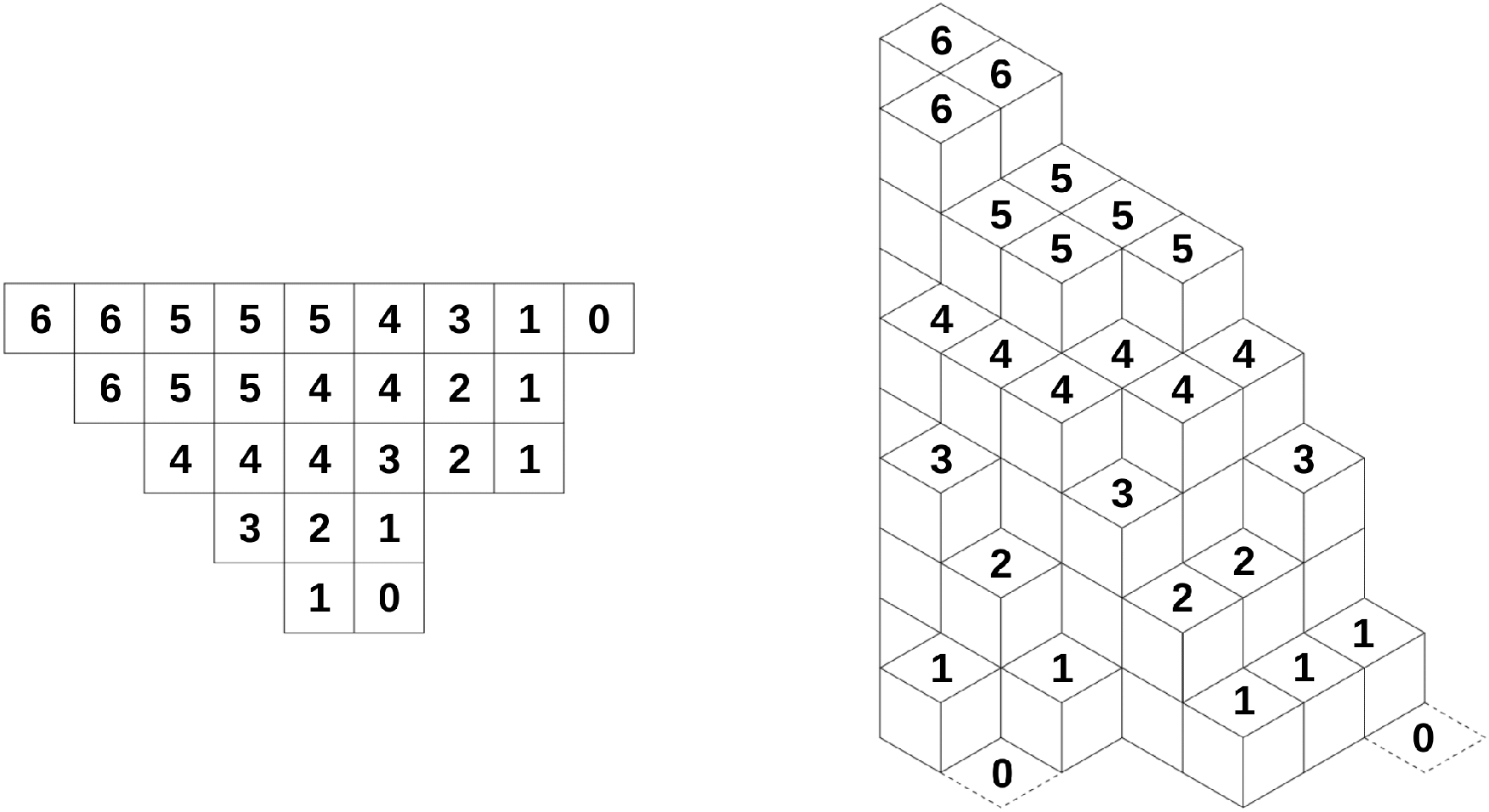}
    \caption{A shifted plane partition in $SPP(m;\lambda_{st})$, where $m=6$ and $\lambda_{st}=(9,7,6,3,2)$ (left) and its 3-D realization as a stack of unit cubes (right). The volume of this shifted plane partition is $87$.}
    \label{fdf}
\end{figure}

Recall that a partition $\lambda=(\lambda_1,\ldots,\lambda_k)$ is \textit{strict} if its entries satisfy the strict inequality $\lambda_1>\ldots>\lambda_k$. When the partition is strict, we use the notation $\lambda_{st}$. Given a strict partition $\lambda_{st}=(\lambda_1,\ldots,\lambda_k)$, its \textit{shifted Young diagram} is a collection of boxes that has $\lambda_i$ boxes in $i$-th row where $i$-th row is indented by $(i-1)$ unit. Then, a \textit{shifted plane partition of shifted shape $\lambda_{st}$} is an array of nonnegative integers $\pi=(\pi_{i,j})_{1\leq i\leq k, i\leq j\leq\lambda_{i}+i-1}$ such that $\pi_{i,j}\geq \pi_{i+1,j}$ and $\pi_{i,j}\geq\pi_{i,j+1}$ for all $i$, $j$ and its volume is again the sum of all its entries and denoted by the same notation $|\pi|\coloneqq\sum_{i=1}^{k}\sum_{j=i}^{\lambda_{i}+i-1}\pi_{i,j}$ (see the left picture in Figure \ref{fdf}). Note that a shifted plane partition can also be realized as a stack of unit cubes, as shown in the right picture in Figure \ref{fdf}. For any strict partition $\lambda_{st}$ and nonnegative integer $m$, the set of shifted plane partitions of shifted shape $\lambda_{st}$ with largest entry no greater than $m$ is denoted by $SPP(m;\lambda_{st})$. The cardinality and the volume generating function of $SPP(m;\lambda_{st})$ are denoted by $|SPP(m;\lambda_{st})|$ and $|SPP(m;\lambda_{st};q)|\coloneqq\sum_{\pi\in SPP(m;\lambda_{st})}q^{|\pi|}$, respectively. Lastly, for any strict partition $\lambda_{st}=(\lambda_{1},\ldots,\lambda_{k})$, consider its Young diagram and symmetrize it across the diagonal through the centers of the left-most boxes in each row. What we get is a Young diagram of a certain symmetric partition, and we denote this partition by $\overline{\lambda_{st}}$. More precisely, $\overline{\lambda_{st}}$ is a symmetric partition whose Young diagram has $\lambda_{i}+i-1$ boxes in $i$-th row for $1\leq i\leq k$ and the size of its Durfee square\footnote{A \textit{Durfee square} of a Young diagram is the largest square that fits inside the Young diagram.} is $k$ (these two properties uniquely determine $\overline{\lambda_{st}}$).

\begin{figure}
    \centering
    \includegraphics[width=1.0\textwidth]{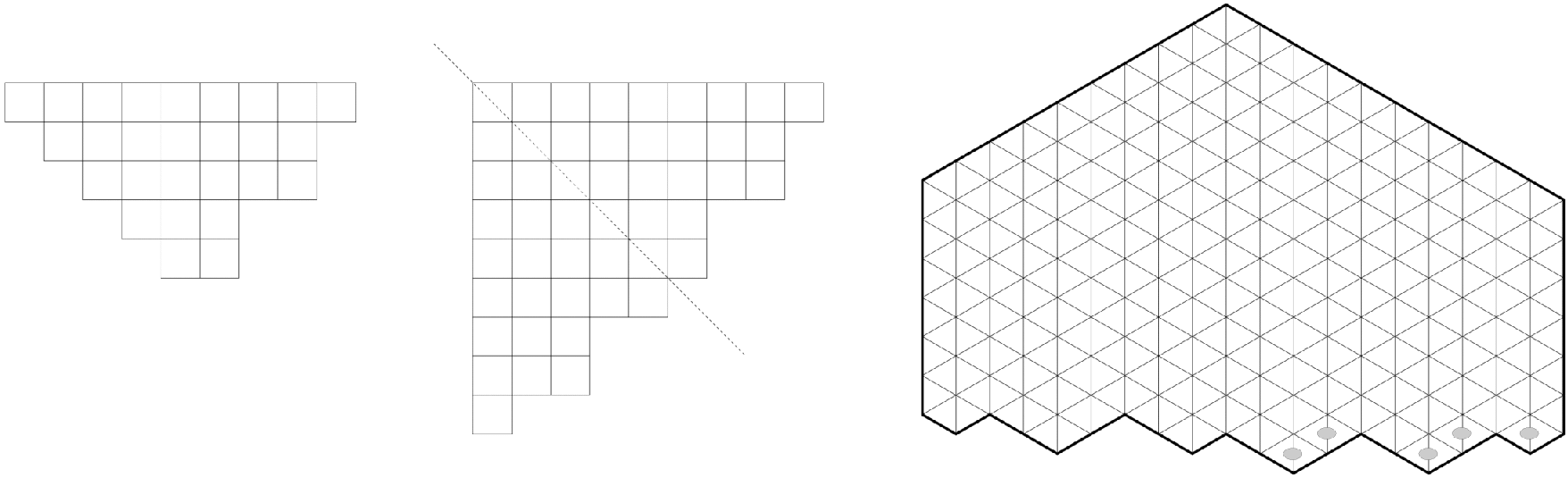}
    \caption{The Young diagrams of $\lambda_{st}$ (left) and $\overline{\lambda_{st}}$ (center) and the region $\widetilde{A}(m;\lambda_{st})$ (right), where $\lambda_{st}=(9,7,6,3,2)$ and $m=6$. Theorem \ref{tdc} states that to find the cardinality of $\mathit{SPP}(m;\lambda_{st})$ and $\mathit{PP}_{sym}(m;\overline{\lambda_{st}})$, it suffices to find the tiling generating function of $\widetilde{A}(m;\lambda_{st})$.}
    \label{fdg}
\end{figure}

We provide two results here. The first result states that to find $|PP_{sym}(m;\overline{\lambda_{st}})|$ or $|SPP(m;\lambda_{st})|$, one can instead consider (weighted) enumeration of lozenge tilings of $\widetilde{A}(m;\lambda_{st})$ (see the pictures in Figure \ref{fdg}).

\begin{thm}\label{tdc}
    For any positive integer $m$ and a strict partition $\lambda_{st}$ with $k$ parts, $|\mathit{SPP}(m;\lambda_{st})|$ and $|\mathit{PP}_{sym}(m;\overline{\lambda_{st}})|$ satisfy
    \begin{equation}\label{tdp}
        |\mathit{SPP}(m;\lambda_{st})|^2=|\mathit{PP}_{sym}(m;\overline{\lambda_{st}})|^2=2^{k}\M(\widetilde{A}(m;\lambda_{st})).
    \end{equation}
\end{thm}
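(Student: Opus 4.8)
The plan is to establish the two equalities in \eqref{tdp} separately and then combine them. The right equality, $|\mathit{PP}_{sym}(m;\overline{\lambda_{st}})|^2=2^{k}\M(\widetilde{A}(m;\lambda_{st}))$, is where the main combinatorial content lies, while the left equality $|\mathit{SPP}(m;\lambda_{st})|^2=|\mathit{PP}_{sym}(m;\overline{\lambda_{st}})|^2$ should reduce to exhibiting a bijection together with Theorem \ref{tca}. I would first set up the two standard bijections between plane partitions and lozenge tilings. For shifted plane partitions, the classical correspondence sends $\pi\in\mathit{SPP}(m;\lambda_{st})$ to a lozenge tiling of the region $A(m;\lambda_{st})$ with its free boundary along $\ell$, in exactly the way Hopkins--Lai and Okada use in \eqref{edb}; under this correspondence the cardinality counts tilings, so $|\mathit{SPP}(m;\lambda_{st})|=\M_f(A(m;\lambda_{st}))$. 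Applying Theorem \ref{tca} with $I=\emptyset$ (so $|I|=0$) then gives immediately
\begin{equation*}
    |\mathit{SPP}(m;\lambda_{st})|^2=\M_f(A(m;\lambda_{st}))^2=2^{k}\M(\widetilde{A}(m;\lambda_{st})),
\end{equation*}
which already supplies one of the two stated identities.

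For the symmetric plane partition side, I would argue that $|\mathit{PP}_{sym}(m;\overline{\lambda_{st}})|=|\mathit{SPP}(m;\lambda_{st})|$ by a direct bijection. Given a symmetric plane partition $\pi$ of the symmetric shape $\overline{\lambda_{st}}$, its entries are determined by the values on and above the main diagonal, since $\pi_{i,j}=\pi_{j,i}$; restricting $\pi$ to the boxes $(i,j)$ with $j\geq i$ yields precisely an array on the shifted Young diagram of $\lambda_{st}$, because by construction $\overline{\lambda_{st}}$ has $\lambda_i+i-1$ boxes in row $i$ and Durfee square of size $k$, so the portion weakly to the right of the diagonal has exactly $\lambda_i$ boxes in row $i$. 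I would check that the plane-partition inequalities $\pi_{i,j}\geq\pi_{i+1,j}$ and $\pi_{i,j}\geq\pi_{i,j+1}$ restrict correctly to the shifted inequalities and that the bound $\pi_{1,1}\leq m$ is preserved; the inverse map reflects a shifted plane partition across the diagonal. This shows $|\mathit{PP}_{sym}(m;\overline{\lambda_{st}})|=|\mathit{SPP}(m;\lambda_{st})|$, hence their squares agree, giving the left equality in \eqref{tdp}. Combining this with the displayed identity above completes the proof.

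The main obstacle I anticipate is the verification that the diagonal-restriction map is genuinely a bijection onto $\mathit{SPP}(m;\lambda_{st})$ with the correct shape and inequality constraints---in particular confirming that the two defining properties of $\overline{\lambda_{st}}$ (row lengths $\lambda_i+i-1$ and Durfee square size $k$) force the above-diagonal region to be exactly the shifted shape of $\lambda_{st}$, and that no additional inequalities beyond the shifted ones are imposed by symmetry along the diagonal itself. This is essentially bookkeeping on indices rather than a deep difficulty, and once it is settled the rest follows mechanically from Theorem \ref{tca}.
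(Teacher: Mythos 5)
Your proposal is correct and follows essentially the same route as the paper: the diagonal-restriction bijection between $\mathit{PP}_{sym}(m;\overline{\lambda_{st}})$ and $\mathit{SPP}(m;\lambda_{st})$, the identification $|\mathit{SPP}(m;\lambda_{st})|=\M_f(A(m;\lambda_{st}))$ via the standard correspondence with lozenge tilings of the free-boundary region, and then Theorem \ref{tca} with $I=\emptyset$. The paper merely orders the bijections slightly differently (passing through the symmetric plane partitions and the David--Tomei realization to reach the tilings), but the ingredients and logic are identical.
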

\begin{proof}
   
    The proof is based on bijections among $\mathit{SPP}(m;\lambda_{st})$, $\mathit{PP}_{sym}(m;\overline{\lambda_{st}})$, and the set of lozenge tilings of $A(m;\lambda_{st})$ (this bijection is not new. This can be found, for example, in \cite{HL}). The correspondence is as follows. Given a shifted plane partition in $\mathit{SPP}(m;\lambda_{st})$, reflect the shifted plane partition along the main diagonal and get a symmetric plane partition in $\mathit{PP}_{sym}(m;\overline{\lambda_{st}})$. Note that this correspondence is reversible. More precisely, given a symmetric plane partition in $\mathit{PP}_{sym}(m;\overline{\lambda_{st}})$, get rid of all the entries strictly below the main diagonal and what we get is a shifted plane partition in $\mathit{SPP}(m;\lambda_{st})$. One can readily see that these maps are inverse to each other. Therefore, $\mathit{SPP}(m;\lambda_{st})$ and $\mathit{PP}_{sym}(m;\overline{\lambda_{st}})$ are in bijection (see the two pictures on the top in Figure \ref{fdh}).

\begin{figure}
    \centering
    \includegraphics[width=0.9\textwidth]{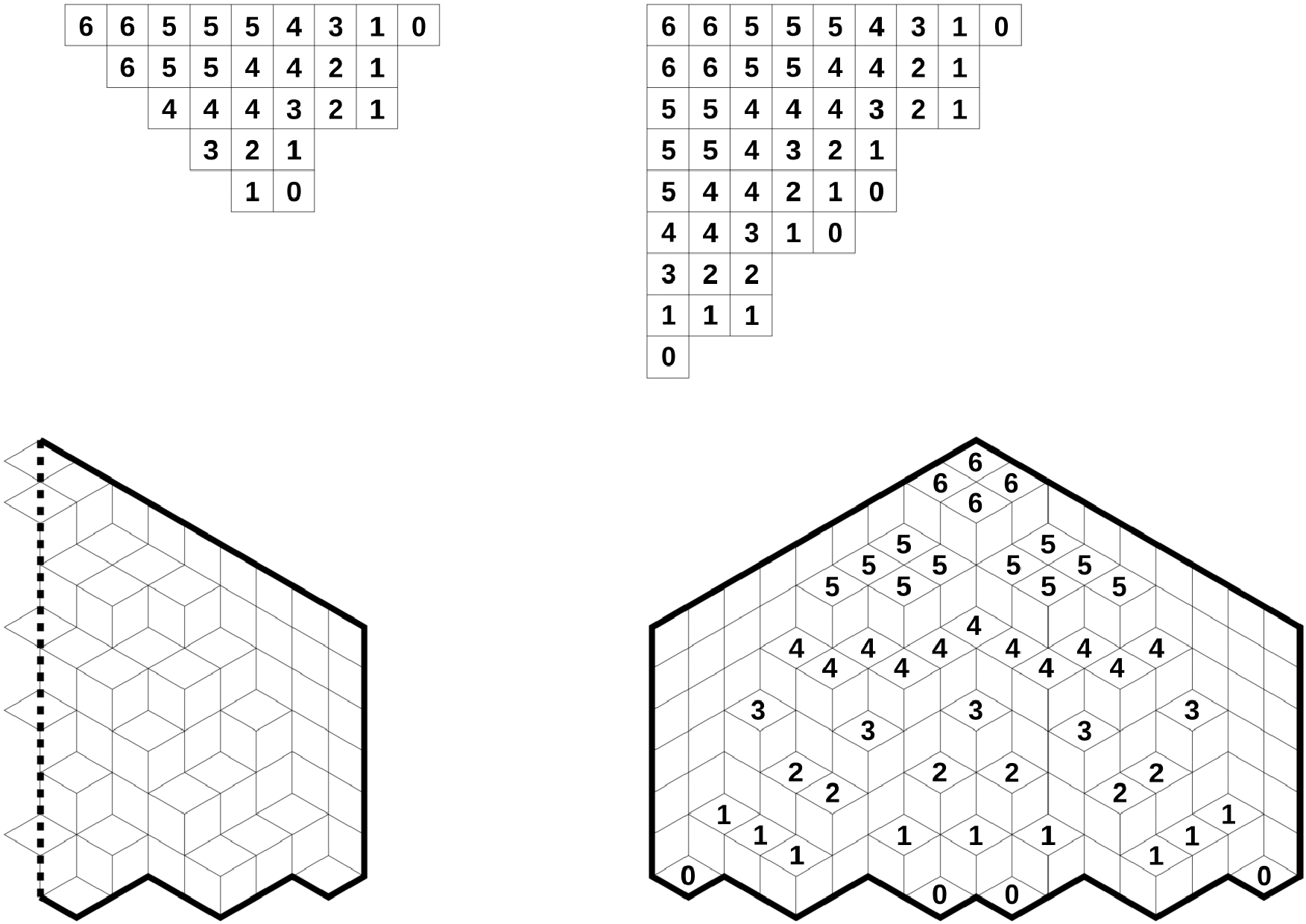}
    \caption{The correspondence among a shifted plane partition of shifted shape in $SPP(m;\lambda_{st})$ (top-left), a symmetric plane partition of symmetric shape in $PP_{sym}(m;\overline{\lambda_{st}})$ (top-right) and its 3-D realization (bottom-right), and a lozenge tilings of the mirror image of $A(m;\lambda_{st})$ (bottom left), where $\lambda_{st}=(9,7,6,3,2)$ and $m=6$.}
    \label{fdh}
\end{figure}

    Now, consider a symmetric plane partition in $\mathit{PP}_{sym}(m;\overline{\lambda_{st}})$. As mentioned earlier, one can realize it as a stack of unit cubes, and using David and Tomei's bijection \cite{DT}, one can then realize it as a vertically symmetric lozenge tiling of a certain region (see the two pictures on the right in Figure \ref{fdh}). Since vertically symmetric lozenge tilings are uniquely determined by lozenges strictly right to the symmetry axis and lozenges crossing the symmetry axis, we discard the lozenges on the left side of the symmetry axis (we keep the lozenges crossing the symmetry axis. See the two picture on the bottom in Figure \ref{fdh}). The remaining lozenges form a lozenge tiling of the mirror image of $A(m;\lambda_{st})$, where its left side is a free boundary. One can readily check that this correspondence is reversible and thus $\mathit{PP}(m;\lambda_{st})$, $\mathit{SPP}(m;\overline{\lambda_{st}})$, and the set of lozenge tilings of $A(m;\lambda_{st})$ are in bijection. Therefore, combining it with Theorem \ref{tca}, we have
    \begin{equation}\label{tdq}
        |\mathit{SPP}(m;\lambda_{st})|^2=|\mathit{PP}_{sym}(m;\lambda_{st})|^2=\M_f(A(m;\lambda_{st}))^2=2^{k}\M(\widetilde{A}(m;\lambda_{st})).
    \end{equation}
This completes the proof.
\end{proof}

The second result gives determinant formulas for the volume generating functions of symmetric plane partitions of a symmetric shape ($|PP_{sym}(m;\overline{\lambda_{st}};q)|$) and the shifted plane partitions of a shifted shape ($|SPP(m;\lambda_{st};q)|$). It uses weight-preserving bijections among shifted plane partitions, lozenge tilings, and families of nonintersecting paths. These bijections were introduced earlier in the proof of Theorem \ref{tca} and Theorem \ref{tdc} without weight on them.

For any nonnegative integer $n$, the \textit{$q$-analogue of $n$} is $[n]_q\coloneqq\frac{1-q^n}{1-q}$ and the \textit{$q$-analogue of $n!$} is $[n]_{q}!\coloneqq\prod_{i=1}^{n}[i]_q!$. Also, for integers $n$ and $k$, the \textit{$q$-analogue of the binomial coefficient $\binom{n}{k}$} is $\begin{bmatrix}
    n\\
    k
\end{bmatrix}_q\coloneqq\frac{[n]_q!}{[k]_q![n-k]_q!}$ if $0\leq k\leq n$ and $0$ otherwise. Note that we cannot replace $q$ by $1$ in $[n]_q$, $[n]_{q}!$, or
$\begin{bmatrix}
    n\\
    k
\end{bmatrix}_q$ because the term $(1-q)$ in the denominator vanishes. However, one can check that if we take the limit $q\rightarrow 1$, then their limits become $n$, $n!$, and $\binom{n}{k}$, respectively. For any shifted plane partition of shifted shape $\lambda_{st}$, we define its \textit{$(q,t)$-weight} as $q^{\sum_{i<j}\pi_{i,j}}t^{\sum_{i}\pi_{i,i}}$. In terms of its 3-D realization as a stack of unit cubes, the exponent of $t$, $\sum_{i}\pi_{i,i}$, is the number of unit cubes on the main diagonal and the exponent of $q$, $\sum_{i<j}\pi_{i,j}$, is the number of unit cubes not on the main diagonal. Using this new weight, we define a \textit{$(q,t)$-generating function of $\mathit{SPP}(m;\lambda_{st})$} as follows: $|\mathit{SPP}(m;\lambda_{st};q,t)|\coloneqq\sum_{\pi\in \mathit{SPP}(m;\lambda_{st})}q^{\sum_{i<j}\pi_{i,j}}t^{\sum_{i}\pi_{i,i}}$. The main result of the following theorem gives a determinant formula for this $(q,t)$-generating function $|\mathit{SPP}(m;\lambda_{st};q,t)|$. Once we prove it, determinant formulas for the volume generating functions of $\mathit{SPP}(m;\lambda_{st})$ and $\mathit{PP}_{sym}(m;\overline{\lambda_{st}})$ can be easily deduced.

\begin{thm}\label{tdd}
    For any positive integer $m$ and a strict partition $\lambda_{st}=(\lambda_1,\ldots,\lambda_k)$, the $(q,t)$-generating function of shifted plane partition of shifted shape $\lambda_{st}$ with largest entry no greater than $m$ satisfies the following determinant formula.
    \begin{equation}\label{tdr}
        |\mathit{SPP}(m;\lambda_{st};q,t)|^2=\det \Big[\M(q,t)U_{m+k}\M(q,t)^T\Big],
    \end{equation}
    Where $\M(q,t)=\Bigg[t^{m+i-j}\begin{bmatrix}
        \lambda_i-1+m+i-j\\
        m+i-j
    \end{bmatrix}_q\Bigg]_{1\leq i\leq k, 1\leq j\leq m+k}$ is a $k\times (m+k)$ matrix and $U_{m+k}=[d_{i,j}]_{1\leq i,j\leq m+k}$ is an upper triangular matrix defined by $d_{i,j}=
    \begin{cases}
    2, & \text{if $i<j$}\\
    1, & \text{if $i=j$}\\    
    0, & \text{if $i>j$}
    \end{cases}$.
    
    In particular, the volume generating functions of $\mathit{SPP}(m;\lambda_{st})$ and $\mathit{PP}_{sym}(m;\lambda_{st})$ satisfy
    \begin{equation}\label{tds}
        |\mathit{SPP}(m;\lambda_{st};q)|^2=\det \Big[\M(q,q)U_{m+k}\M(q,q)^T\Big]
    \end{equation}
    and
    \begin{equation}\label{tdt}
        |\mathit{PP}_{sym}(m;\lambda_{st};q)|^2=\det \Big[\M(q^2,q)U_{m+k}\M(q^2,q)^T\Big],
    \end{equation}
    respectively, where the two matrices $\M(q,q)=\Bigg[q^{m+i-j}\begin{bmatrix}
        \lambda_i-1+m+i-j\\
        m+i-j
    \end{bmatrix}_q\Bigg]_{1\leq i\leq k, 1\leq j\leq m+k}$ and $\M(q^2,q)=\Bigg[q^{m+i-j}\begin{bmatrix}
        \lambda_i-1+m+i-j\\
        m+i-j
    \end{bmatrix}_{q^2}\Bigg]_{1\leq i\leq k, 1\leq j\leq m+k}$ are the specializations of $\M(q,t)$ and $U_{m+k}$ is the same matrix as in \eqref{tdr}.
\end{thm}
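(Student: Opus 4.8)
The plan is to upgrade the unweighted bijection between $\mathit{SPP}(m;\lambda_{st})$ and families of nonintersecting lattice paths---already constructed in the proofs of Theorems \ref{tca} and \ref{tdc}---to a \emph{weighted} bijection that carries the $(q,t)$-statistic, and then feed the resulting weighted path matrix into Theorem \ref{tac}. Concretely, I would keep the same lattice built from $A(m;\lambda_{st})$, with the $\mathbb{Z}^2$-realization $u_i=(-\lambda_i+1,k-i)$ and $v_j=(0,m+k-j)$ of the starting and ending points, but now assign weight $t$ to every vertical (up) edge and weight $q^{c}$ to every horizontal (right) edge, where the exponent $c$ is the height of that edge measured from the starting row of the path. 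With this choice the generating function of a single path from $u_i$ to $v_j$ is the weighted count of monotone paths with $\lambda_i-1$ right-steps and $m+i-j$ up-steps, which is exactly $t^{m+i-j}\begin{bmatrix}\lambda_i-1+m+i-j\\m+i-j\end{bmatrix}_q$ by the standard $q$-binomial evaluation; hence the weighted path matrix $\M(\mathbf{u},\mathbf{v})$ equals $\M(q,t)$.

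Next I would verify that this edge-weighting is compatible with the bijection, that is, that the weight of a path family $(P_1,\dots,P_k)$ equals the $(q,t)$-weight $q^{\sum_{i<j}\pi_{i,j}}t^{\sum_i\pi_{i,i}}$ of the associated shifted plane partition $\pi$. The key geometric facts are that, under the correspondence, the (free-boundary) endpoint of path $i$ records the diagonal entry $\pi_{i,i}$---so that the total number of up-steps $\sum_i(m+i-j_i)$ equals $\sum_i\pi_{i,i}$, giving total $t$-exponent $\sum_i\pi_{i,i}$---while the area statistic recorded by path $i$ is the off-diagonal row sum $\sum_{j>i}\pi_{i,j}$, giving total $q$-exponent $\sum_{i<j}\pi_{i,j}$. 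Granting this, the weighted bijection yields $\GF[\mathscr{P}_0(\mathbf{u},\mathbf{v})]=|\mathit{SPP}(m;\lambda_{st};q,t)|$, and since $\mathbf{u}$ and $\mathbf{v}$ are compatible, Theorem \ref{tac} (applied with $k$ starting points and $m+k$ ending points) gives
\[
    |\mathit{SPP}(m;\lambda_{st};q,t)|^2=\GF[\mathscr{P}_0(\mathbf{u},\mathbf{v})]^2=\det\big[\M(q,t)U_{m+k}\M(q,t)^T\big],
\]
which is exactly \eqref{tdr}.

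Finally I would deduce the two specializations. Setting $t=q$ collapses the weight to $q^{\sum_{i\le j}\pi_{i,j}}=q^{|\pi|}$, so $|\mathit{SPP}(m;\lambda_{st};q,q)|=|\mathit{SPP}(m;\lambda_{st};q)|$ and \eqref{tdr} specializes to \eqref{tds}. For \eqref{tdt} I would invoke the reflection bijection of Theorem \ref{tdc} between $\mathit{SPP}(m;\lambda_{st})$ and $\mathit{PP}_{sym}(m;\overline{\lambda_{st}})$: reflecting across the main diagonal counts each diagonal entry $\pi_{i,i}$ once but each off-diagonal entry $\pi_{i,j}$ with $i<j$ twice, so the volume of the symmetric plane partition is $\sum_i\pi_{i,i}+2\sum_{i<j}\pi_{i,j}$. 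Hence $|\mathit{PP}_{sym}(m;\overline{\lambda_{st}};q)|=|\mathit{SPP}(m;\lambda_{st};q^2,q)|$, and substituting $(q,t)=(q^2,q)$ into \eqref{tdr} produces \eqref{tdt}.

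The main obstacle is the weight-preservation check in the second step: although the unweighted bijection is already in hand, one must confirm that the edge-weights chosen to reproduce the entries of $\M(q,t)$ simultaneously split the volume statistic correctly, tracking the diagonal entries through the free-boundary endpoints (the $t$-exponent) and the off-diagonal entries through the path areas (the $q$-exponent). Once the $q$- and $t$-exponents are matched, the remainder is either a routine $q$-binomial computation or a direct application of Theorem \ref{tac}, so I expect no further difficulty.
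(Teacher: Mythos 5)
Your proposal follows essentially the same route as the paper's proof: upgrade the unweighted bijections from Theorems \ref{tca} and \ref{tdc} to a weighted bijection carrying the $(q,t)$-statistic, identify the weighted path matrix with $\M(q,t)$, apply Theorem \ref{tac} with $k$ starting and $m+k$ ending points, and deduce \eqref{tds} and \eqref{tdt} by the specializations $(q,t)=(q,q)$ and $(q,t)=(q^2,q)$. The identification of the statistics ($t$-exponent $\sum_i\pi_{i,i}$ through the number of up-steps, $q$-exponent $\sum_{i<j}\pi_{i,j}$ through an area statistic) and both specializations match the paper exactly.

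The one point you must repair is your weighting. You assign to a horizontal edge the weight $q^{c}$ where $c$ is ``the height of that edge measured from the starting row of the path.'' That is not an edge weight on the graph: the same horizontal edge would receive different weights depending on which path traverses it (the starting heights $k-i$ vary with $i$), whereas Theorem \ref{tac} --- like the Lindstr\"om--Gessel--Viennot theorem underlying it --- requires a fixed weight on each edge, so that $\M(\mathbf{u},\mathbf{v})=\big[\GF[\mathscr{P}(u_i,v_j)]\big]$ is the path matrix of a single weighted graph and the sign-cancelling tail-swap argument is weight-preserving. Two easy fixes are available. The paper's choice is to put the entire statistic on the vertical edges: weight each vertical edge on the line $x=a$ by $q^{a}t$ and every horizontal edge by $1$; since the sum of the $x$-coordinates of the up-steps of a path equals the sum, over its horizontal steps, of the number of up-steps preceding them, this genuine edge weighting assigns every individual path exactly the weight you intended and yields the same matrix $\M(q,t)$. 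Alternatively, keep the weight on horizontal edges but use the absolute height, $q^{y}$; this multiplies row $i$ of the path matrix and the weight of the $i$-th path by the same factor $q^{(\lambda_i-1)(k-i)}$, and these factors cancel between the two sides of \eqref{tdr}. With either repair the rest of your argument goes through; the weight-preservation check you defer is carried out in the paper by slicing the stack of unit cubes into levels and noting that each nonempty row of each level contributes exactly one diagonal cube (giving $t$) and row-length-minus-one off-diagonal cubes (giving the power of $q$).
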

\begin{proof}
We first recall the two bijections presented in the proof of Theorems \ref{tca} and \ref{tdc}. We have a bijection between $\mathit{SPP}(m;\lambda_{st})$ and the set of lozenge tilings of the mirror image of $A(m;\lambda_{st})$, and this was explained in the proof of Theorem \ref{tdc} (see the left two pictures in Figure \ref{fdi}. At this point, ignore the labels in the pictures). We then have a bijection between the set of lozenge tilings of $A(m;\lambda_{st})$ and the set of families of $k$ nonintersecting paths on $\mathbb{Z}^2$, which was presented in the proof of Theorem \ref{tca} (see the right two pictures in Figure \ref{fdi}. The pictures were reflected, as we want to compose this correspondence with the one involving $\mathit{SPP}(m;\lambda_{st})$. Again, ignore the labels). Combining these two bijections, we obtain a bijection between $\mathit{SPP}(m;\lambda_{st})$ and the set of families of $k$ nonintersecting paths on $\mathbb{Z}^2$ with prescribed starting and ending points: the starting points are $u_i=(\lambda_i-1, k-i)$ for $i\in[k]$ and the ending points are $v_j=(0,m+k-j)$ for $j\in[m+k]$ and all edges are oriented toward the top and left. We now assign weight to unit segments in $\mathbb{Z}^2$ lattice as follows, so that the $(q,t)$-weight of the shifted plane partition is encoded in the nonintersecting paths:
\begin{itemize}
    \item Every horizontal unit segment is weighted by $1$. 
    \item Every vertical unit segment on $x=a$ is weighted by $q^{a}t$.
\end{itemize}
To see why this weight makes the bijection weight-preserving, we consider a shifted plane partition. Given a 3-D realization of the shifted plane partition as a stack of unit cubes, note that each level of the stack forms a shifted Young diagram. We read the size of each row and then mark the monomial $q^{\textbf{(the size of the row)}-1}t$ at the right side of the rightmost unit cube on each row (see the left picture in Figure \ref{fdi}). Since each row contain exactly one unit cube that is on the main diagonal (namely, the leftmost one), it is clear that the product of these monomials gives $q^{\sum_{i<j}\pi_{i,j}}t^{\sum_{i}\pi_{i,i}}$, the $(q,t)$-weight of the shifted plane partition. One can then readily see that the weight of the corresponding family of $k$-tuples of lattice paths on $\mathbb{Z}^2$ with the aforementioned weights on its edges is the same, and thus the bijection is weight-preserving (see Figure \ref{fdi}). Therefore, our task is reduced to finding a generating function for the families of $k$-tuples of nonintersecting paths on $\mathbb{Z}^2$ starting at $(u_i)_{i\in[k]}$ and ending at $(v_{j})_{j\in[m+k]}$.

\begin{figure}
    \centering
    \includegraphics[width=1.0\textwidth]{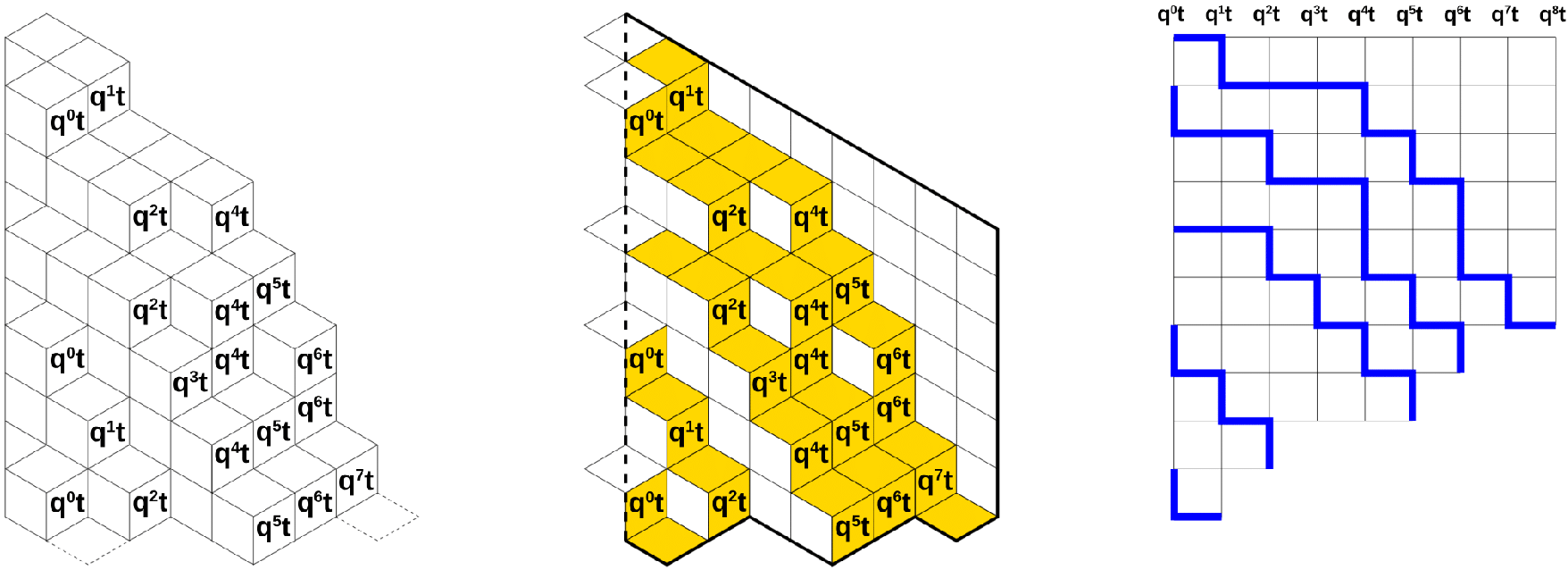}
    \caption{A 3-D realization of a shifted plane partition in $SPP(m;\lambda_{st})$ is on the left, where $m=6$ and $\lambda_{st}=(9,7,6,3,2)$, and the monomials $q^{\textbf{(the size of the row)}-1}t$ are marked in each row. In the middle picture, the corresponding lozenge tiling of the mirror image of $A(m,\lambda_{st})$ is presented. In that picture, the weights of the lozenges are marked (the ones with no mark are weighted by $1$). The picture on the right presents the corresponding family of nonintersecting paths on $\mathbb{Z}^2$. The vertical unit segments below the label $q^kt$ are weighted by $q^kt$, and all horizontal unit segments are weighted by $1$.}
    \label{fdi}
\end{figure}

To apply Theorem \ref{tca}, we need to find a generating function for lattice paths starting at $u_i$ and ending at $v_j$: this will be the $(i,j)$-entry of the matrix $\M(q,t)$. To find it, we need to show the following claim.

\textbf{Claim.} Consider a lattice $\mathbb{Z}^2$ where all the edges are oriented toward the top and left and weighted as described above. For two points $(a,b), (c,d)\in\mathbb{Z}^2$,
\begin{equation}\label{tdu}
    \GF[\mathscr{P}((a,b),(c,d))]=
    \begin{cases}
        q^{c(d-b)}t^{(d-b)}\begin{bmatrix}
            (a-c)+(d-b)\\
            (d-b)
        \end{bmatrix}_q&\text{if $c\leq a$ and $b\leq d$}\\
        0, &\text{otherwise}
    \end{cases}.
\end{equation}
The above claim can be proved as follows.

\begin{itemize}
    \item When $c>a$ or $b>d$, then there is no lattice path connecting $(a,b)$ and $(c,d)$, so the generating function is $0$.
    \item When $c\leq a$ and $b=d$, then there is only one lattice path that consists of horizontal edges. This lattice path has weight $1$ and it agrees with \eqref{tdu}.
    \item When $a=c$ and $b\leq d$, then there is again only one lattice that consists of $(d-b)$ vertical edges. Since each edge has weight $q^{a}t$, the lattice path has weight $q^{a(d-b)}t^{(d-b)}$ and it agrees with \eqref{tdu}.
    \item When $c<a$ and $b<d$, we can prove the claim using an induction on $(a-c)+(d-b)$. To proceed with the induction, one can use the recurrence
    \begin{equation}\label{tdv}
        \GF[\mathscr{P}((a,b),(c,d))]=q^{a}t\GF[\mathscr{P}((a,b+1),(c,d))]+\GF[\mathscr{P}((a-1,b),(c,d))],
    \end{equation}
    which can be obtained from the fact that the lattice path from $(a,b)$ to $(c,d)$ can be partitioned according to the first step: if the first step is vertical (or horizontal), the edge has weight $q^{a}t$ (or $1$) and the remaining steps form a lattice path joining $(a,b+1)$ and $(c,d)$ (or $(a-1,b)$ and $(c,d)$). Since the formula in \eqref{tdu} satisfies the recurrence \eqref{tdv} and $(a-c)+(d-b)>(a-c)+(d-(b+1))=((a-1)-c)+(d-b)$, \eqref{tdu} can be proved by induction. We leave the details of the verification of the induction step to the readers.
\end{itemize} 
Therefore, the generating function of lattice paths from $u_i=(\lambda_i-1,k-i)$ to $v_j=(0,m+k-j)$ is $t^{m+i-j}\begin{bmatrix}
        \lambda_i-1+m+i-j\\
        m+i-j
    \end{bmatrix}_q$
and thus applying Theorem \ref{tca}, we can conclude that the $(q,t)$-generating function of $SPP(m;\lambda_{st})$ is given by $\det\Big[\M(q,t)U_{m+k}\M(q,t)^T\Big]$ where
$\M(q,t)=\Bigg[t^{m+i-j}\begin{bmatrix}
        \lambda_i-1+m+i-j\\
        m+i-j
    \end{bmatrix}_q\Bigg]$
is a $k\times (m+k)$ matrix and $U_{m+k}$ is an upper triangular matrix defined in the statement of Theorem \ref{tdd}. This completes the proof of \eqref{tdr}. 

To prove \eqref{tds} and \eqref{tdt}, recall that
\begin{equation}\label{tdw}
    |\mathit{SPP}(m;\lambda_{st};q,t)|\coloneqq\sum_{\pi\in \mathit{SPP}(m;\lambda_{st})}q^{\sum_{i<j}\pi_{i,j}}t^{\sum_{i}\pi_{i,i}}.
\end{equation}
Note that the volume generating function of $\mathit{SPP}(m;\lambda_{st})$ can be written as follows.
\begin{equation}\label{tdx}
    |\mathit{SPP}(m;\lambda_{st};q)|=\sum_{\pi\in \mathit{SPP}(m;\lambda_{st})}q^{\sum_{i\leq j}\pi_{i,j}}=\sum_{\pi\in \mathit{SPP}(m;\lambda_{st})}q^{\sum_{i<j}\pi_{i,j}}q^{\sum_{i}\pi_{i,i}}.
\end{equation}
Similarly, using the fact that $\pi_{i,j}=\pi_{j,i}$ for all $\pi\in PP_{sym}(m,\overline{\lambda_{st}})$, the volume generating function of $PP_{sym}(m,\overline{\lambda_{st}})$ can be expressed as follows.

\begin{equation}\label{tdy}
\begin{aligned}
    |PP_{sym}(m,\overline{\lambda_{st}};q)|=\sum_{\pi\in PP_{sym}(m,\overline{\lambda_{st}})}q^{\sum_{i,j}\pi_{i,j}}=&\sum_{\pi\in PP_{sym}(m,\overline{\lambda_{st}})}q^{\sum_{i>j}\pi_{i,j}}q^{\sum_{i=j}\pi_{i,j}}q^{\sum_{i<j}\pi_{i,j}}\\
    =&\sum_{\pi\in PP_{sym}(m,\overline{\lambda_{st}})}q^{2\sum_{i<j}\pi_{i,j}}q^{\sum_{i=j}\pi_{i,j}}\\
    =&\sum_{\pi\in SPP(m,\lambda_{st})}q^{2\sum_{i<j}\pi_{i,j}}q^{\sum_{i=j}\pi_{i,j}},
\end{aligned}
\end{equation}
where in the last equation, we use the bijection between $PP_{sym}(m,\overline{\lambda_{st}})$ and $SPP(m,\lambda_{st})$ introduced earlier. One can compare \eqref{tdw}, \eqref{tdx}, and \eqref{tdy} and notice that the volume generating functions of $\mathit{SPP}(m;\lambda_{st})$ and $PP_{sym}(m,\overline{\lambda_{st}})$ can be obtained from the $(q,t)$-generating function of $\mathit{SPP}(m;\lambda_{st})$ by setting $(q,t)=(q,q)$ and $(q,t)=(q^2,q)$, respectively. Therefore, \eqref{tds} and \eqref{tdt} follow for \eqref{tdr} by setting $(q,t)=(q,q)$ and $(q,t)=(q^2,q)$ in \eqref{tdr}, respectively. This completes the proof.
\end{proof}

\bibliography{bibliography}{}
\bibliographystyle{abbrv}
  
\end{document}